\newtheorem{theorem}{Theorem}[section]
\newtheorem{fact}[theorem]{Fact}
\newtheorem{corollary}[theorem]{Corollary}
\newtheorem{definition}[theorem]{Definition}
\newtheorem{lemma}[theorem]{Lemma}
\newtheorem{proposition}[theorem]{Proposition}
\newtheorem{remark}[theorem]{Remark}
\numberwithin{equation}{section}
\def\Xint#1{\mathchoice
{\XXint\displaystyle\textstyle{#1}}%
{\XXint\textstyle\scriptstyle{#1}}%
{\XXint\scriptstyle\scriptscriptstyle{#1}}%
{\XXint\scriptscriptstyle\scriptscriptstyle{#1}}%
\!\int}
\def\XXint#1#2#3{{\setbox0=\hbox{$#1{#2#3}{\int}$}
\vcenter{\hbox{$#2#3$}}\kern-.5\wd0}}
\def\dashint{\Xint-}
\newcommand{\R}{\mathbb{R}}
\newcommand{\Q}{\mathbb{Q}}
\newcommand{\N}{\mathbb{N}}
\newcommand{\M}{\mathcal{M}}
\newcommand{\Ha}{\mathcal{H}}
\newcommand{\Ne}[2]{N^{1,#1}(#2)}
\newcommand{\Nem}[3]{N^{1,#1}(#2;#3)}
\newcommand{\Wem}[3]{W^{1,#1}(#2;#3)}
\newcommand{\Neml}[3]{N^{1,#1}_{loc}(#2;#3)}
\newcommand{\Di}[3]{D^{1,#1}(#2;#3)}
\newcommand{\Dir}[2]{D^{1,#1}(#2)}
\newcommand{\diaco}[1]{\widehat{#1}_{\operatorname{diag}}}
\newcommand{\Mod}{\operatorname{Mod}}
\newcommand{\dist}{\operatorname{dist}}
\newcommand{\diam}{\operatorname{diam}}
\newcommand{\Lip}{\operatorname{Lip}}
\newcommand{\LIP}{\operatorname{LIP}}
\newcommand{\ud}{\mathrm {d}}
\newcommand{\Cp}{\operatorname{Cap}}
\begin{document}
\title{Homotopy classes of Newtonian spaces}
\thanks{The author was supported by the Academy of Finland, project no. 1252293, and the V\"ais\"al\"a Foundation.\newline 
\noindent \emph{Email}: elefterios.soultanis@helsinki.fi}
\author{Elefterios Soultanis}
\date{\today}

\address{P.O. Box 68 (Gustaf H\"allstr\"omin katu 2b),\newline
\indent FI-00014 University of Helsinki}
\email{elefterios.soultanis@helsinki.fi}%
\urladdr{http://wiki.helsinki.fi/display/mathstatHenkilokunta/Soultanis\%2C+Elefterios}
\keywords{Function spaces, Metric measure spaces, Poincar\'e inequality, Homotopy}%

\begin{abstract}
We study notions of homotopy in the Newtonian space $\Nem pXY$ of Sobolev type maps between metric spaces. After studying the properties and relations of two different notions we prove a compactness result for sequences in homotopy classes with controlled homotopies.
\end{abstract}

\maketitle
\tableofcontents

\section{Introduction}

Interest in homotopy classes of mappings and energy minimizers arises naturally both in the theory of PDE's -- where certain energy minimizers in homotopy classes provide natural examples of non-uniqueness of some systems of partial differential equations (see \cite{bre03}) -- and in the study of the geometry of manifolds. Minimizing some energy in a given homotopy class provides one with a well-behaved representative of that class. Topological conclusions from the study of harmonic maps in given homotopy classes were drawn, for instance, by R. Schoen and S.T. Yau in \cite{sch76, sch79, sch97}. For $p$-harmonic maps (with $1<p<\infty$), connections to higher homotopy groups, as well as to homotopy classes of maps arise, see e.g. \cite{wei98, pig09, ver12}.


From early on in the work of various authors, such as Eells and Sampson \cite{eel64}, it has been noted that certain methods of obtaining existence results for harmonic maps in homotopy classes are restricted to the setting of non-positively curved target manifolds (see the survey article \cite{eel78} for further discussion). Some results, such as in the papers \cite{bur84, whi88} of Burstall and White, have been obtained for the existence, regularity (and, more rarely, uniqueness) of harmonic and $p$-harmonic maps between general Riemannian manifolds, with varying assumptions. More recently $p$-harmonic maps between general Riemannian manifolds have been studied in \cite{pig09, hol11, ver12} to mention but a few.

Towards a nonsmooth theory the assumption of (some sort of) nonpositive curvature on the target space seems to become compulsory. Starting with Gromov's and Schoen's work \cite{gro92}, continued in \cite{kor93} a theory of harmonic maps from a Riemannian manifold (or Riemannian polyhedron in \cite{eel01}) to a nonpositively curved metric space (in the sense of Alexandrov, see Section 1.2 below) was built. Jost, in a series of papers \cite{jost94, jost96, jost97} studied harmonic maps from metric spaces with a doubling measure and a Poincar\'e inequality to metric spaces of nonpositive curvature. This setting is closest to ours; with basically the same assumptions we proceed to define and study homotopy classes using tools coming from analysis in metric spaces (more of which in Section 1.1 below).

\subsection{Main results}
The present paper may be divided into two parts. In the first we focus mainly on general properties of homotopies in the setting Sobolev type maps between metric spaces. The second part is concerned with compactness properties and existence of energy minimizers in homotopy classes. A new approach for proving these in the metric setting is proposed but we are unable to complete it.

We work in the setting of metric spaces; the domain $(X,d,\mu)$ is always assumed to be a complete metric space with a doubling measure supporting a weak $(1,p)$-Poincar\'e inequality (see Definition \ref{Poincare}), and the target $(Y,d)$ a complete separable geodesic space.

In the second part we will assume, in addition, that the target is a locally convex space (see Definition \ref{locconv}).

\subsubsection*{First part: Sections 1-4.} More precisely, in the first part, we work in the framework of the Dirichlet classes $\Di pXY$ of maps between metric spaces, with $p\in (1,\infty)$. We introduce two distinct topologies on $\Di pXY$, and two notions of homotopy for maps in $\Di pXY$. The precise definitions are given in Subsection 1.2 (Definitions \ref{standard} and \ref{ohtatop}) and in Subsection 2.1 (Definitions \ref{phomotopy} and \ref{path}), respectively, but we briefly explain the main idea of the definitions here.

\bigskip\noindent\emph{Topologies.}The standard topology refers to the topology on $\Di pXY$ induced by the seminorms \[ \|u\|_{\Di p{\Omega}{\ell^\infty(Y)}}:=\left(\int_\Omega\|u\|^p\ud\mu+\int_Xg_u^p\ud\mu\right)^{1/p} \] where $\Omega$ ranges over the domains of $X$ with compact closure. The \emph{Ohta topology} on $\Di pXY$ is in turn induced by the family of pseudometrics \[d_\Omega(u,v)=\int_\Omega d_Y(u,v)\ud\mu+\left(\int_X|g_u-g_v|^p\ud\mu\right)^{1/p}, \] $\Omega$ ranging again over domains of $X$ with compact closure.

In both cases, if $X$ is compact, the above expressions with $\Omega=X$ yield a metric which generates the topologies.

\bigskip\noindent\emph{Homotopies.} Two maps in the Dirichlet class $\Di pXY$ are said to be \emph{path homotopic} if they can be connected with a continuous path in $\Di pXY$, while we say they are \emph{$p$-quasihomotopic} if they are homotopic outside sets of arbitrarily small $p$-capacity. The two different topologies on $\Di pXY$ give rise to two distinct notions of path homotopy. See Definitions \ref{phomotopy} and \ref{path}.

\bigskip\noindent The first part of the paper is largely devoted to the relationship between the two notions of homotopy. We summarise the main findings below.

\begin{theorem}\label{ppath}
$(X,d,\mu)$ is a complete space with a doubling measure $\mu$ supporting a weak $(1,p)$-Poincar\'e inequality, and $Y$ a separable complete locally convex metric space. If two maps $u,v\in\Di pXY$ are $p$-quasihomotopic they may be connected by continuous path in the Ohta topology of $\Di pXY$.
\end{theorem}
\begin{theorem}\label{reimpliesp}
Suppose $(X,d,\mu)$ is a complete space with a doubling measure $\mu$ supporting a weak $(1,p)$-Poincar\'e inequality, and $Y$ a separable complete geodesic space. Let $u,v\in \Di pXY$ and $h:[0,1]\to \Di pXY$ be a map connecting $u$ and $v$ (i.e. $h(0)=u, h(1)=v$). Suppose that there exists a constant $C$ and, for every compact $K\subset X$ some $C_K\in (0,\infty)$ so that \[ \|g_{d_Y(h_t,h_s)}\|_{L^p(X)}\le C|t-s|\textrm{ and }\int_Kd_Y(h_t,h_s)\ud\mu \le C_K|t-s| \] for all $t,s\in [0,1]$. Then $u$ and $v$ are $p$-quasihomotopic.
\end{theorem}
In particular we have the following corollary.
\begin{corollary}\label{rect}
Let $X$ and $Y$ be as in Theorem \ref{reimpliesp} and assume in addition that $X$ is compact. Suppose that $u,v\in \Nem pXY$ can be joined by a rectifiable curve in $\Nem pXY$ in the standard metric. Then $u$ and $v$ are $p$-quasihomotopic.
\end{corollary}
For Riemannian manifolds we have the following result.
\begin{theorem}\label{pquasi}
Let $M,N$ be smooth compact Riemannian manifolds. If two maps $u,v\in W^{1,p}(M;N)$ are $p$-quasihomotopic then they are path-homotopic.
\end{theorem}
The converse does not necessarily hold, as is demonstrated by an example in Section 4.

Concerning the different topologies we have the following result.
\begin{proposition}\label{ohta}
Let $M,N$ be smooth compact Riemannian manifolds. Then $\Di pMN=\Wem pMN$ and the standard and Ohta topologies on $\Wem pMN$ agree.
\end{proposition}

\subsubsection*{Second part: Sections 5-6.} The initial main goal in this paper was to prove the stability of $p$-quasihomotopy classes under $L^p$-convergence, in the spirit of \cite{whi88}. Such a result would imply existence of energy-minimizing maps in a given homotopy class.

Indeed, the latter part of the paper develops a new approach for establishing the stability result. Existence of energy minimizers in homotopy classes has been studied in \cite{whi88, kor93, fug08, fug08'} as well as in \cite{bre01, han03} but the methods in these papers are specific to the manifold setting. In particular the works \cite{fug08,fug08'} fill in a gap in the proofs of existence of energy minimizers in \cite{eel01}.

Our approach is based on the following characterization of $p$-quasihomotopy in terms of lifts to the diagonal cover (see Section 4 for the definition of the diagonal cover).
\begin{theorem}\label{lifting}
Let $(X,d,\mu)$ be a complete space with a doubling measure $\mu$ supporting a weak $(1,p)$-Poincar\'e inequality, and suppose $Y$ is a separable complete locally convex space. Two maps $u,v\in \Di pXY$ are $p$-quasihomotopic if and only if the product map $(u,v)\in \Di pX{Y\times Y}$ admits a lift $h\in \Di pX{\diaco Y}$ with respect to the diagonal covering map  $\phi:\diaco Y\to Y\times Y$.
\end{theorem}
\noindent See also \cite[Theorem 1.2]{teri2}.

\bigskip\noindent To describe our approach fix a map $v\in \Di pXY$, with both $X$ and $Y$ compact. Denote the set of maps $u\in \Di pXY$ $p$-quasihomotopic to $v$ by $[v]$. The covering $\phi=:(\phi_0,\phi_1):\diaco Y\to Y\times Y$ induces a map $\overline\phi:\Di pX{\diaco Y}\to \Di pXY$ given by $$\overline\phi(h)=\phi_1\circ h.$$ By theorem \ref{lifting} $u\in [v]$ if and only there is a map $h\in \Di pX{\diaco Y}$ such that $\phi\circ h=(u,v)$. Therefore $\phi$ restricts to a map $\overline\phi:H^v\to [v]$ where $$H^v=\{ h\in \Di pX{\diaco Y}: \phi_0\circ h=v \}.$$

The strategy is to view the restricted map as a covering map and $H^u$ as a covering space. Under the appropriate technical assumptions the stability result would follow from the fact that $H^v$ is a proper metric space (proven in section 5), $[v]$ is known to be precompact (the Rellich Kondrakov theorem) and $\overline\phi$ is a covering map. \emph{However, I have been unable to prove this last part,} and this inability comes from a lack of knowledge concerning the metric geometry of the space $\Nem pXY$.

The problem, which is essentially the existence of a convergent subsequence of lifts, stems from the same difficulty that is present in \cite{eel01} (corrected in \cite{fug08, fug08'}). The numerous details of this (ultimately failed) attempt are presented in Sections 5 and 6.


\bigskip\noindent It nevertheless seemed reasonable to communicate the partial results obtained along the way, in hope of encouraging future research for a better understanding of the metric properties of Newtonian classes of maps and for the existence of minimizers of a suitable energy in homotopy classes in this general setting.

\vspace{0.5cm}
\subsection*{Outline} The paper is organized as follows.

\subsubsection*{Section 1} In the first and second subsections on the introduction, relevant facts on analysis on metric spaces are presented. Subsection 1.2 contains the definitions of the Newtonian and Dirichlet classes and the standard and Ohta topologies. Poincar\'e inequalities and $p$-quasicontinuity are presented in Subsection 1.3. The fourth subsection serves as a brief review of the basics of nonpositively curved spaces. Both the definition of Alexandrov and that of Busemann are presented and briefly discussed.


\subsubsection*{Section 2} In the second section we focus on two different notions of homotopy, the definitions of each being given in Subsection 2.1. Some properties of each are exhibited and the relationship between the different notions is studied. Theorem \ref{ppath} follows immediately from Theorems \ref{geodhom} and \ref{path}, as explained in Remark \ref{rem} in Subsection 2.2. The proof of Theorem \ref{reimpliesp} is given in Subsection 2.3.

\subsubsection*{Section 3} The third section is devoted to ''lifts'' of homotopies. The construction and some properties of the diagonal covering map $\phi:\diaco Y\to Y\times Y$ is given in Subsection 3.1. Subsection 3.2 contains the proof of Theorem \ref{lifting} in two parts, Propositions \ref{target} and \ref{converse}, see Remark \ref{repath}.

\subsubsection*{Section 4} The fourth section covers the manifold case, recalling the necessary concepts of $([p]-1)$-homotopy and some useful lemmas. The proof of Theorem \ref{pquasi} is presented in Subsection 4.2, as well as a counterexample to the converse statement of Theorem \ref{pquasi}, and the brief proof of Proposition \ref{ohta}.

\subsubsection*{Sections 5 and 6} The fifth and sixth sections form the second part of the paper, concerned with the stability of $p$-quasihomotopy classes under $L^p$-convergence.In the fifth section the details of the approach to the stability of $p$-quasihomotopy classes are explained, while the sixth section presents a weak compactness result and discusses some open problems and possible future research directions.


\subsubsection*{Notation and convention} Throughout this paper, the notation $$f_A=\dashint_A f\ud\mu := \frac{1}{\mu(A)}\int_A f\ud\mu$$ will be used for the \emph{average} of a locally $\mu$-integrable function $f$ over the $\mu$-measurable set $A$, with positive measure. The centered maximal function is denoted by $$ \M_R f(x):=\sup_{0<r<R}\dashint_{B(x,r)}f\ud\mu.$$ For a number $\sigma>0$, the \emph{dilated ball} $\sigma B$ of a (open or closed) ball $B=B(x,r)$ is $$\sigma B = B(x,\sigma r).$$

\bigskip\noindent The \emph{length} of a path $\gamma$ joining two points $x,y\in Z$ in a metric space is the following: \[ \ell(\gamma)= \sup\{\sum_{k=1}^nd_Z(\gamma(a_k),\gamma(a_{k-1})): a=a_0<a_1<\ldots <a_n=b  \}. \] In general, this quantity may be infinite. Paths $\gamma$ for which $\ell(\gamma)<\infty$ are called rectifiable. A rectifiable path $\gamma$ can always be \emph{affinely reparametrized} so that $\gamma: [0,1]\to Y$ and $d(\gamma(t),\gamma(s))\le \ell(\gamma|_{[t,s]})=\ell(\gamma)|t-s|$ for all $t,s\in [0,1],\ t<s$; see \cite[Proposition 2.2.9]{pap05}. We will call this the constant speed parametrization of a rectifiable path $\gamma$.

If not otherwise stated, we will always regard rectifiable curves $\gamma$ in a metric space $Z$ as being maps $\gamma:[0,1]\to Z$.

\subsection{Upper gradients, Newtonian and Dirichlet classes of maps}


\bigskip\noindent A metric measure space is a locally compact metric space $(X,d)$ equipped with a Borel regular measure $\mu$ with the property that $0<\mu(B)<\infty$ for all open balls $B\subset X$. 

We say that the metric measure space is \emph{doubling} if the measure is doubling, i.e. there is a constant $0<C<\infty$ such that \[ \mu(B(x,2r))\le C\mu(B(x,r)) \] for all balls $B(x,r)\subset X$ with $r<\diam X$. Note the difference to saying that a \emph{metric space} is doubling, which means that for some fixed number $N$, any ball can be covered with at most $N$ balls of half the radius. Note that  these are distinct notions; for instance $\Q$ is a doubling metric space but does not support a doubling measure. For details on the relationship of the two notions, see \cite{haj03, hei01}.


Let $u:X\to Y$ be a map between metric spaces. A non-negative Borel function $g:X\to [0,\infty]$ is said to be an upper gradient of $u$ if, for every rectifiable curve $\gamma$ with endpoints $x$ and $y$ we have the inequality 
\begin{equation}\label{ug}
d_Y(u(x),u(y))\le \int_\gamma g \ud s.
\end{equation}

The \emph{$p$-modulus of a path family} $\Gamma$ is defined as \[ \Mod_p(\Gamma)=\inf\left\{ \int_X\rho^p\ud\mu :\rho\ge 0\textrm{ Borel and }\int_\gamma\rho\ge 1 \ \forall \gamma\in\Gamma \right\}. \]
A family $\Gamma$ of rectifiable curves $\gamma:[a,b]\to X$ has zero $p$-modulus if there exists a non-negative Borel function $h\in L^p(\mu)$ so that $$ \int_\gamma h=\infty \quad \forall \gamma\in\Gamma.$$ See \cite{haj03, hei00, hei01} for the definition of the path-integral.

If $u:X\to Y$ is a map and $g:X\to [0,\infty]$ a Borel function so that (\ref{ug}) is satisfied for all curves \emph{except a curve family that has zero $p$-modulus} we say that $g$ is a $p$-weak upper gradient of $u$.

\bigskip\noindent Upper gradients and their $p$-weak counterparts enable us to define a concept of $p$-capacity of subsets of $X$, analogously with the classical $p$-capacities. Let $(X,d,\mu)$ be a metric measure space, $E\subset X$ a subset and $p\ge 1$. The Sobolev $p$-\emph{capacity} of the set $E$ is defined by \[\Cp_p(E)= \inf\{\int_X(|u|^p+g^p)\ud\mu: g\textrm{ an upper gradient for }u\textrm{ s.t. } u\ge 1\textrm{ on }E \}. \] Let (P) be a defined pointwise property. We say that (P) holds $p$-quasieverywhere if the set where (P) fails to hold has $p$-capacity zero.

A \emph{condenser} is a pair of subsets $(E,\Omega)$ where $E\subset \Omega$ and $\Omega$ is open. The $p$-capacity of a condenser is defined by \[\Cp_p(E;\Omega)= \inf\{\int_\Omega g^p\ud\mu: u\ge 1\textrm{ on }E, \ u=0\textrm{ on } X\setminus\Omega \}, \] where $g$ is an upper gradient of $u$. As we shall see this concept will play an important role for us. More information on $p$-capacities, equivalent notions and variants, can be found for instance in \cite{kin00, bjo11}.

\subsubsection*{Maps with (locally) integrable upper gradients} To study maps between metric spaces we adopt the framework used in \cite{hei00}. Let $(X,d,\mu)$ be a metric measure space and $V$ a Banach space with the Lipschitz extension property; that is, given any metric space $Z$, each $L$-Lipschitz map $f:A\to V$ from an arbitrary subset $A\subset Z$ may be extended to a $CL$-Lipschitz map $\overline f:X\to V$, with constant $C$ independent of $Z,A$ and $f$. Examples of such spaces are $V=\R$ and $V= \ell^\infty$.

A map $u:X\to V$ is \emph{measurable} if $u^{-1}(U)$ is measurable for every open set $U\subset V$. It is essentially separably valued if there is a set $N\subset X$ with $\mu(N)=0$ so that $u(X\setminus N)$ is a separable subset of $V$. See \cite[Chapter 2]{hei00} for a detailed discussion of Banach space valued $L^p$-spaces.


The \emph{Dirichlet class} $\Di pXV$ consists of measurable maps $u:X\to V$ which have a $p$-integrable $p$-weak upper gradient $g$.\footnote{It is implicitly understood that maps $u,v$ which agree outside a set of $p$-capacity zero are identified, similarly to the usual $L^p$-theory.} Since for any $p$-weak upper gradient $g$ of $u$ (not necessarily $p$-integrable) there is a sequence $g_k$ of \emph{upper gradients} such that $\|g_k-g\|_{L^p(X)}\to 0$, \cite[Lemma 1.46]{bjo11}, it follows that the requirement of $u$ having a $p$-integrable $p$-weak upper gradient is equivalent to requiring that $u$ has a $p$-integrable upper gradient.

\subsubsection*{Minimal $p$-weak upper gradients} As in \cite[Section 2.2]{bjo11} (or \cite[Chapters 7 and 8]{HKST07}) it can be seen that the set \[ G_u=\{ g\in L^p(X): g\textrm{ is a $p$-weak upper gradient for u} \} \] is a closed and convex lattice, if $p>1$. It follows that there is a unique minimal element $g_u$ in the sense that for all $g\in G_u$, one has $g_u\le g$ almost everywhere. We arrive at the following \cite[Theorem 2.5]{bjo11}.

\begin{theorem}
For $p>1$, every map $u\in \Di pXV$ has a unique minimal $p$-integrable $p$-weak upper gradient, denoted $g_u$.
\end{theorem}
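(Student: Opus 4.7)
The plan is to establish the four properties of $G_u:=\{g\in L^p(X):g\text{ is a }p\text{-weak upper gradient of }u\}$ asserted just before the statement --- nonempty, convex, closed in $L^p$, and closed under pointwise minimum --- and then use them to extract the minimal element by a monotone minimizing sequence argument. Nonemptiness follows from $u\in \Di pXV$ by the definition of the Dirichlet class. Convexity is routine: for $g,h\in G_u$ and $\lambda\in[0,1]$, one has $\|u(\gamma(1))-u(\gamma(0))\|_V\le\lambda\int_\gamma g\,\ud s+(1-\lambda)\int_\gamma h\,\ud s$ for every rectifiable $\gamma$ outside the union of the two $p$-null curve families, so $\lambda g+(1-\lambda)h\in G_u$. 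Closedness in $L^p$ is obtained via Fuglede's lemma: if $g_n\to g$ in $L^p$, then along a subsequence $\int_\gamma |g_{n_k}-g|\,\ud s\to 0$ outside a $p$-exceptional family, and combining this with $\|u(\gamma(1))-u(\gamma(0))\|_V\le\int_\gamma g_{n_k}\,\ud s$ gives the upper gradient inequality for $g$. The lattice property --- that $\min(g,h)\in G_u$ whenever $g,h\in G_u$ --- is the subtlest; as in \cite[Section 2.2]{bjo11} one reduces first to honest upper gradients, then given a curve $\gamma$ splits $[0,1]$ by finite partitions according to which of $g,h$ is smaller at sample points, applies the upper gradient inequality on each subinterval, and refines.

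With these four properties in place, the minimizer is constructed as follows. Set $\alpha:=\inf\{\|g\|_{L^p(X)}:g\in G_u\}$, which is finite since $G_u\ne\varnothing$, and pick $g_n\in G_u$ with $\|g_n\|_{L^p}\to\alpha$. Define $\tilde g_n:=\min(g_1,\dots,g_n)$; iterated use of the lattice property gives $\tilde g_n\in G_u$, the sequence is pointwise decreasing and dominated by $g_1$, and $\|\tilde g_n\|_{L^p}\le \|g_n\|_{L^p}\to\alpha$. By dominated convergence $\tilde g_n\to g_u$ a.e.\ and in $L^p$ for some $g_u\ge 0$, and closedness of $G_u$ yields $g_u\in G_u$ with $\|g_u\|_{L^p}=\alpha$. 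To check minimality, given any $g\in G_u$, the lattice property gives $\min(g_u,g)\in G_u$ with $L^p$-norm at most $\|g_u\|_{L^p}=\alpha$; by definition of $\alpha$ equality holds, and since $\min(g_u,g)\le g_u$ pointwise with the same $L^p$-norm we conclude $\min(g_u,g)=g_u$ a.e., i.e.\ $g_u\le g$ a.e. Uniqueness follows by applying the same argument to any two minimizers.

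The main obstacle in carrying out this plan is the lattice property: one cannot naively decompose $\gamma$ into the (possibly uncountably many) subarcs where $g\le h$ and where $g>h$ and integrate separately, because such pieces need not be well-defined subcurves nor satisfy the upper gradient inequality individually. The standard workaround via finite partitions and limiting is classical but the delicate step; moreover the hypothesis $p>1$ enters through Fuglede's lemma and through the interaction of the closed convex lattice structure with reflexivity of $L^p$, both of which can fail at $p=1$. The remaining steps are formal variants of arguments in \cite{bjo11, HKST07}.
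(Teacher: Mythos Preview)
Your proposal is correct and follows exactly the outline the paper gives: the paper does not supply a proof but merely asserts (citing \cite[Section 2.2]{bjo11} and \cite[Chapters 7--8]{HKST07}) that $G_u$ is a closed convex lattice from which the minimal element follows, and you are filling in the standard details from those same references. Your monotone-minimizing-sequence extraction via the lattice property is one of the usual routes and works as written; the only minor imprecision is the phrase ``reduces first to honest upper gradients'' in the lattice step---the partition-and-refine argument is carried out directly for $p$-weak upper gradients along $p$-a.e.\ curve, not via an approximation by genuine upper gradients.
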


\subsubsection*{(local) Newtonian classes} We say that a map $u:X\to V$ belongs to the \emph{local Newtonian class}, $\Neml pXV$, if $u$ is locally $p$-integrable and possesses a ($p$-weak) upper gradient $g\in L^p_{loc}(X)$, while the \emph{Newtonian class}, $\Nem pXV$, consists of maps $u\in L^p(X;V)$ with a ($p$-weak) upper gradient $g\in L^p(X)$. 

\bigskip\subsubsection*{Maps with metric space target} Let $Y$ be a complete metric space. Recall the Kuratowski embedding $Y\to \ell^\infty(Y)$ where we send a point $y\in Y$ to the function $d_y-d_e$. Here $e\in Y$ is a fixed point and $d_y(x):=d(x,y)$. We define the classes 
\begin{align*}
&\Neml pXY = \{ u\in\Neml pX{\ell^\infty(Y)}: u(x)\in Y\textrm{ for $p$-quasievery }x\in X \} \\
&\Di pXY = \{ u\in\Di pX{\ell^\infty(Y)}: u(x)\in Y\textrm{ for $p$-quasievery }x\in X \} \\
&\Nem pXY = \{ u\in\Nem pX{\ell^\infty(Y)}: u(x)\in Y\textrm{ for $p$-quasievery }x\in X \}.
\end{align*}
We will mainly concern ourselves with $\Di pXY$.

\begin{definition}\label{standard}
The family of seminorms \[ \|u\|_{\Di p{\Omega}{\ell^\infty(Y)}}^p := \int_\Omega\|u\|^p\ud\mu+\int_Xg_u^p\ud\mu, \] for domains $\Omega\subset X$ with compact closure gives rise to a topology on $\Di pX{\ell^\infty(Y)}$. The restriction of this topology to $\Di pXY$ is called the \emph{standard topology} on $\Di pXY$.
\end{definition}
This way $\Di pXY$ becomes a closed subspace of $\Di pX{\ell^\infty(Y)}$. Clearly $u_j\to u$ as $j\to \infty$ in the standard topology if and only if \[ \|u-u_j\|_{\Di p{\Omega}{\ell^\infty(Y)}}\to 0\textrm{ as }j\to\infty \] for all domains $\Omega\subset X$ with compact closure.

There is also a different topology we may put on $\Di pXY$. We define it next.

\begin{definition}\label{ohtatop}
The topology on $\Di pXY$ induced by the family of pseudometrics \[ d_\Omega(u,v)=\int_\Omega d_Y(u,v)\ud\mu + \left(\int_X|g_u-g_v|^p\ud\mu\right)^{1/p}, \] with domain $\Omega\subset X$ with compact closure, is called the \emph{Ohta topology}.
\end{definition}
The topology defined above is based on the notion used by Ohta in \cite[Section 4]{oht04} (with $\|d(u,v)\|_{L^p(\Omega)}$ replaced by $\|d(u,v)\|_{L^1(\Omega)}$). Since $|g_u-g_v|\le g_{u-v}$ almost everywhere we see that \emph{convergence in the standard topology implies convergence in the Ohta topology.}

\bigskip\noindent Note that if $X$ is compact the expressions $\|u\|_{\Di p{\Omega}V}$ and $d_\Omega$ for $\Omega=X$ define metrics on $\Di pXY = \Nem pXY$. In such a case we call these the standard and Ohta metric, respectively. In general, the Ohta metric is \emph{ not} complete, see Subsection 4.2.

\subsection{Poincar\'e inequalities and its consequences} An analytic way of imposing a condition that ties the (geo)metric properties of $X$ and the behaviour of the measure $\mu$ is to require that upper gradients also control the behaviour of maps in some \emph{integral average} sense. This is done by the Poincar\'e inequality. 

We say that a metric measure space $(X,d,\mu)$ supports a weak $(1,p)$-Poincar\'e inequality if, whenever $u:X\to \R$ is locally integrable and $g:X\to [0,\infty]$ is a locally integrable upper gradient of $u$ the inequality 
\begin{equation}\label{Poincare}
\dashint_{B}|u-u_B|\ud\mu\le C\diam(B)\left(\dashint_{\sigma B}g^p\ud\mu\right)^{1/p}
\end{equation}
is satisfied with constants $C,\sigma$ independent of $u,g$ and $B$. The constants $\sigma, C$ in the Poincar\'e inequality and the doubling constant of the measure will be referred to as the \emph{data} of the space $X$.

By now doubling metric measure spaces supporting a weak $(1,p)$-Poincar\'e inequality are known to enjoy many geometric as well as analytic properties. We will only mention some of these that are relevant to this paper. There are numerous sources on the subject, and the interested reader is referred to \cite{hei98, kos98, che99, hei00, sha00, haj00, hei01, kei02, haj03, HKST07, bjo11} to name a few.

We record the following useful theorem from \cite[Theorem 4.3]{hei00}.

\begin{theorem}
Suppose $(X,d,\mu)$ is a complete doubling metric measure space. Then $X$ supports a weak $(1,p)$-Poincar\'e inequality for $p>1$ if and only if it supports a weak $(1,p)$-Poincar\'e inequality for $V$-valued maps, for any Banach space $V$, i. e. if there are constants $C',\sigma'\in [1,\infty)$ such that for every locally integrable map $u:X\to V$ and every upper gradient $g$ of $u$ the inequality \[ \dashint_B\|u-u_B\|_V\ud\mu \le C' r \left(\dashint_{\sigma' B}g^p\ud\mu\right)^{1/p} \] holds for all balls $B=B(x,r)$. The constants $ C'$ and $\sigma'$ then depend only on $p$ and the data of $X$.
\end{theorem}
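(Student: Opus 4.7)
The direction $(\Leftarrow)$ is immediate by taking $V=\R$ in the Banach-valued inequality. For the nontrivial direction, the plan is to upgrade the scalar Poincar\'e inequality to a pointwise estimate, transfer it to $V$-valued maps via a countable norming family, and then integrate. Since a locally integrable $u:X\to V$ has essentially separable image, one reduces at the outset to the case $V$ separable.

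The first step is the standard scalar pointwise estimate. For a real-valued $f$ with upper gradient $g$, telescoping along dyadic balls shrinking to a Lebesgue point and summing a geometric series via doubling yields
\[ |f(x)-f(y)|\le Cd(x,y)\bigl[(\M_R g^p(x))^{1/p}+(\M_R g^p(y))^{1/p}\bigr] \]
for $\mu$-a.e.\ $(x,y)\in X\times X$, with $R\approx d(x,y)$. To transfer this to $u:X\to V$, choose a countable norming family $\{\varphi_k\}\subset V^*$ with $\|\varphi_k\|\le 1$ and $\|v\|_V=\sup_k|\varphi_k(v)|$ for every $v\in V$ (available by separability). Each composition $\varphi_k\circ u$ is a real-valued, locally integrable function for which $g$ remains an upper gradient, since $|\varphi_k(u(x))-\varphi_k(u(y))|\le\|u(x)-u(y)\|_V\le\int_\gamma g\,\ud s$ along any rectifiable $\gamma$ joining $x$ and $y$. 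Applying the scalar pointwise estimate to each $\varphi_k\circ u$ off a null set $N_k$, and then taking the supremum over $k$ outside $N:=\bigcup_kN_k$, produces the same pointwise inequality with $\|u(x)-u(y)\|_V$ on the left.

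To close, integrate over $B\times B$: with $u_B=\dashint_Bu\,\ud\mu$ in the Bochner sense, the triangle inequality yields
\[ \dashint_B\|u-u_B\|_V\,\ud\mu\le\dashint_B\dashint_B\|u(x)-u(y)\|_V\,\ud\mu(y)\ud\mu(x)\le C\diam(B)\dashint_B(\M_Rg^p)^{1/p}\,\ud\mu. \]
Since $x\in B$ and $R\approx\diam(B)$, the truncated maximal function involves only the values of $g$ on a fixed dilate $\sigma'B$, so the remaining task is to bound $\dashint_B(\M(g^p\chi_{\sigma'B}))^{1/p}\,\ud\mu$ by $(\dashint_{\sigma'B}g^p\,\ud\mu)^{1/p}$. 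Here the hypothesis $p>1$ is essential: combining H\"older's inequality with the $L^{p/q}$-boundedness of the Hardy-Littlewood maximal operator for some $q<p$ (furnished by the Keith-Zhong self-improvement of the PI on complete doubling spaces) and doubling absorbs the maximal operator and yields the desired bound. This absorption step is the main obstacle, and it breaks at $p=1$; by contrast, once the scalar pointwise estimate and a countable norming family are in hand, the transfer from scalar to $V$-valued is essentially formal.
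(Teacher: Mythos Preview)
The paper does not prove this theorem; it records it with a citation to \cite[Theorem 4.3]{hei00}, so there is no in-paper argument to compare against.

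Your overall strategy is sound, but the final absorption step as written does not work. Having derived the pointwise estimate with exponent $p$ and integrated, you are left with $\dashint_B(\M_R g^p)^{1/p}\,\ud\mu$, and this \emph{cannot} be bounded by $(\dashint_{\sigma'B}g^p)^{1/p}$ in the way you suggest: Jensen gives $\dashint_B(\M_R g^p)^{1/p}\le(\dashint_B\M_R g^p)^{1/p}$, after which one would need boundedness of $\M$ on $L^1$, which is false. The ``$L^{p/q}$-boundedness of $\M$'' you invoke does not help here, because what sits inside the maximal operator is $g^p$, which is only assumed to lie in $L^1_{loc}$. The Keith--Zhong theorem must be invoked \emph{before} the telescoping step, not after: use it to obtain a scalar $(1,q)$-Poincar\'e inequality for some $q<p$, and rerun the pointwise argument with exponent $q$ to get
\[
\|u(x)-u(y)\|_V \le Cd(x,y)\bigl[(\M_R g^q(x))^{1/q}+(\M_R g^q(y))^{1/q}\bigr].
\]
After integrating over $B\times B$ you now hold $\dashint_B(\M_R g^q)^{1/q}\,\ud\mu$, and \emph{this} can be absorbed: H\"older bounds it by $(\dashint_B(\M_R g^q)^{p/q})^{1/p}$, and since $p/q>1$ the maximal operator is bounded on $L^{p/q}$ by doubling alone, yielding $C(\dashint_{\sigma'B}g^p)^{1/p}$. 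Note also that the cited source \cite{hei00} predates Keith--Zhong, so the original proof there necessarily proceeds by a different route; your argument, once repaired as above, is a legitimate alternative but not the one being quoted.
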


\medskip\subsubsection*{Measurability and local $p$-integrability} In the definition of $\Di pXV$ no local integrability assumption is made for a function $u\in \Di pXV$ itself. However the inequality \[ |\|u(\gamma(1))\|_V-\|u(\gamma(0))\|_V|\le \|u(\gamma(1))-u(\gamma(0))\|_V\le \int_\gamma g\] remains true without any measurability assumptions and implies that a $p$-weak upper gradient $g$ of $u$ is also a $p$-weak upper gradient for the function $x\mapsto \|u(x)\|_V$. 


From \cite[Theorem 1.11]{jar07} we have the following.

\begin{theorem}\label{rmk}
If $(X,d,\mu)$ supports a weak $(1,p)$- Poincar\'e inequality and $f:X\to [-\infty,\infty]$ is a function that has a $p$-integrable upper gradient, then $f$ is measurable and locally $p$-integrable.
\end{theorem}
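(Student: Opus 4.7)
The plan is a bootstrap argument. Since the Poincar\'e inequality as stated requires a locally integrable input, it cannot be invoked for $f$ directly; instead I work with the truncations $f_N := \max(-N,\min(f,N))$. Each $f_N$ inherits $g$ as a $p$-integrable upper gradient, since truncation is $1$-Lipschitz (with the convention that the upper gradient inequality $|f(\gamma(0))-f(\gamma(1))|\le \int_\gamma g$ forces $\int_\gamma g=\infty$ along any curve joining a finite-value point of $f$ to a $\pm\infty$-value point). Once $f_N$ is known to be measurable, it is, being bounded, automatically in $L^p_{\rm loc}$, so the real-valued Poincar\'e inequality applies:
\[
\dashint_B |f_N-(f_N)_B|\,\ud\mu \le Cr\left(\dashint_{\sigma B}g^p\,\ud\mu\right)^{1/p}.
\]
A Heinonen--Koskela telescoping chain of balls upgrades this to the pointwise Haj{\l}asz-type bound $|f_N(x)-f_N(y)|\le Cd(x,y)(\M_R g^p(x)^{1/p}+\M_R g^p(y)^{1/p})$ for $\mu$-a.e.\ nearby $x,y$, and averaging over $y$ in a small ball around a basepoint $x_0$ with $|f(x_0)|<\infty$ yields a uniform-in-$N$ bound on $\dashint_B |f_N|\,\ud\mu$. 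Monotone convergence as $N\to\infty$ then delivers $f\in L^p_{\rm loc}$.

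The crux is the measurability of each $f_N$. By $p$-integrability of $g$, the family $\{\gamma : \int_\gamma g=\infty\}$ has $p$-modulus zero, and along every other rectifiable curve $|f(\gamma(0))-f(\gamma(1))|\le \int_\gamma g<\infty$. In a complete doubling PI space, the $p$-modulus of curves joining any two disjoint positive-measure sets is itself positive --- a quantitative consequence of the Poincar\'e inequality via pencil-of-curves estimates. Hence the equivalence relation ``$x\sim y$ iff some rectifiable curve from $x$ to $y$ has $\int_\gamma g<\infty$'' has a single class of full $\mu$-measure. In particular $\{|f|=\infty\}$ is null or conull; in the trivial conull case $f\equiv\pm\infty$ a.e., and otherwise $f$ is a.e.\ finite. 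Measurability of $f_N$ is then obtained by fixing a basepoint $x_0$ with $|f(x_0)|<\infty$ and producing an explicitly Borel approximant that agrees with $f_N$ $\mu$-a.e.\ --- for instance, a dyadic-cell construction assigning to each cell the $f_N$-value at a representative point connected to $x_0$ by a curve of small $g$-energy.

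The main obstacle is precisely this measurability step. Without the Poincar\'e inequality the conclusion can fail, as the MEC$_p$-examples of J\"arvenp\"a\"a et al.\ in \cite{jar07} demonstrate, and the PI enters to upgrade ``$f$ finite along $p$-a.e.\ curve'' to ``$f$ determined, up to $g$-controlled error, by its values on a $\mu$-full connected set of base points''. Turning this qualitative statement into an explicit Borel representative of $f_N$ is the heart of the MEC$_p$ machinery of \cite{jar07}, and is where I would cite their paper rather than reproduce the full pencil-of-curves and discretization construction.
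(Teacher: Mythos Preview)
The paper does not give its own proof of this theorem: it simply records the statement and cites \cite[Theorem 1.1]{jar07}. Your proposal ultimately does the same thing --- after sketching the truncation/Poincar\'e/telescoping bootstrap, you defer the crucial measurability step to the MEC$_p$ machinery of \cite{jar07}. So there is nothing to compare: both the paper and your proposal rest on the same external reference for the substantive content.

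Your surrounding sketch is a reasonable and essentially correct outline of how the argument in \cite{jar07} is organized (truncate, use PI to get that the equivalence class of points joined by curves of finite $g$-energy has full measure, extract measurability, then pass to the limit for local $p$-integrability). One small caution: your ``monotone convergence as $N\to\infty$'' line should be applied to $|f_N|$ rather than $f_N$, and the Haj\l asz-type pointwise estimate holds only for a.e.\ pairs $(x,y)$, so the basepoint $x_0$ must be chosen from the good set --- but these are cosmetic and do not affect the validity of the outline.
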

Applying this to $f=\|u\|_V$ we see that in fact $x\mapsto \|u(x)\|_V$ is measurable (see \cite{hei00}) and locally $p$-integrable. Thus,

\medskip\noindent\emph{if $u$ is essentially separably valued, the existence of a $p$-integrable ($p$-weak) upper gradient implies both measurability and local $p$-integrability of $u$.}


\begin{remark}\label{lipup}
Regarding the minimal $p$-weak upper gradient of a locally lipschitz map $u:X\to V$, with $(X,d,\mu)$ a doubling metric measure space supporting a weak $(1,p)$-Poincar\'e inequality, we note that there is a constant $C$ depending only on the data of $X$ such that \[ \Lip u\le Cg_u \] almost everywhere. To see this we simply note that Keith's proof of his result \cite[Proposition 4.3.3]{kei04'}, \[ \Lip u(x)\le C\limsup_{r\to 0}\frac{1}{r}\dashint_{B(x,r)}|u-u_B|\ud\mu \textrm{ almost everywhere} \] applies to Lipschitz maps with a Banach space target. From this our claim follows by a straightforward application of the Poincar\'e inequality: \[ \limsup_{r\to 0}\frac{1}{r}\dashint_{B(x,r)}|u-u_B|\ud\mu\le C \limsup_{r\to 0}\left(\dashint_{B(x,\sigma r)} g_u^p\ud\mu\right)^{1/p}=Cg_u(x) \] almost everywhere. When the target is a locally compact CAT(0) space $Y$ (which is locally geodesically complete, see section 1.2 below) it is proven in \cite[Corollary 5.10]{oht04} that $\Lip u = g_u$ for locally lipschitz maps $u:X\to Y$.
\end{remark}

\medskip\subsubsection*{$p$-quasicontinuity} It follows from the properties of the $p$-modulus $\Mod_p$ and the definition of $p$-weak upper gradients that, given $u\in \Di pXV$, there exists a curve family $\Gamma$ with $\Mod_p(\Gamma)=0$ so that if $\gamma \notin \Gamma$ then \[ \|u(\gamma(b))-u(\gamma(a))\|_V\le \int_{\gamma|_{[a,b]}}g_u,\ a,b\in [0,1]. \] In particular $u$ is absolutely continuous along $p$-almost every curve $\gamma$.

For us a crucial continuity property is the following concept of $p$-quasicontinuity: a map $u: X\to V$ is said to be $p$-quasicontinuous if for every $\varepsilon >0$ there exists an open set $E\subset X$ with $\Cp_p(E)<\varepsilon$ such that $u|_{X\setminus E}:X\setminus E\to V$ is continuous.

\begin{lemma}
Suppose $(X,d,\mu)$ is a proper metric measure space supporting a weak $(1,p)$-Poincar\'e inequality. Then every map $u\in \Neml pXV$ is $p$-quasicontinuous.
\begin{proof}
Since $p$-quasicontinuity is a local property the claim is implied immediately by \cite[Corollary 6.8]{hei00}.
\end{proof}
\end{lemma}

\begin{remark}
We will often use the following equivalent formulation of $p$-qua\-si\-con\-ti\-nu\-ity: there is a \emph{decreasing} sequence $E_n\supset E_{n+1}$ of open sets in $X$ with $\Cp_p(E_n)<2^{-n}$ such that $u|_{X\setminus E_n}$ is continuous. Indeed, using $p$-quasicontinuity to select open sets $E^k$ with $\Cp_p(E^k)<2^{-k}$ such that $u|_{X\setminus E^k}$ is continuous, the sets $\displaystyle E_n=\bigcup_{k\ge n}E^k$ satisfy the conditions of this alternative formulation.

If $(u_k)_{k\in D}$ is a countable collection of maps in $\Neml pXV$ we may, by a similar procedure, produce a decreasing sequence $E_n\supset E_{n+1}$ of open sets so that $\Cp_p(E_n)<2^{-n}$ and $u_k|_{X\setminus E_n}$ is continuous for all $k\in D, n\in \N$.
\end{remark}

\bigskip\noindent Next we state the Rellich-Kondrakov theorem and another useful result.
\begin{theorem}[Rellich-Kondrakov]\label{rellich}
Let $X$ be a doubling metric measure space supporting a weak $(1,p)-$ Poincar\'e inequality, and $Y$ a proper metric space. If $u_j$ is a sequence in $\Neml pXY$, and $v\in \Neml pXY$ with \[ \sup_{j}\left[\int_Bd_Y(v,u_j)^p\ud\mu+\int_{5\sigma B}g_{u_j}^p\ud\mu\right] <\infty, \] for a given ball $B\subset X$, then there is a subsequence (denoted by the same indices) and $u\in \Nem pBY$ so that $$\|u_j-u\|_{L^p(B;Y)}\to 0$$ as $j\to \infty$ and, moreover, $$\int_{ B}g_u^p\ud\mu\le \liminf_{j\to \infty}\int_{ B}g_{u_j}^p\ud\mu.$$
\end{theorem}
\begin{proof}
Note that the assumptions imply for $q\in Y$,
\begin{align*}
&\sup_j\left[\int_Bd_Y(u_j,q)^p\ud\mu+\int_{5\sigma B}g_{u_j}^p\ud\mu\right] \\
\le &\sup_j\left[2^{p-1}\int_Bd_Y(v,u_j)^p\ud\mu+2^{p-1}\int_Bd_Y(v,q)^p\ud\mu+\int_{5\sigma B}g_{u_j}^p\ud\mu\right]<\infty.
\end{align*}
We have the scalar valued case of the claim by \cite[Theorem 8.3]{haj00}. Using the argument presented in the proof of \cite[Theorem 1.3]{kor93} we may reduce the claim to the scalar valued case, and hence we are done. 
\end{proof}

\begin{lemma}\label{le3}
Suppose $f_n$ is a sequence in $\Di pXV$ and $f_n\to f$ in $L^p_{loc}(X;V)$. If $g_n$ is a sequence of $p$-weak upper gradients of $f_n$ and $g_n\to g$ weakly in $L^p(X)$, then there is $\tilde f\in \Di pXV$ so that $f=\tilde f$ almost everywhere, and $g$ is a ($p$-integrable) $p$-weak upper gradient for $\tilde f$.

Moreover, if $f\in \Di pXY$ then we may choose $\tilde f = f$.
\begin{proof}
By Mazur's lemma \cite[Lemma 6.1]{bjo11} a sequence of convex combinations of the $g_n$'s converge to $g$ in norm. (In particular we may choose the convex combinations so that the $j^{\textrm{th}}$ element is a convex combination of $g_{j},g_{j+1},g_{j+2},\ldots$.) The corresponding sequence of convex combinations of the $f_ n$'s converges to $f$ in $L^p_{loc}(X;V)$ and therefore, by the proof of \cite[Lemma 3.1]{kal01} (see also \cite[Proposition 2.3]{bjo11}) $g$ is a $p$-weak upper gradient for a representative $\tilde f$ of $f$. The $p$-integrability is obvious.

The last assertion follows from the proof of \cite[Proposition 2.3]{bjo11}.
\end{proof}
\end{lemma}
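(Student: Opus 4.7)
The approach is to upgrade the weak convergence $g_n\rightharpoonup g$ to strong $L^p$-convergence of a sequence of convex combinations via Mazur's lemma, and then appeal to the known stability of $p$-weak upper gradients under strong $L^p$-limits along $L^p_{loc}$-convergent maps.

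First I would apply Mazur's lemma to the weakly convergent sequence $(g_n)$ to obtain indices $N_j\ge j$ and nonnegative coefficients $\lambda_{j,k}$ with $\sum_{k=j}^{N_j}\lambda_{j,k}=1$ such that the convex combinations
\[
\tilde g_j:=\sum_{k=j}^{N_j}\lambda_{j,k}\,g_k
\]
converge in norm to $g$ in $L^p(X)$. Setting $\tilde f_j:=\sum_{k=j}^{N_j}\lambda_{j,k}f_k$, convexity of the upper gradient inequality guarantees that $\tilde g_j$ is a $p$-weak upper gradient of $\tilde f_j$. Since $f_n\to f$ in $L^p_{loc}(X;V)$, the same holds for the convex combinations, so $\tilde f_j\to f$ in $L^p_{loc}(X;V)$ as well.

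At this stage one invokes \cite[Lemma 3.1]{kal01} (equivalently \cite[Proposition 2.3]{bjo11}): such a configuration, namely an $L^p_{loc}$-convergent sequence of maps whose $p$-weak upper gradients converge in $L^p$-norm, produces a representative $\tilde f$ of $f$ for which $g$ is a $p$-weak upper gradient; the $p$-integrability of $g$ is immediate from $g\in L^p(X)$. I expect this to be the main technical ingredient. The subtlety is that the upper gradient inequality for $(\tilde f_j,\tilde g_j)$ holds outside a $j$-dependent $p$-null curve family, so one extracts a subsequence along which $\tilde g_j\to g$ pointwise a.e.\ and dominated in $L^p$, applies Fuglede's lemma to pass to the limit in the curve integrals along $p$-almost every rectifiable $\gamma$, and then selects a representative $\tilde f$ whose endpoint values along $p$-a.e.\ curve are the correct limits of $\tilde f_j(\gamma(\cdot))$.

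For the \emph{moreover} assertion, when $f\in \Di pXY$ the map $f$ already has a $p$-integrable $p$-weak upper gradient of its own and is therefore absolutely continuous along $p$-a.e.\ rectifiable curve, in particular pointwise well-defined $p$-quasieverywhere. Inspecting the construction of $\tilde f$ in the proof of \cite[Proposition 2.3]{bjo11}, $\tilde f$ differs from $f$ only on an exceptional set whose removal does not alter the values of $f$ along $p$-a.e.\ curve; since $f$ is absolutely continuous and $\tilde f = f$ a.e., continuity along curves upgrades the a.e.\ equality to equality along $p$-a.e.\ curve, and one may take $\tilde f = f$.
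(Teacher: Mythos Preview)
Your proof is correct and follows essentially the same approach as the paper: apply Mazur's lemma to pass from weak to strong $L^p$-convergence of convex combinations of the $g_n$, note the corresponding convex combinations of the $f_n$ still converge to $f$ in $L^p_{loc}$, and then invoke \cite[Lemma 3.1]{kal01} / \cite[Proposition 2.3]{bjo11}. Your write-up supplies a bit more detail (the Fuglede-type reasoning and the justification of the ``moreover'' clause via absolute continuity along $p$-a.e.\ curve), but the argument is the same.
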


\subsection{Spaces of nonpositive curvature: Busemann and Alexandrov}

\noindent Let us mention to start with that of the two notions of nonpositive curvature, Busemann's and Alexandrov's, the more widely used is the notion given by Alexandrov. However, we shall use Busemann's definition of nonpositive curvature for the simple reason that the nature of the methods used in this paper corresponds quite naturally to the notions used in Busemann's definition.

A central theme in the theory of spaces of nonpositive curvature, both Busemann's and Alexandrov's, is convexity.

\bigskip\noindent Recall that a geodesic $\gamma$ joining two points $x,y\in Y$ in a metric space is a path satisfying $\ell(\gamma)=d(x,y)$. A geodesic $\gamma$ can always be constant speed parametrized so that $\gamma: [0,1]\to Y$ and $d(\gamma(t),\gamma(s))=\ell(\gamma)|t-s|$ for all $t,s\in [0,1]$; see \cite[Proposition 2.2.9]{pap05}.

We call a (path connected) metric space $(Y,d)$ \emph{locally complete and geodesic} if each point has a closed neighbourhood that is a complete geodesic space.


\begin{definition}\label{locconv}
\begin{itemize}
\item[(a)] A metric space $Y$ is called a \emph{Busemann space} if it is locally complete and geodesic, and for every pair of affinely reparametrized geodesics $\gamma,\sigma:[0,1]\to Y$ the distance map \[ t\mapsto d(\gamma(t),\sigma(t)): [0,1]\to \R \] is convex.

\item[(b)] A metric space $Y$ is \emph{locally convex} if each point has a neighbourhood that is a Busemann space with the induced metric. Such neighbourhoods are called Busemann convex neighbourhoods.
\end{itemize}
\end{definition}

Note that many authors define (local) convexity by considering geodesics with common starting point (see, for instance \cite[Chapter II.4]{bri99}). However, this seemingly weaker notion of (local) convexity is easily seen to be equivalent to the definition presented here.

\bigskip\noindent To speak about Alexandrov's notion of nonpositive curvature we need to introduce the concept of geodesic triangles and \emph{comparison} triangles. 

Let $Y$ be a locally complete and geodesic space. A geodesic triangle $\Delta \subset Y$ consists of three points $x,y,z\in Y$ and affinely reparametrized geodesics $\gamma_{xy},\gamma_{xz}, \gamma_{yz}$ connecting $x$ with $y$, $x$ with $z$ and $y$ with $z$, respectively. A comparison triangle $\overline{\Delta}\subset \R^2$ is a Euclidean triangle with vertices $\overline x, \overline y, \overline z$ such that the side lengths agree, i.e. $$|\overline x-\overline y|=d(x,y),\ |\overline x-\overline z|=d(x,z),\ |\overline y-\overline z|= d(y,z).$$ For any geodesic triangle a comparison triangle always exists, \cite[Lemma I.2.14]{bri99}. Given this, the notion of a comparison point to $w\in \Delta$ is self-explanatory.

\begin{definition}
\begin{itemize}
\item[(a)] A complete geodesic space $Y$ is said to be of \emph{global nonpositive curvature} if, for all geodesic triangles $\Delta$ with comparison triangle $\overline\Delta$ and any two points $a,b\in \Delta$, the comparison points $\overline a,\overline b\in \overline{\Delta}$ satisfy \[ d(a,b)\le |\overline a-\overline b|. \]
\item[(b)] A locally complete and geodesic space is said to be of \emph{nonpositive curvature} (an NPC space for short) if each point has a closed neighbourhood that is a space of global nonpositive curvature when equipped with the inherited metric.
\end{itemize}
\end{definition}

\bigskip\noindent In the literature one often encounters the name CAT(0) space for spaces of global nonpositive curvature. 

In intuitive terms a globally nonpositively curved space is one where geodesic triangles are ``thinner'' than their corresponding Euclidean comparison triangles. One sees from the two definitions that Alexandrov's does not directly pertain to convexity whereas Busemann's definition does. It is however true that a nonpositively curved space is locally convex (and similarly for the global notions). The converse fails to hold and so the class of locally convex spaces is strictly larger than that of nonpositively curved ones. In fact a Banach space is of global nonpositive curvature if and only if its norm comes from an inner product. In contrast, a Banach space is a Busemann space if and only if its unit ball is strictly convex. For a good account of convexity in normed spaces see \cite[Chapters 7 and 8]{pap05}, and also \cite[Chapter 4]{bri99}.  The difference between the two notions is that in Alexandrov's definition the points $a,b\in \Delta$ are allowed to be arbitrary while Busemann's definition only allows one to compare certain pairs of points without changing the comparison triangle. (Ones that are of the form $a=\gamma(t)$, $b=\sigma(t)$ for some $\gamma,\sigma\subset \Delta$ and \emph{the same $t$ for both}.)

Let us mention the following result, due to Alexandrov, from which the convexity of a globally nonpositively curved space follows, \cite[Cor. 2.5]{stu03}

\begin{proposition}
Let $Y$ be a space of global nonpositive curvature and let $\gamma,\sigma:[0,1]\to Y$ be two affinely reparametrized geodesics. Then for all $t\in [0,1]$ the inequality 
\begin{eqnarray} 
d^2(\gamma(t),\sigma(t))&\le td^2(\gamma(1),\sigma(1))+(1-t)d^2(\gamma(0),\sigma(0))\nonumber\\ 
&-t(1-t)[d(\gamma(1),\gamma(0))-d(\sigma(1),\sigma(0))]^2
\end{eqnarray}
holds. In particular the metric $d:Y\times Y\to \R$ is convex whence $Y$ is a Busemann space.
\end{proposition}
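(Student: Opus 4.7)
The plan is to reduce the statement to the Euclidean model. In $\mathbb{R}^n$, for two affinely parameterized segments $\gamma$ and $\sigma$, a direct algebraic expansion yields the identity
\[
|\gamma(t)-\sigma(t)|^2 = (1-t)|\gamma(0)-\sigma(0)|^2 + t|\gamma(1)-\sigma(1)|^2 - t(1-t)|v|^2,
\]
where $v=(\gamma(1)-\gamma(0))-(\sigma(1)-\sigma(0))$. Combined with the reverse triangle inequality $|v|\ge\bigl||\gamma(1)-\gamma(0)|-|\sigma(1)-\sigma(0)|\bigr|$, this gives precisely the analogue of the proposition in the Euclidean setting, the slack in the last term being exactly the reverse triangle inequality.

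To transfer the estimate to a CAT(0) target I would apply a Reshetnyak-type majorization to the geodesic quadrilateral with vertices $\gamma(0),\gamma(1),\sigma(1),\sigma(0)$. Concretely, I build the Euclidean comparison triangles for the two geodesic subtriangles $\gamma(0)\gamma(1)\sigma(1)$ and $\gamma(0)\sigma(1)\sigma(0)$ sharing the diagonal $\gamma(0)\sigma(1)$, and glue them along this common side. The resulting planar quadrilateral $\bar\gamma(0)\bar\gamma(1)\bar\sigma(1)\bar\sigma(0)$ has, by construction, the same four side lengths as the original. Marking $\bar\gamma(t)\in[\bar\gamma(0),\bar\gamma(1)]$ and $\bar\sigma(t)\in[\bar\sigma(0),\bar\sigma(1)]$ as the corresponding affinely parameterized points, Alexandrov comparison applied inside each subtriangle gives $d(\gamma(t),\sigma(t))\le|\bar\gamma(t)-\bar\sigma(t)|$, and the Euclidean identity from the first paragraph (valid because the four side lengths are preserved) applied to the right-hand side yields the claimed bound.

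The main obstacle is showing that the glued planar figure is genuinely convex, so that $\bar\gamma(t)$ and $\bar\sigma(t)$ really are points on opposite sides of a convex Euclidean quadrilateral to which the Euclidean identity applies. This reduces to verifying that the Alexandrov angles of the two comparison triangles at $\gamma(0)$ and at $\sigma(1)$ sum in each case to at most $\pi$, a classical but non-trivial consequence of the CAT(0) condition (the hinge/angle comparison) not explicitly recorded among the background results above. Once the main inequality is established, the \emph{in particular} clause is then routine: applying the inequality to the restrictions of $\gamma$ and $\sigma$ to an arbitrary subinterval of $[0,1]$ shows that $t\mapsto d^2(\gamma(t),\sigma(t))$ is convex, and this, together with the elementary fact that $t\mapsto d(q,\gamma(t))$ is convex for each fixed $q\in Y$ (a direct consequence of the basic CAT(0) triangle comparison applied to the apex $q$), allows one to conclude that $d$ itself is convex on pairs of geodesics, so that $Y$ is Busemann.
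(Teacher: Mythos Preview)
The paper does not prove this proposition; it merely records it with a reference to Sturm. So there is no ``paper's proof'' to compare against. Your overall approach via a planar comparison quadrilateral together with the Euclidean identity is the standard route to this inequality, and the identity you wrote down is correct.

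There is, however, a genuine gap. You assert that convexity of the glued comparison figure is ``a classical but non-trivial consequence of the CAT(0) condition,'' but this is false as stated: already in $Y=\mathbb{R}^2$ (which is CAT(0)) one may choose $\gamma$ and $\sigma$ so that the quadrilateral $\gamma(0)\gamma(1)\sigma(1)\sigma(0)$ is non-convex or even self-intersecting, and then the glued comparison figure---which coincides with that quadrilateral---is not convex. In such a case the segment $[\bar\gamma(t),\bar\sigma(t)]$ need not meet the diagonal, and the two-triangle estimate you sketch for $d(\gamma(t),\sigma(t))\le|\bar\gamma(t)-\bar\sigma(t)|$ breaks down. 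What one actually needs here is Reshetnyak's majorization theorem (equivalently, Alexandrov's straightening lemma applied to the glued figure): this produces a \emph{convex} planar quadrilateral with the same four side lengths together with a $1$-Lipschitz, arclength-preserving boundary map onto the geodesic quadrilateral in $Y$, after which your Euclidean computation goes through verbatim.

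A smaller point on the ``in particular'': convexity of $t\mapsto d^2(\gamma(t),\sigma(t))$ does not by itself give convexity of $t\mapsto d(\gamma(t),\sigma(t))$ (take $f(t)=\sqrt{t}$: $f^2$ is linear but $f$ is concave). The Busemann convexity of $d$ follows instead from a direct midpoint argument using only the one-point comparison $t\mapsto d(q,\gamma(t))$ that you already mention, without passing through $d^2$.
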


\begin{corollary}
A nonpositively curved space $Y$ is also locally convex.
\end{corollary}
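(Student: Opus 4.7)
The plan is to deduce the corollary directly from the preceding proposition together with the definition of an NPC space. The definition says that every point $y\in Y$ has a closed neighborhood $U_y$ which, equipped with the restricted metric, is a space of global nonpositive curvature. The preceding proposition then asserts that any such space is automatically a Busemann space, since the distance between affinely reparametrized geodesics is convex (in fact, the displayed quadratic inequality on $d^2(\gamma(t),\sigma(t))$ immediately implies convexity of $t\mapsto d(\gamma(t),\sigma(t))$).

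Concretely, I would proceed as follows. First, fix $y\in Y$ and produce, via the NPC assumption, a closed neighborhood $U=U_y$ of $y$ that is a complete geodesic space and of global nonpositive curvature. Second, invoke the preceding proposition applied to $U$ itself (this is legitimate because $U$ with the induced metric \emph{is} a globally nonpositively curved space by hypothesis) to conclude that $U$ is a Busemann space. Third, observe that $U$ being a Busemann space and a neighborhood of $y$ is exactly the condition in the definition for $U$ to be a Busemann convex neighborhood. Since $y\in Y$ was arbitrary, every point of $Y$ admits such a neighborhood, which is precisely the definition of local convexity.

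There is essentially no obstacle here; the only subtlety worth double-checking is that the preceding proposition's hypotheses really apply to $U$ with its restricted metric (not just to the ambient space $Y$). But this is immediate from the definition of NPC, which explicitly declares $U$ to be a space of global nonpositive curvature under the inherited metric. Thus the corollary is a direct combination of the definition of (local) nonpositive curvature and the proposition asserting that CAT(0) spaces are Busemann.
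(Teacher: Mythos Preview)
Your proposal is correct and follows exactly the reasoning the paper intends: the corollary is stated without proof in the paper, being an immediate consequence of the definition of an NPC space together with the preceding proposition that globally nonpositively curved spaces are Busemann spaces. Your explicit unpacking of this is precisely what the paper leaves implicit.
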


\bigskip\noindent The main reason for interest in locally convex spaces is the validity of a strong ``local to global'' principle. Below is a very general notion of this, see \cite[The Cartan-Hadamard Theorem 4.1, p.193]{bri99}, but the punchline of the principle is that a simply connected, locally convex metric space is \emph{globally} convex, i.e. a Busemann space (and similarly for the nonpositive curvature case).

\begin{theorem}\label{universal}
Let $Y$ be a locally convex space. Then $Y$ admits a universal covering $\tilde Y$ with a unique metric with the properties that the covering map $\pi:\tilde Y\to Y$ is a local isometry and $\tilde Y$ is a Busemann space.

If $Y$ is of nonpositive curvature (in the sense of Alexandrov) then the universal cover is a CAT(0) space.
\end{theorem}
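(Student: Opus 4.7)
The plan is to realize the classical Cartan--Hadamard construction in the Busemann setting. Since $Y$ is locally convex, every point has a Busemann convex neighbourhood, and such a neighbourhood is contractible (the geodesic from a fixed point to any other point furnishes an explicit strong deformation retraction). In particular $Y$ is locally path connected and semi-locally simply connected, so a topological universal cover $\pi:\tilde Y\to Y$ exists by standard covering space theory. I would then promote $\tilde Y$ to a metric space by declaring $\pi$ to be an isometry on each sheet over a Busemann convex neighbourhood; the global metric is obtained as the length metric $\tilde d(\tilde x,\tilde y)=\inf\ell(\tilde\gamma)$ where the infimum runs over rectifiable paths from $\tilde x$ to $\tilde y$ and $\ell$ is defined piecewise via the local isometries. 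Uniqueness of path lifts makes this metric unique, and local completeness and the local geodesic property are inherited from $Y$.

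The heart of the proof is the local-to-global upgrade. To see that $\tilde Y$ is geodesic, take any two points $\tilde p,\tilde q$; connect them by a rectifiable path, cover the path by finitely many Busemann convex neighbourhoods, and iteratively replace each small arc by its local geodesic. Local completeness plus compactness of $[0,1]$ yield convergence to a (reparametrized) geodesic, and simple connectedness of $\tilde Y$ prevents the construction from degenerating. Uniqueness of the geodesic follows by subdividing a homotopy rel endpoints between two purported geodesics $\tilde\gamma,\tilde\sigma$ into finitely many small squares, each lying in a single Busemann convex neighbourhood where local geodesics are unique; propagating uniqueness through the subdivision forces $\tilde\gamma=\tilde\sigma$.

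Global convexity of $t\mapsto \tilde d(\tilde\gamma(t),\tilde\sigma(t))$ for two affinely parametrized geodesics $\tilde\gamma,\tilde\sigma:[0,1]\to\tilde Y$ is then obtained by choosing a partition $0=t_0<\cdots<t_N=1$ fine enough that each pair of segments $\tilde\gamma([t_i,t_{i+1}])$, $\tilde\sigma([t_i,t_{i+1}])$ lies in a common Busemann convex neighbourhood, applying local convexity on each subinterval, and piecing the linear interpolations together using the fact that the supremum of finitely many convex functions agreeing at the breakpoints with an affine majorant is again convex. For the Alexandrov case one repeats the same argument with CAT(0) neighbourhoods replacing Busemann convex ones, comparing triangles (rather than pairs of geodesics) to Euclidean models and concluding that arbitrary geodesic triangles in $\tilde Y$ satisfy the CAT(0) inequality via subdivision into triangles small enough to lie in a single CAT(0) neighbourhood.

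The main obstacle is the rigorous execution of the ``subdivision of a homotopy'' step: one needs the Lebesgue number lemma applied to the open cover of $[0,1]^2$ by preimages of Busemann convex neighbourhoods under the homotopy, together with an induction that transports the convexity (respectively CAT(0)) inequality across adjacent cells without loss. Because this is the standard content of \cite[Chapter II.4]{bri99}, I would cite that source for the detailed bookkeeping rather than rewriting it, and simply verify that the hypotheses used there are covered by ``locally complete geodesic and locally convex'' as defined above.
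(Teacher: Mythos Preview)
The paper does not give its own proof of this theorem; it is quoted as a known result with a reference to \cite[The Cartan--Hadamard Theorem 4.1, p.~193]{bri99}, so your plan to cite \cite[Chapter II.4]{bri99} for the details aligns with what the paper actually does.

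Two points are worth noting. First, the paper records a specific model for $\tilde Y$ that you do not: the space $\tilde Y_q$ of constant speed local geodesics issuing from a fixed $q\in Y$, with covering map $p_q(\gamma)=\gamma(1)$ and the pulled-back length metric $d_q$. This concrete description is used repeatedly later in the paper (e.g.\ in the construction of $\hat Y$, in the identity $d_q(\alpha,\beta)=\ell(\langle\alpha\beta^{-1}\rangle)$, and in Proposition~\ref{discrete}), so if you were writing this paper you would want that model rather than the abstract topological universal cover. Second, your sketch of global convexity contains a genuine gap: you propose to partition $[0,1]$ so that the segments $\tilde\gamma([t_i,t_{i+1}])$ and $\tilde\sigma([t_i,t_{i+1}])$ lie in a \emph{common} Busemann convex neighbourhood, but if $\tilde\gamma$ and $\tilde\sigma$ are far apart (distance large compared to the local convexity radius) no such partition exists. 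The Bridson--Haefliger argument does not proceed this way; it hinges instead on the continuation lemma for local geodesics (Lemma~\ref{hololemma} in the paper), which lets one move a local geodesic by small perturbations of its endpoints while preserving convexity of the distance function. That lemma, not a partition of the parameter interval along two fixed geodesics, is what propagates local convexity to global convexity. Since you are deferring to \cite{bri99} anyway this does not break your proposal, but the sketch as written is misleading on this step.
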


\noindent In fact the universal covering space may be constructed as follows. Take any $q\in Y$ and consider the set $\tilde Y_q$ consisting of all constant speed parametrized local geodesics $\gamma:[0,1]\to Y$ starting at $q$ (i.e. $\gamma(0)=q$). The map $p_q:\tilde Y_q\to Y$ given by $p_q(\gamma)=\gamma(1)$ is a covering map and the metric of $Y$ pulls back under $p_q$ to produce a length metric $d_q$ on $\tilde Y_q$ such that the claims of Theorem \ref{universal} are valid. For details, see \cite[Chapter II.4]{bri99}.

The proof of Theorem \ref{universal} relies in large part on the following lemma which (along with its refinement \ref{hololemmar}) will be of independent use for us. For a reference see \cite[Lemma 4.3, p. 194]{bri99}.
\begin{lemma} \label{hololemma}
Suppose $Y$ is a locally convex space, $x,y\in X$. Let $x'\in B(x,\varepsilon)$, $y'\in B(y,\varepsilon)$, $\gamma:[0,1]\to Y$ a constant speed parametrized local geodesic joining $x$ and $y$, with $\varepsilon >0$ such that $B(\gamma(t),2\varepsilon)$ is Busemann convex for all $t$. Then there exists a unique constant speed parametrized local geodesic $\alpha:[0,1]\to Y$ joining $x'$ with $y'$ so that 
\begin{equation}\label{eq: hololemma}
t\mapsto d(\gamma(t),\alpha(t)) \textrm{ is a convex function.}
\end{equation}
Moreover, $\alpha$ satisfies \[\ell(\alpha)\le \ell(\gamma)+d(x',x)+d(y',y).\] In particular $d(\gamma(t),\alpha(t))<\varepsilon$ for each $t\in [0,1]$.
\end{lemma}
\noindent
We shall require a sharpening of Lemma \ref{hololemma}, a proof of which can be found in \cite[Theorem 9.2.4]{pap05}.

\begin{lemma}[refinement of Lemma \ref{hololemma}]\label{hololemmar}
Suppose $x,y\in Y$, $\varepsilon >0$ and $\gamma$  are as in the previous lemma; $u,v\in B(x,\varepsilon)$ and $u',v'\in B(y,\varepsilon)$. If $\alpha$ and $\beta$ are  the unique local geodesics provided by Lemma \ref{hololemma}, connecting $u$ with $u'$ and $v$ with $v'$, respectively, then the unique local geodesic connecting $v$ and $v'$ with respect to $\alpha$, provided by Lemma \ref{hololemma} is also $\beta$. 
\end{lemma}

A remark we will use without further mention in the sequel is the following. If $\alpha,\beta$ are two local geodesics with $$\sup_{0\le t\le 1}d_Y(\alpha(t),\beta(t))<\varepsilon$$ and $\varepsilon>0$ is such that $B(\alpha(t),2\varepsilon)$ is Busemann convex for all $t\in [0,1]$, then $t\mapsto d_Y(\alpha(t),\beta(t))$ is convex. This is true since for each $t$ there is a small neighbourhood where $\alpha$ and $\beta$ are both geodesics in the Busemann convex space $B_Y(\alpha(t),2\varepsilon)$ and thus the distance function $d_Y(\alpha,\beta)$ is convex near $t$.

Another particular consequence of Theorem \ref{universal} is a uniqueness property for homotopy classes of paths in locally convex spaces. Here we give a formulation identical to \cite[Corollary 9.3.3]{pap05} apart from the local compactness --assumption made there. For non-locally compact spaces the proof of this proposition is included in the proof of \cite[Corollary 4.7, p. 197]{bri99}.

\begin{proposition}\label{unigeo}
Let $Y$ be a \emph{complete} locally convex space. Then each continuous path $\gamma$ in $Y$ is (endpoint-preserving --) homotopic to a local geodesic $\sigma$, \emph{unique} up to reparametrization.
\end{proposition}

\section{Homotopies}

\noindent Now we introduce a notion of homotopy for $p$-quasicontinuous maps, generalizing the classical one. This so called \emph{$p$-quasihomotopy} is then compared to a different notion, appearing in \cite{bre01, han01, han03}. The notion $p$-quasihomotopy utilizes the geometric structure of the target space whereas the second notion, \emph{path homotopy}, which is stated for maps in the Dirichlet class, in fact relies on the topology of that class. 

\subsection{$p$-quasihomotopy and path homotopy}

Throughout this section $X$ stands for a complete doubling metric measure space $(X,d, \mu)$ supporting a weak $(1,p)$-Poincar\'e inequality for some $p>1$ which will be fixed for the rest of the paper. In the definitions below $Y$ stands for a complete separable metric space. Completeness ensures that $\Di pXY$ is closed and separability ensures that maps in $\Di pXY$ are essentially separably valued. By the remark after Theorem \ref{rmk} it is therefore always enough to find a $p$-integrable ($p$-weak) upper gradient for a map in order to show it is in $\Di pXY$. We shall gradually add assumptions on the target space $Y$.

\begin{definition}\label{phomotopy}
Let $u,v:X\to Y$ be $p$-quasicontinuous. We say that $u$ and $v$ are $p$-quasihomotopic if there exists a map $H\colon X\times [0,1]\to Y$ with the following property.

For every $\varepsilon >0$ there exists an open set $E\subset X$ with $\Cp_p(E)<\varepsilon$ such that $H_{X\setminus E\times [0,1]}$ is a usual homotopy between $u|_{X\setminus E}$ and $v|_{X\setminus E}$.
\end{definition}

The notion of $p$-quasihomotopy is in the spirit of \cite{kor93}, where homotopy of maps into nonpositively curved spaces is studied. Explicit emphasis is given to the topology (structure) of the target space instead of the topology of the Sobolev (or in our case Newtonian) space. 

It is noteworthy that for $p>Q=\log_2 C_\mu$ the $p$-capacity $\Cp_p$ becomes trivial in the sense that $\Cp_p(A)=0$ if and only if $A=\varnothing$. Therefore for these values of $p$, the notions of $p$-quasihomotopy and usual homotopy agree. This is natural in view of the Sobolev embedding theorem \cite[Theorem 6.2]{hei00}, which states that Newtonian maps, for $p>Q$, are in fact $(1-p/Q)$ -- H\"older continuous.

To talk about \emph{path homotopy} one needs to specify the topology used in $\Di pXY$. If not otherwise stated $\Di pXY$ will be equipped with the standard topology.

\begin{definition}\label{path}
We say that $u,v\in \Di pXY$ are \emph{path-homotopic} if there exists a continuous path $h\in C([0,1]; \Di pXY)$ connecting $u$ and $v$. 
\end{definition}
This definition appears in \cite{bre01} where much more concerning it can be found. (See also \cite{bre03, han01, han03} and the references therein.) The study of path homotopy classes is equivalent to the study of path components of the Dirichlet space $\Di pXY$.

\bigskip\noindent We shall see that a path-homotopy satisfying certain rectifiability assumptions can always be modified to become a $p$-quasihomotopy. Conversely, a locally geodesic $p$-quasihomotopy between maps with locally convex target defines a path in the Dirichlet class between the endpoint maps, but continuity must be taken with respect to the Ohta topology. (Such a $p$-quasihomotopy even satisfies some local rectifiability assumptions) 

\bigskip\noindent A first result reflects the geometric structure of the target space $Y$ in the $p$-quasihomotopies between maps. Given a $p$-quasihomotopy $H:u\simeq v$ denote
\begin{align*}
H_t:X\to Y,&\quad H_t(x)=H(x,t)\\
H^x:[0,1]\to Y,&\quad H^x(t)=H(x,t).
\end{align*}

\begin{theorem}\label{geodhom}
Suppose $Y$ is a complete, locally convex space and let $u,v:X\to Y$ be $p$-quasicontinuous and $p$-quasihomotopic. Given a $p$-quasihomotopy $\tilde H:u\simeq v$, there exists a $p$-quasihomotopy $H:u\simeq v$, unique in the following sense. For $p$-quasievery $x\in X$ the path $H^x$ is the unique local geodesic between $u(x)$ and $v(x)$ belonging to the homotopy class of $\tilde H^x$. 
\end{theorem}

\begin{remark}
\noindent
\begin{itemize}
\item[a)] Such a $p$-quasihomotopy is called \emph{locally geodesic}. Sometimes, for brevity, the word ``locally'' is omitted. (This does  not mean that the paths $H^x$ are geodesic.)
\item[b)] We use the notation $H:u\simeq v$ to signify that $H$ is a $p$-quasihomotopy between the maps $u$ and $v$.
\end{itemize}
\end{remark}

\begin{proof}
To prove the claim let $\tilde H:u\simeq v$ be a $p$-quasihomotopy. For $p$ a.e. $x$ set
\[H(x,t)=\gamma^x(t)\]
where $\gamma^x$ is the unique constant speed parametrized local geodesic in the homotopy class of $\alpha^x$, $\alpha^x(t)=\tilde H (x,t)$, given by Proposition \ref{unigeo}.

It suffices to prove that $H$ is a $p$-quasihomotopy. 
To this end let $\varepsilon >0$ be arbitrary and let $E$ be an open set such that $\Cp_p(E)<\varepsilon$ and $\tilde H|_{X\setminus E\times [0,1]}$ is a usual homotopy $u|_{X\setminus E}\simeq v|_{X\setminus E}$. We shall show that $H_{X\setminus E\times [0,1]}$ is also a usual homotopy. (Note in particular that by the choice of $E$, the maps $ H_0|_{X\setminus E}=u|_{X\setminus E}$ and $ H_1|_{X\setminus E}=v|_{X\setminus E}$ are continuous.) If $x\in X\setminus E$ and $\delta>0$ are given, let $\delta_0\le \delta$ be such that  $B(\tilde H_t(x),\delta_0)$ and $B(\gamma^x(t),2\delta_0)$ are Busemann convex, for all $t\in [0,1]$. By the continuity of $\tilde H|_{X\setminus E\times [0,1]}$ we may find $r>0$ such that $\tilde H_t(B(x,r)\setminus E)\subset B(\tilde H_t(x),\delta_0)$ for all $t\in [0,1]$.

For $y\in B(x,r)\setminus E$ let $\gamma'$ be the unique local geodesic with $\gamma'(0)=u(y)$, $\gamma'(1)=v(y)$, guaranteed by Lemmata \ref{hololemma} and \ref{hololemmar} such that 
\[t\mapsto d(\gamma^x(t),\gamma'(t))\]
is convex. 
Then $\gamma'$ is necessarily homotopic to $\alpha^y$: 
\[\gamma'\simeq \beta^{yx}_u\cdot\gamma^x\cdot \beta^{xy}_v \simeq \beta^{yx}_u\cdot\alpha^x\cdot \beta^{xy}_v \simeq \alpha^y.\]
Here $\beta^{xy}_u$ is the geodesic from $u(x)$ to $u(y)$ and $\beta^{yx}_v$ the geodesic from $v(y)$ to $v(x)$. The last homotopy follows since for all $t$ the points $\alpha^y(t)$ belong to the Busemann convex ball $ B(\alpha^x(t),\delta_0)$ by the choices of $y$ and $\delta$.

This shows that in fact $\gamma'=\gamma^y$ (by uniqueness) and from the estimates in Lemma \ref{hololemma} we have, for $y,z\in B(x,r)\setminus E$ 
\begin{align*}
d_Y(H_t(y),H_t(z))\le &\ td_Y(v(y),v(z))+(1-t)d_Y(u(y),u(z)) \\
&\textrm{and} \\
d_Y(H_t(z),H_s(z))\le &\ |t-s|\ell(\gamma^z)\le |t-s|(\ell(\gamma^x)+d_Y(u(x),u(z))+d_Y(v(x),v(z))).
\end{align*}
These estimates prove the continuity of $H|_{X\setminus E\times [0,1]}$.
\end{proof}

\begin{theorem}\label{convexi}
Suppose $Y$ is a separable, complete locally convex space. If $H:u\simeq v$ is a locally geodesic $p$-quasihomotopy between two $p$-quasicontinuous maps $u,v: X\to Y$ then it satisfies the following convexity estimate: whenever $x\in X$ and $\varepsilon >0$ is such that $B(H_t(x),2\varepsilon)$ is a convex ball for all $t\in [0,1]$, $y,z\in X$ satisfy $\displaystyle \max_{0\le t\le 1}d_Y(H_t(x),H_t(y))<\varepsilon$ and $\displaystyle \max_{0\le t\le 1}d_Y(H_t(x),H_t(z))<\varepsilon$, we have
\begin{eqnarray}
d_Y(H_t(y),H_t(z)) \le & td_Y(v(y),v(z))+(1-t)d_Y(u(y),u(z)) \label{eq: convex}\\
|\ell(H^y)-\ell(H^z)| \le & d_Y(u(y),u(z))+d_Y(v(y),v(z)). \label{eq: est}
\end{eqnarray}
Here $H^w$ denotes the local geodesic $t\mapsto H_t(w)$, $w\in X$.
\begin{proof}
The paths $\gamma_1 =t\mapsto H_t(y)$ and $\gamma_2 =t\mapsto H_t(z)$ are local geodesics. For each $t_0\in [0,1]$ there is a neighbourhood $U\ni t_0$ such that $\gamma_1|_U$ and $\gamma_2|_U$ are geodesics in the Busemann space $B(H_{t_0}(x),2\varepsilon)$ and thus the function $d_Y(H_t(y),H_t(z))=d_Y(\gamma_1(t),\gamma_2(t))$ is convex in $U$. Therefore it is convex in $[0,1]$, proving (\ref{eq: convex}).

To prove (\ref{eq: est}) we may assume, without loss of generality, that $\ell(H^y)\ge \ell(H^z)$. Let $\gamma'$ be the local geodesic from $u(y)$ to $v(z)$ guaranteed by Lemmata \ref{hololemma} and \ref{hololemmar}. Then by the convexity of $d_Y(H^y,\gamma')$ and the local geodesic property we have, for small $t>0$ 
\begin{align*}
t\ell(H^y)=& d_Y(u(y),H_t(y))=d_Y(\gamma'(0),H_t(y))\le d_Y(\gamma'(0),\gamma'(t))+d_Y(\gamma'(t),H_t(y)) \\
\le & t\ell(\gamma')+td_Y(v(z),v(y)).
\end{align*}
The same argument for the inverse paths $(\gamma')^{-1}$ and $(H^z)^{-1}$ yields \[ t\ell((\gamma')^{-1}) \le t\ell((H^z)^{-1})+td_Y(u(z),u(y)). \]
Cancelling out $t$ and moving $\ell(H^z)=\ell((H^z)^{-1})$ to the other side we obtain (\ref{eq: est}).
\end{proof}
\end{theorem}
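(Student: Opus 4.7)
The plan is to prove the two estimates separately, leveraging the local geodesic nature of $H^y, H^z$ together with Lemmata \ref{hololemma} and \ref{hololemmar} and the observation (recorded in the remark preceding the theorem) that nearby local geodesics have convex distance function.

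For the convexity estimate \eqref{eq: convex}, I would first observe that both $\gamma_1(t):=H_t(y)$ and $\gamma_2(t):=H_t(z)$ are constant speed parametrized local geodesics, since $H$ is locally geodesic. The triangle inequality together with the hypothesis gives $d_Y(\gamma_1(t),\gamma_2(t))\le 2\varepsilon$, and more importantly both $\gamma_1(t)$ and $\gamma_2(t)$ lie in the Busemann convex ball $B(H_t(x),2\varepsilon)$. Appealing to the remark after Lemma \ref{hololemmar}, each $t_0\in [0,1]$ admits a neighbourhood on which both $\gamma_1$ and $\gamma_2$ are honest geodesics in that Busemann convex ball, so $t\mapsto d_Y(\gamma_1(t),\gamma_2(t))$ is convex locally, hence convex on $[0,1]$. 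Since $\gamma_1(0)=u(y),\ \gamma_2(0)=u(z)$ and $\gamma_1(1)=v(y),\ \gamma_2(1)=v(z)$, the linear interpolation of the endpoint values gives \eqref{eq: convex}.

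For the length estimate \eqref{eq: est}, I would reduce by symmetry to the case $\ell(H^y)\ge \ell(H^z)$ and construct an auxiliary local geodesic $\gamma'$ connecting $u(y)$ to $v(z)$. The natural way to produce $\gamma'$ is to apply Lemma \ref{hololemma} perturbing the endpoints of $H^y$ (keep $u(y)$, move $v(y)$ to $v(z)$), which yields a local geodesic close to $H^y$, and then invoke Lemma \ref{hololemmar} to conclude that the same $\gamma'$ is obtained by perturbing the endpoints of $H^z$ (move $u(z)$ to $u(y)$, keep $v(z)$). This ensures $t\mapsto d_Y(H_t(y),\gamma'(t))$ and $t\mapsto d_Y(H_t(z),\gamma'(t))$ are both convex functions vanishing at one endpoint. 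Then, for small $t>0$, the local geodesic property yields $t\ell(H^y)=d_Y(u(y),H_t(y))$, and the triangle inequality gives
\[ t\ell(H^y)\le d_Y(\gamma'(0),\gamma'(t))+d_Y(\gamma'(t),H_t(y))\le t\ell(\gamma')+td_Y(v(z),v(y)), \]
where the last step uses that $d_Y(\gamma'(\cdot),H^y(\cdot))$ is convex and vanishes at $0$. Running the same argument with the inverses of $\gamma'$ and $H^z$ yields $\ell(\gamma')\le \ell(H^z)+d_Y(u(y),u(z))$. Combining produces \eqref{eq: est}.

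The main subtlety (and the reason the two convexity lemmata are both invoked) is the consistency of the auxiliary local geodesic $\gamma'$: one must know that the local geodesic obtained by perturbing two endpoints of $H^y$ is the same as the one obtained by perturbing the two endpoints of $H^z$, otherwise the two triangle inequalities cannot be chained. This is precisely what Lemma \ref{hololemmar} provides, so the argument hinges on checking that the hypothesis $\max_t d_Y(H_t(x),H_t(w))<\varepsilon$ for $w\in\{y,z\}$, together with $B(H_t(x),2\varepsilon)$ being Busemann convex, puts us in the setting of that refinement.
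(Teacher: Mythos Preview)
Your proposal is correct and follows essentially the same route as the paper: for \eqref{eq: convex} you use the local convexity of the distance between the two local geodesics $H^y$ and $H^z$ inside the Busemann ball $B(H_t(x),2\varepsilon)$, and for \eqref{eq: est} you introduce the auxiliary local geodesic $\gamma'$ from $u(y)$ to $v(z)$ via Lemmata \ref{hololemma} and \ref{hololemmar}, then chain the two triangle-inequality estimates. Your explicit discussion of why Lemma \ref{hololemmar} is needed (to guarantee that the same $\gamma'$ arises whether one perturbs from $H^y$ or from $H^z$, so that both distance functions $d_Y(H^y,\gamma')$ and $d_Y(H^z,\gamma')$ are convex) is in fact more detailed than the paper's terse invocation of the two lemmata, but the argument is the same.
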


\subsection{$p$-quasihomotopies as paths in the Dirichlet class}

\noindent We may view a $p$-quasihomotopy $H:u\simeq v$ as gliding the map $u$ to $v$ through the path $t\mapsto H_t$ in a quasicontinuous manner. Our aim in this subsection is to develop this view and study $p$-quasihomotopies as paths in the Dirichlet class $\Di pXY$. To pass from pointwise information to paths in the Dirichlet class we need sufficiently good geometric behaviour from the target space. We continue assuming that $(X,d,\mu)$ is a complete doubling metric measure space supporting a weak $(1,p)$-Poincar\'e inequality and $Y$ is a (complete and separable) locally convex space (see Subsection 1.4).


The following stronger version of \cite[Proposition 1.48]{bjo11} will prove very useful for us.

\begin{lemma}\label{refinement}
Let $E_n\subset X$ be a sequence of sets with $\varepsilon_n:=\Cp_p(E_n)$ converging to zero. Denote \[ \Gamma_\infty = \{ \gamma: \gamma^{-1}(E_n)\ne \varnothing \ \forall n \}. \] Then $\Mod_p(\Gamma_\infty)=0$.
\begin{proof}
Let $u_m$ be such that $u_m|_{E_m}=1$, $u_m\ge 0$ and $g_m$ an upper gradient of $u_m$ with $$ \int_X(|u_m|^p+g_m^p)\le 2\varepsilon_m.$$ Since $\varepsilon_m\to 0$ we have $u_m\to 0$ in $\Ne pX$ as $m\to \infty$. Consequently we may pass to a subsequence $u_{m_k}$ converging to zero outside a set $F$ of $p$-capacity zero and satisfying \[ \sum_{k=1}^\infty \varepsilon_{m_k}^{1/p}<\infty. \].

For $l \ge 1$, set \[ \rho_l=\sum_{k\ge l}g_{m_k}. \] Then \[ \left(\int_X\rho_l^p\ud\mu\right)^{1/p}\le \sum_{k\ge l}\left(\int_Xg_{m_k}^p\ud\mu\right)^{1/p}\le 2\sum_{k\ge l}\varepsilon_{m_k}^{1/p}. \] Denote by $\Gamma_F$ the family of paths $\gamma$ with the property that $\gamma^{-1}(F)\ne \varnothing$. Then for $\gamma \in \Gamma_\infty\setminus\Gamma_F$ we have that for each $k$ there exists $t_k\in [0,1]$ with $\gamma(t_k)\in E_{m_k}$ while $\gamma(0)\notin F$. Given $l\ge 1$ we have, for all $k\ge l$ the estimate \[ |u_{m_k}(\gamma(0))-1|=|u_{m_k}(\gamma(0))-u_{m_k}(\gamma(t_k))|\le \int_\gamma g_{m_k}\le \int_\gamma \rho_l. \] Taking $k\to\infty$ we have $u_{m_k}(\gamma(0))\to 0$, whence $$ \int_\gamma \rho_l\ge 1.$$ This shows that \[ \Mod_p(\Gamma_\infty\setminus \Gamma_F)\le \int_X\rho_l^p\ud\mu \le 2^p(\sum_{k\ge l}\varepsilon_{m_k}^{1/p})^p\to 0 \] as $l\to \infty$. Since by \cite[Proposition 1.48]{bjo11} we have $\Mod_p(\Gamma_F)=0$ it follows that $$ \Mod_p(\Gamma_\infty)\le \Mod_p(\Gamma_\infty\setminus\Gamma_F)+\Mod_p(\Gamma_F)=0$$ and the proof is complete.
\end{proof}
\end{lemma}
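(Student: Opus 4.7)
The idea is to use the capacity bounds to build, for each large $l$, a Borel density $\rho_l$ that is admissible for (essentially all of) $\Gamma_\infty$ while having small $L^p$-norm. First, for each $n$, select $u_n$ with $u_n \ge 1$ on $E_n$, $u_n \ge 0$, together with an upper gradient $g_n$ satisfying $\int_X (|u_n|^p + g_n^p)\,\ud\mu \le 2\varepsilon_n$. Then $u_n \to 0$ in $\Ne pX$, so by a standard Fuglede-type argument (essentially the cited Proposition 1.48 in \cite{bjo11}) I can pass to a subsequence $u_{m_k}$ which converges pointwise to zero outside a set $F$ with $\Cp_p(F)=0$, while simultaneously arranging $\sum_k \varepsilon_{m_k}^{1/p} < \infty$.

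Set $\rho_l := \sum_{k \ge l} g_{m_k}$. Minkowski's inequality gives $\|\rho_l\|_p \le \sum_{k \ge l} \|g_{m_k}\|_p \le 2\sum_{k \ge l} \varepsilon_{m_k}^{1/p}$, which tends to zero as $l \to \infty$. Let $\Gamma_F$ denote the family of rectifiable curves meeting $F$; by the cited proposition $\Mod_p(\Gamma_F)=0$. For $\gamma \in \Gamma_\infty \setminus \Gamma_F$, the endpoint $\gamma(0)$ lies outside $F$, so $u_{m_k}(\gamma(0)) \to 0$. By hypothesis there is $t_k$ with $\gamma(t_k) \in E_{m_k}$, where $u_{m_k} \ge 1$, and the upper gradient inequality yields
\[
1 - u_{m_k}(\gamma(0)) \le |u_{m_k}(\gamma(t_k)) - u_{m_k}(\gamma(0))| \le \int_\gamma g_{m_k} \le \int_\gamma \rho_l
\]
for every $k \ge l$. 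Letting $k \to \infty$ gives $\int_\gamma \rho_l \ge 1$, so $\rho_l$ is admissible for $\Gamma_\infty \setminus \Gamma_F$. Hence $\Mod_p(\Gamma_\infty \setminus \Gamma_F) \le \|\rho_l\|_p^p \to 0$, and subadditivity of modulus finishes the proof.

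The main technical obstacle is the subsequence-extraction step, namely converting the $L^p$ capacity bound $\int_X(|u_n|^p+g_n^p)\,\ud\mu\to 0$ into pointwise convergence outside a set of $p$-capacity zero. This is precisely the bridge between the integral information carried by $\Cp_p(E_n)\to 0$ and the curve-by-curve admissibility statement $\int_\gamma \rho_l\ge 1$ needed to estimate $\Mod_p$. Everything else—Minkowski, Borel--Cantelli-style summability, and one invocation of the upper gradient inequality—is routine packaging around this core observation.
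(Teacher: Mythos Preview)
Your proof is correct and follows essentially the same approach as the paper's: select near-optimal capacity potentials $u_n$ with upper gradients $g_n$, pass to a subsequence with summable $\varepsilon_{m_k}^{1/p}$ and pointwise convergence off a capacity-zero set $F$, set $\rho_l=\sum_{k\ge l} g_{m_k}$, and use the upper gradient inequality along curves in $\Gamma_\infty\setminus\Gamma_F$ to show $\rho_l$ is admissible. The only cosmetic difference is that the paper normalizes $u_m|_{E_m}=1$ whereas you take $u_n\ge 1$ on $E_n$, which changes nothing.
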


\begin{theorem}\label{di}
Let $u,v\in \Di pXY$ and $H:u\simeq v$ be a locally geodesic $p$-quasihomotopy. Then for $t\in [0,1]$ we have \[ g_{H_t}\le tg_v+(1-t)g_u \] almost everywhere. In particular $H_t\in \Di pXY$ for all $t$.
\end{theorem}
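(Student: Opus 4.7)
The plan is to show that the function $g := tg_v + (1-t)g_u$ is a $p$-weak upper gradient of $H_t$; the bound $g_{H_t}\le tg_v+(1-t)g_u$ a.e.\ will then follow from minimality of $g_{H_t}$. Since $g\in L^p(X)$ and $H_t(x)\in Y$ for $p$-quasievery $x$ by the construction in Theorem \ref{geodhom}, once the upper gradient assertion is in hand we also obtain $H_t\in \Di pXY$. The engine of the argument is the pointwise convexity estimate (\ref{eq: convex}) from Theorem \ref{convexi}.

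First I would set up the exceptional curve family. Using the $p$-quasihomotopy property together with the remark after the $p$-quasicontinuity lemma, pick a decreasing sequence of open sets $E_n$ with $\Cp_p(E_n)<2^{-n}$ such that $H|_{(X\setminus E_n)\times[0,1]}$ is a classical homotopy for each $n$. By Lemma \ref{refinement} the curve family meeting every $E_n$ has $p$-modulus zero; discarding also the null family of curves on which $g_u$ or $g_v$ fails to be a true (non-weak) upper gradient, it suffices to verify the upper gradient inequality for $H_t$ along an arbitrary rectifiable $\gamma\subset X\setminus E_n$ for some fixed $n$.

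Fix such a $\gamma$. For each $x\in\gamma([0,1])$, local convexity of $Y$ combined with compactness of the image $H^x([0,1])$ yields an $\varepsilon_x>0$ making $B(H_t(x),2\varepsilon_x)$ Busemann convex for every $t\in [0,1]$. Continuity of $H$ on $(X\setminus E_n)\times[0,1]$ and compactness of $[0,1]$ then produce a neighbourhood $U_x\subset X\setminus E_n$ of $x$ with $\max_t d_Y(H_t(x),H_t(y))<\varepsilon_x$ for all $y\in U_x$. By compactness of $\gamma([0,1])$, choose a partition $0=s_0<s_1<\cdots<s_N=1$ of the domain of $\gamma$ whose consecutive pairs $\gamma(s_i),\gamma(s_{i+1})$ lie in a common $U_{x_i}$. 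Applying the convexity estimate of Theorem \ref{convexi} on each segment and summing,
\[
d_Y(H_t(\gamma(0)),H_t(\gamma(1)))\le t\sum_i d_Y(v(\gamma(s_i)),v(\gamma(s_{i+1})))+(1-t)\sum_i d_Y(u(\gamma(s_i)),u(\gamma(s_{i+1}))),
\]
which is bounded by $t\,\ell(v\circ\gamma)+(1-t)\,\ell(u\circ\gamma)\le \int_\gamma (tg_v+(1-t)g_u)\,\ud s$, the last step because $g_u,g_v$ are upper gradients for $u,v$ along $\gamma$. Since the same argument applies to every subarc $\gamma|_{[a,b]}$, this is the full upper gradient inequality.

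The main obstacle is the simultaneous bookkeeping of three distinct null curve families: curves that hit the capacity-zero set where $H$ fails to be quasicontinuous, curves on which $g_u$ or $g_v$ fails as a strict upper gradient, and curves whose images are so bent that a finite partition respecting the local convexity scale $\varepsilon_x$ cannot be chosen. Once these are handled uniformly via Lemma \ref{refinement} and compactness, the remaining work is a telescoping of the pointwise convexity estimate, which is the quantitative content of ``locally geodesic''.
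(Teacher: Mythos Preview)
Your proof is correct and follows essentially the same route as the paper: discard a $p$-modulus null family via Lemma~\ref{refinement} together with the curves where $g_u,g_v$ fail as upper gradients, then on each good curve partition finely enough to invoke the convexity estimate~(\ref{eq: convex}) segment by segment and telescope. The only cosmetic difference is that the paper obtains a \emph{uniform} radius $\varepsilon$ and mesh $\delta$ by appealing directly to the uniform continuity of $H$ on a compact set containing $|\gamma|$, whereas you use pointwise radii $\varepsilon_x$ and a Lebesgue-number argument; both work. One small remark: your ``third obstacle'' (curves too bent for a finite partition) is not an actual null-family issue but is handled automatically by compactness of $\gamma([0,1])$, and the detour through $\ell(v\circ\gamma)$ is unnecessary since the subarc upper-gradient inequality already gives $\sum_i d_Y(v(\gamma(s_i)),v(\gamma(s_{i+1})))\le\int_\gamma g_v$ directly.
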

\begin{proof}
Let $E_m\supset E_{m+1}$ be a sequence of open sets in $X$, with $\Cp_p(E_m)<2^{-m}$ and $H|_{X\setminus E_m\times [0,1]}$ continuous. By Lemma \ref{refinement} and the fact that $u$ and $v$ are absolutely continuous on $p$-almost every curve, there is a curve family $\Gamma$ with $\Mod_p(\Gamma)=0$ such that each $\gamma\notin \Gamma$ satisfies
\begin{enumerate}
\item there exists $m_0$ so that $\gamma^{-1}(E_{m_0})=\varnothing$ (and consequently $\gamma^{-1}(E_m)=\varnothing$ for every $m\ge m_0$ )
\item the inequalities
\begin{align*}
d_Y(u(\gamma(b)),u(\gamma(a)))\le \int_{\gamma|_{[a,b]}}g_u \\
d_Y(v(\gamma(b)),v(\gamma(a)))\le \int_{\gamma|_{[a,b]}}g_v
\end{align*}
hold for $a,b\in [0,1]$.
\end{enumerate}
Fix such a $\gamma$ and let $K=|\gamma|\subset X\setminus E_{m_0}$. Since $H|_{K\times [0,1]}$ is uniformly continuous there exists $\varepsilon >0$ so that $B(H_t(\gamma(s)),2\varepsilon)$ is convex for all $t,s\in [0,1]$. Futhermore the uniform continuity implies the existence of $\delta>0$ so that $$\displaystyle \max_{0\le t\le 1}d_Y(H_t(\gamma(b)),H_t(\gamma(a)))<\varepsilon$$ whenever $|a-b|<\delta$. By the estimate (\ref{eq: convex}) we therefore have 
\begin{align*}
d_Y(H_t(\gamma(b)),H_t(\gamma(a)))\le & td_Y(v(\gamma(b)),v(\gamma(a)))+(1-t)d_Y(u(\gamma(b)),u(\gamma(a))) \\
\le & \int_{\gamma|_{[a,b]}}(tg_v+(1-t)g_u).
\end{align*}
Partitioning $[0,1]$ into subintervals of length $<\delta$ and applying the estimate above yields \[d_Y(H_t(\gamma(1)),H_t(\gamma(0)))\le \sum_{k}d_Y(H_t(\gamma(a_k)),H_t(\gamma(a_{k-1})))\le \int_\gamma (tg_v+(1-t)g_u). \]
This proves that $tg_v+(1-t)g_u$ is a $p$-weak upper gradient for $H_t$, and the claim follows.
\end{proof}

\begin{lemma}\label{locint}
Suppose $u,v\in \Di pXY$ and $H:u\simeq v$ is a locally geodesic $p$-quasihomotopy. Define $l_H:X\to \R$ by $$l_H(x)=\ell(H^x).$$ Then $l_H\in \Dir pX$, and \[ g_{l_H}\le g_u+g_v. \]
\end{lemma}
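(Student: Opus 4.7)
The plan is to show that $g_u+g_v$ is a $p$-weak upper gradient of $l_H$; the inequality $g_{l_H}\le g_u+g_v$ then follows from minimality, and membership $l_H\in\Dir pX$ follows because $g_u+g_v\in L^p(X)$, with measurability provided by Theorem \ref{rmk} (applied to $l_H\colon X\to[0,\infty]$). So the entire task is to upgrade the pointwise estimate (\ref{eq: est}) of Theorem \ref{convexi}, namely $|l_H(y)-l_H(z)|\le d_Y(u(y),u(z))+d_Y(v(y),v(z))$, into an integral inequality along $p$-almost every curve.

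The obstruction to applying (\ref{eq: est}) directly on the endpoints of a curve $\gamma$ is that the estimate requires the images $H_t(y),H_t(z)$ to sit in a common Busemann convex ball for every $t$. I would therefore follow the template of the proof of Theorem \ref{di}: choose a decreasing sequence of open sets $E_m\supset E_{m+1}$ with $\Cp_p(E_m)<2^{-m}$ on whose complements $H$ is continuous, and, invoking Lemma \ref{refinement} together with the absolute continuity of $u$ and $v$ on $p$-almost every curve, extract a modulus-zero family $\Gamma$ outside of which every curve $\gamma$ (i) eventually avoids $E_{m_0}$ and (ii) satisfies $d_Y(u\circ\gamma(b),u\circ\gamma(a))\le \int_{\gamma|_{[a,b]}}g_u$ and the analogous bound for $v$. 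For such a $\gamma$ with image $|\gamma|\subset K\Subset X\setminus E_{m_0}$, uniform continuity of $H$ on $K\times[0,1]$ yields an $\varepsilon>0$ making $B(H_t(\gamma(s)),2\varepsilon)$ Busemann convex for all $s,t$, and a $\delta>0$ so that $\max_t d_Y(H_t(\gamma(a)),H_t(\gamma(b)))<\varepsilon$ whenever $|a-b|<\delta$.

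Now partition $[0,1]$ into intervals $[a_{k-1},a_k]$ of length less than $\delta$. On each piece, estimate (\ref{eq: est}) applies to the points $\gamma(a_{k-1}),\gamma(a_k)$, and using the upper-gradient control of $u$ and $v$ on subarcs of $\gamma$ we obtain $|l_H(\gamma(a_k))-l_H(\gamma(a_{k-1}))|\le \int_{\gamma|_{[a_{k-1},a_k]}}(g_u+g_v)$. Summing via the triangle inequality and additivity of the path integral produces $|l_H(\gamma(1))-l_H(\gamma(0))|\le \int_\gamma(g_u+g_v)$, which is precisely the $p$-weak upper gradient condition. I do not anticipate a substantive obstacle: the argument is essentially parallel to Theorem \ref{di}, with (\ref{eq: est}) playing the role that (\ref{eq: convex}) played there; the only point deserving some care is checking that the short-interval hypotheses of (\ref{eq: est}) are met along $\gamma$, which is exactly what the uniform continuity of $H$ on $K\times[0,1]$ delivers.
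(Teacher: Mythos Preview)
Your proposal is correct and follows essentially the same argument as the paper: the paper's proof simply refers back to the proof of Theorem \ref{di}, selects the same exceptional family $\Gamma$, applies estimate (\ref{eq: est}) on short subintervals obtained from uniform continuity, and sums via the partition argument to conclude $|l_H(\gamma(1))-l_H(\gamma(0))|\le \int_\gamma(g_u+g_v)$. You have spelled out precisely the details the paper abbreviates by the phrases ``the reasoning in the previous proof'' and ``the same partitioning argument''.
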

\begin{proof}
It suffices to prove the inequality. For this let $\Gamma$ be as in the previous proof and fix $\gamma\notin \Gamma$. By the reasoning in the previous proof we have the existence of $\delta>0$ so that $|l_H(\gamma(b))-l_H(\gamma(a))|\le d_Y(u(\gamma(b)),u(\gamma(a)))+ d_Y(v(\gamma(b)),v(\gamma(a)))$ (estimate (\ref{eq: est})) whenever $|a-b|<\delta$, $a,b\in [0,1]$. By the same partitioning argument we arrive at \[ |l_H(\gamma(1))-l_H(\gamma(0))|\le \int_\gamma(g_u+g_v) \] and this proves the claim.
\end{proof}
\begin{corollary}\label{convloc}
In the situation of Lemma \ref{locint} we have, for each compact $K\subset X$, the inequality \[ \int_Kd_Y^p(H_t,H_s)\ud\mu \le |t-s|^p\int_Kl_H^p\ud\mu. \] Consequently $H_s\to H_t$ in $L^p_{loc}(X;Y)$ as $s\to t$.
\begin{proof}
The inequality follows directly from the fact that $$d_Y(H_t(x),H_s(x))\le |t-s|\ell(H^x)=|t-s|l_H(x)$$ for $p$-quasievery $x\in X$. The second claim is immediate from the first and the fact that $l_H\in L^p_{loc}(X)$.
\end{proof}
\end{corollary}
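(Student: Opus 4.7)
The plan is to obtain the displayed inequality as a direct integration of a pointwise $|t-s|$-Lipschitz estimate for the paths $H^x$, and then to deduce the $L^p_{loc}$-convergence from the local $p$-integrability of $l_H$ established (implicitly) in Lemma \ref{locint}. There is essentially no obstacle here; the argument is short and rests on the fact that, by Theorem \ref{geodhom}, each individual path $t \mapsto H_t(x)$ is a constant speed parametrized local geodesic.

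First I would verify the pointwise inequality
\[
d_Y(H_t(x),H_s(x)) \le |t-s|\,l_H(x)
\]
for $p$-quasievery $x \in X$. By the construction in Theorem \ref{geodhom}, such an $x$ has the property that $H^x := H(x,\cdot) : [0,1] \to Y$ is a constant speed parametrized local geodesic. Constant speed parametrization gives $\ell(H^x|_{[s,t]}) = |t-s|\,\ell(H^x) = |t-s|\,l_H(x)$, and since the metric distance is always dominated by the length of any connecting path,
\[
d_Y(H^x(t),H^x(s)) \le \ell(H^x|_{[s,t]}) = |t-s|\,l_H(x).
\]

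Next I would raise this estimate to the $p$-th power and integrate over $K$; the exceptional set has $p$-capacity zero and hence $\mu$-measure zero, so the integration is legitimate and yields the displayed inequality. For the $L^p_{loc}$-convergence, Lemma \ref{locint} shows $l_H \in \Dir pX$, and combining this with Theorem \ref{rmk} (which applies since $X$ supports a weak $(1,p)$-Poincar\'e inequality) gives that $l_H$ is locally $p$-integrable. Therefore $\int_K l_H^p\,\ud\mu < \infty$ for every compact $K$, and letting $s \to t$ in the displayed inequality delivers the claimed convergence.

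The only step that required any content from the paper is the constant speed local geodesic structure of $H^x$, which is precisely what distinguishes the locally geodesic representative constructed in Theorem \ref{geodhom} from an arbitrary $p$-quasihomotopy; everything else is routine. Thus I expect the writeup to amount to a couple of lines, essentially as in the author's own proof.
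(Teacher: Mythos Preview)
Your proposal is correct and follows exactly the paper's approach: both derive the displayed inequality from the pointwise Lipschitz estimate $d_Y(H_t(x),H_s(x))\le |t-s|\,l_H(x)$ coming from constant speed parametrization, then invoke the local $p$-integrability of $l_H$. Your explicit justification of $l_H\in L^p_{loc}(X)$ via Lemma~\ref{locint} and Theorem~\ref{rmk} is a welcome clarification of what the paper leaves implicit.
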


\begin{theorem}\label{polku}
Suppose $H:u\simeq v$ is a locally geodesic $p$-quasihomotopy. Then the map $\alpha:[0,1]\to \Di pXY$, given by  $$\alpha(t)= H_t,$$ is a continuous path when $\Di pXY$ is equipped with the Ohta topology.
\end{theorem}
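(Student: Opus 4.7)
The Ohta topology on $\Di pXY$ is generated by the pseudometrics $d_\Omega$ for compact $\Omega\subset X$, so continuity of $t\mapsto H_t$ at a fixed $t_0\in [0,1]$ amounts to showing, for every compact $\Omega$, that (i) $\int_\Omega d_Y(H_s,H_{t_0})\ud\mu\to 0$ and (ii) $\|g_{H_s}-g_{H_{t_0}}\|_{L^p(X)}\to 0$ as $s\to t_0$. Statement (i) is immediate from Corollary \ref{convloc}, since $L^p_{loc}$-convergence yields $L^1(\Omega)$-convergence by H\"older. The substance lies in (ii).

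For (ii) the plan is a weak-compactness plus convergence-of-norms argument, exploiting the uniform convexity of $L^p(X)$ for $p>1$. Theorem \ref{di} furnishes the uniform bound $g_{H_s}\le sg_v+(1-s)g_u\le g_u+g_v\in L^p(X)$. Given any sequence $s_n\to t_0$, pass to a subsequence with $g_{H_{s_{n_k}}}\rightharpoonup h$ weakly in $L^p(X)$. Since $H_{s_n}\to H_{t_0}$ in $L^p_{loc}(X;\ell^\infty(Y))$ under the Kuratowski embedding (Corollary \ref{convloc}), Lemma \ref{le3} identifies $h$ as a $p$-integrable $p$-weak upper gradient of $H_{t_0}$, so $g_{H_{t_0}}\le h$ almost everywhere and $\|g_{H_{t_0}}\|_p\le\|h\|_p\le\liminf_k\|g_{H_{s_{n_k}}}\|_p$ by weak lower semicontinuity.

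The crucial reverse inequality $\limsup_{s\to t_0}\|g_{H_s}\|_p\le\|g_{H_{t_0}}\|_p$ is obtained by observing that the affine reparametrization of $H|_{X\times [a,b]}$ is again a locally geodesic $p$-quasihomotopy, now between $H_a$ and $H_b$: the open sets witnessing quasicontinuity are inherited verbatim, and an affine reparametrization of a local geodesic is a local geodesic, still unique in its homotopy class by Proposition \ref{unigeo}. For $0<t_0<1$ and $s<t_0$, applying Theorem \ref{di} to the reparametrization of $H|_{X\times [0,t_0]}$ at time $s/t_0$ yields $g_{H_s}\le \tfrac{s}{t_0}g_{H_{t_0}}+\tfrac{t_0-s}{t_0}g_u$; the case $s>t_0$ is handled analogously via $H|_{X\times [t_0,1]}$, and the endpoint cases $t_0\in\{0,1\}$ follow directly from Theorem \ref{di} applied to $H$. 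Taking $L^p$-norms and sending $s\to t_0$ produces the required $\limsup$ estimate.

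Combining the two sides, $\|g_{H_{s_{n_k}}}\|_p\to\|g_{H_{t_0}}\|_p=\|h\|_p$, which together with $g_{H_{t_0}}\le h$ forces $h=g_{H_{t_0}}$ almost everywhere. Uniform convexity of $L^p(X)$ then upgrades weak plus norm convergence to strong convergence $g_{H_{s_{n_k}}}\to g_{H_{t_0}}$ in $L^p(X)$, and a standard sub-subsequence argument extends this to the full sequence $s_n\to t_0$, proving (ii). The point requiring most care is the $\limsup$ estimate, i.e.\ confirming that the restrictions and affine reparametrizations inherit the \emph{locally geodesic} $p$-quasihomotopy structure needed to invoke Theorem \ref{di}; once this is accepted, the remaining steps are routine functional-analytic cleanup.
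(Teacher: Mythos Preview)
Your proof is correct and follows essentially the same approach as the paper: uniform boundedness of $\|g_{H_s}\|_p$ from Theorem \ref{di}, weak sequential compactness, identification of the weak limit as an upper gradient via Lemma \ref{le3}, the reverse $\limsup$ inequality via reparametrized restrictions of $H$, and finally uniform convexity of $L^p$ to upgrade to norm convergence. The only cosmetic difference is that the paper first reduces to the endpoint case $t_0=0$ (by passing to $\tilde H(x,s)=H(x,t_0+s(1-t_0))$ and $\hat H(x,s)=H(x,t_0(1-s))$) and then runs the argument there, whereas you work directly at a general $t_0$ using the reparametrizations of $H|_{X\times[0,t_0]}$ and $H|_{X\times[t_0,1]}$; the underlying observation---that affine reparametrizations of restrictions remain locally geodesic $p$-quasihomotopies---is identical in both.
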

\begin{proof}
From Theorem \ref{di} we have that $t\mapsto H_t$ is a map $[0,1]\to \Di pXY$. That $H_s\to H_t$ in $L^1_{loc}(X;Y)$ as $s\to t$ follows from Corollary \ref{convloc}. Therefore we only need to focus on the convergence of the $p$-weak upper gradients. Let $t\in [0,1]$. We will show that the one-sided limits exist and agree: \[ \lim_{s\to t^+}g_{H_s}=g_{H_t}=\lim_{s\to t^-}g_{H_s} \] in the $L^p$-sense. We will make use of the following well known fact about uniformly convex Banach spaces:
\begin{fact}
Let $V$ be a uniformly convex Banach space, $x_k$ converges weakly to $x$ as $k\to\infty$ and further $\|x_k\|\to \|x\|$. Then $x_k\to x$ in norm.
\end{fact}
In fact it suffices to prove that $g_{H_s}\to g_{u}$ as $s\to 0$. This is because of the following: the restriction $H|_{X\times [t,1]}$ is a $p$-quasihomotopy between $H_t$ and $v$, so by rescaling the parameter side we obtain a locally geodesic $p$-quasihomotopy $\tilde H: H_t\simeq v$, $\tilde H(x,s)=H(x,t+s(1-t))$ so that $$ \lim_{s\to t^+}g_{H_s}=\lim_{s\to 0}g_{\tilde H_s};$$ to study $\lim_{s\to t^-}g_{H_s}$ we simply replace $\tilde H$ by $\hat H: H_t\simeq u$, $\hat H(x,s)=H(x,t(1-s))$.

\bigskip\noindent Now, to prove that $\lim_{s\to 0}g_{H_s}= g_u$ in $L^p(X)$, take any sequence $s_k\to 0$. Since 
\begin{align*}
\|g_{H_{s_k}}\|_{L^p(X)}\le s_k\|g_v\|_{L^p(X)}+(1-s_k)\|g_u\|_{L^p(X)}\le \|g_v\|_{L^p(X)}+\|g_u\|_{L^p(X)}
\end{align*}
for all $k$, the reflexivity of $L^p(X)$ implies that there is a subsequence (denoted by the same indices) converging to some $g\in L^p(X)$, whence \[ \|g\|_{L^p(X)}\le \liminf_{k\to \infty}\|g_{H_{s_k}}\|_{L^p(X)}. \] On the other hand, since $H_{s_k}\to u$ in $L^p_{loc}(X;Y)$ (Corollary \ref{convloc}) it follows by Lemma \ref{le3} that $g$ is a $p$-weak upper gradient for $u$. This and the convexity estimate together imply 
\begin{align}\label{conestimate}
\limsup_{k\to \infty}\|g_{H_{s_k}}\|_{L^p(X)}&\le \limsup_{k\to\infty}[s_k\|g_v\|_{L^p(X)}+(1-s_k)\|g_u\|_{L^p(X)}]\nonumber \\
&\le \|g_u\|_{L^p(X)}\le \|g\|_{L^p(X)},
\end{align}
where the last inequality comes from the minimality of $g_u$. Consequently $$\|g_{H_{s_k}}\|_{L^p(X)}\to \|g\|_{L^p(X)},$$ therefore $g_{H_{s_k}}\to g$ in norm. 

Let us still prove that $g= g_u$. From the fact that $g$ is a $p$-weak upper gradient for $u$  it follows that $g_u\le g$ almost everywhere, so it suffices to prove $\|g\|_{L^p(X)}\le \|g_u\|_{L^p(X)}$. This, however follows immediately the above estimate (\ref{conestimate}):
\begin{align*}
\|g\|_{L^p(X)}= \lim_{k\to \infty}\|g_{H_{s_k}}\|_{L^p(X)}\le \limsup_{k\to \infty}\|g_{H_{s_k}}\|_{L^p(X)}\le \|g_u\|_{L^p(X)}.
\end{align*}

\bigskip\noindent Altogether we have shown that for every $s_k\to 0$ we have $g_{H_{s_k}}\to g_u$ in $L^p(X)$ up to a subsequence. Therefore $g_{H_{s_k}}\to g_u$ in $L^p(X)$ and the proof is complete.
\end{proof}

\begin{remark}\label{rem}
Theorems \ref{geodhom} and \ref{polku} together imply that $p$-quasihomotopic maps into locally convex targets are also path homotopic if $\Di pXY$ is equipped with the Ohta topology.

To see this suppose $u,v\in \Di pXY$ are $p$-quasihomotopic. By Theorem \ref{geodhom} they may be connected by a locally geodesic $p$-quasihomotopy. Theorem \ref{polku} then implies that $u$ and $v$ are path-homotopic in the Ohta topology.
\end{remark}

\subsection{Pointwise properties of path homotopies.}

\noindent In this subsection we prove Theorem \ref{reimpliesp}. Before that let us make some remaks.

\subsubsection*{Remarks} 
\begin{enumerate}
\item We pose no control on the $L^p$-norms of $d_Y(h_t,h_s)$.
\item The conditions of the theorem are a sort of rectifiability requirement for the path $h$. Since $g_{d_Y(u,v)}\le g_{u-v}$ almost everywhere the condition is implied if $h$ is a rectifiable path $h:[0,1]\to \Di pXY$ when $\Di pXY$ is equipped with the standard topology. 
\item Note that $g_{d_Y(u,v)}$ may vanish without the same being true of $g_{u-v}$ (think of maps $u\equiv 0\in \R^n$ and $v$ taking values in $S^{n-1}$).
\item The relation between $g_{d_Y(u,v)}$ and $|g_u-g_v|$ is not clear; when $Y=\R$ we have $g_{d_Y(u,v)}=g_{|u-v|}=g_{u-v}\ge |g_u-g_v|$ but the previous example shows that $g_{d_Y(u,v)}$ may vanish without $|g_u-g_v|$ vanishing. This question is related to the open question \cite[2.13]{bjo11}.
\item The proof below yields a slightly stronger claim that Theorem \ref{geodhom}: a path satisfying the conditions may be modified by possibly changing its values in a negligible set (each $[0,1]$-slice of which has measure zero) so that it becomes a $p$-quasihomotopy.
\end{enumerate}

\begin{proof}[Proof of Theorem \ref{reimpliesp}]
Let $D_n=\{ k/2^n: k=0,\ldots 2^n \}$ and $\displaystyle D=\bigcup_nD_n$ (the dyadic rationals on the interval $[0,1]$). We may find a sequence $E_m\supset E_{m+1}$ of open subsets of $X$ with $\Cp_p(E_m)<2^{-m}$ and $h_s|_{X\setminus E_m}$ continuous for all $m\in \N$ and $s\in D$. For $\displaystyle x\notin E:=\bigcap_mE_m$ define
\begin{align*}
L_n(x)&=2^{n(1-1/p)}\left(\sum_{k=1}^{2^n} d_Y^p(h_{k/2^n}(x),h_{(k-1)/2^n}(x))\right)^{1/p},\textrm{ and}\\
L(x)&=\sup_nL_n(x).
\end{align*}
Note that $L_n$ is pointwise increasing: denoting $$d_{j,n}=d_{j,n}(\cdot)=d_Y(h_{j/2^n}(\cdot),h_{(j-1)/2^n}(\cdot))$$ we have $d_{j,n}\le d_{2j,n+1}+d_{2j-1,n+1}$, so that
\begin{align*}
L_n&=2^{n(1-1/p)}\left(\sum_{j=1}^{2^n}d_{j,n}^p\right)^{1/p}\le 2^{n(1-1/p)}\left(\sum_{j=1}^{2^n}2^{p-1}(d_{2j,n+1}^p+d_{2j-1,n+1}^p)\right)^{1/p} \\
&= 2^{(n+1)(1-1/p)}\left(\sum_{j=1}^{2^{n+1}} d_{j,n+1}^p\right)^{1/p}=L_{n+1}.
\end{align*}
We shall use this notation throughout the proof of Theorem \ref{reimpliesp}. (If we set $p=1$ in the definition of $L_n$, we are in fact measuring the ``length'' of the ``path'' $D\ni s\mapsto h_s(x)$. The finiteness of this ``length'' however only guarantees that $s\mapsto h_s(x)$ is a sort of BV-map.) The significance of $L$ (as defined above) is shown by the next lemma.

\begin{lemma}\label{le1}
If $L(x)<\infty$ then the map $D\ni s\mapsto h_s(x)$ extends to a $(1-1/p)$-H\"older continuous path, denoted $h^x:[0,1]\to Y$ (joining the points $u(x)$ and $v(x)$) with H\"older constant $L(x)$.
\begin{proof}[Proof of Lemma \ref{le1}]
Let us define $h^x(s)$ for $s\in D$ by $h^x(s)=h_s(x)$. For $n\in\N$ and $j=1,\ldots, 2^n$ we have \[ d_Y^p(h^x(j/2^n),h^x((j-1)/2^n))\le 2^{-n(p-1)}L_n(x)^p\le 2^{-n(p-1)}L(x)^p.\] For $k,l\in \{0,\ldots , 2^n\}$, $l>k$, the triangle inequality implies \[ d_Y(h^x(l/2^n),h^x(k/2^n))\le \sum_{j=k+1}^l d_Y(h^x(j/2^n),h^x((j-1)/2^n))= \sum_{k<j\le l}d_{j,n}(x).\] We may use the H\"older inequality as follows: 
\begin{align*}
\sum_{k<j\le l} d_{j,n}(x)=&\sum_{k<j\le l}1\cdot d_{j,n}(x) \le \left(\sum_{k<j\le l}1^{p/(p-1)}\right)^{1-1/p}\left(\sum_{k<j\le l}d_{j,n}(x)^{p}\right)^{1/p}\\
=& (l-k)^{1-1/p}2^{-n(1-1/p)}2^{n(1-1/p)}\left(\sum_{k<j\le l}d_{j,n}(x)^{p}\right)^{1/p}\\
\le & \left(\frac{l-k}{2^n}\right)^{1-1/p}L_n(x).
\end{align*}
If $s,t\in D$ we may write $s=k/2^n$ and $t= l'/2^m$. Assuming, without loss of generality that $n\ge m$ we have $t=2^{n-m}l'/2^n, s=k/2^n$ and putting the above estimates together yields \[ d_Y(h^x(t),h^x(s))\le |t-s|^{1-1/p}L_n(x)\le |t-s|^{1-1/p}L(x).\] This proves the claim.
\end{proof}
\end{lemma}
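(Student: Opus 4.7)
The plan is to first verify the Hölder condition on the dense set $D$ of dyadic rationals and then invoke completeness of $Y$ to extend continuously to all of $[0,1]$.

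For the estimate on $D$, given $s,t\in D$, I would choose $n$ large enough that both belong to $D_n$, writing $s=k/2^n$ and $t=l/2^n$ with $k<l$. The triangle inequality gives
\[
d_Y(h_t(x),h_s(x))\le \sum_{j=k+1}^{l}d_{j,n}(x).
\]
The key manipulation is Hölder's inequality with exponents $p/(p-1)$ and $p$ applied to the pairing $1\cdot d_{j,n}(x)$:
\[
\sum_{k<j\le l}d_{j,n}(x)\le (l-k)^{1-1/p}\left(\sum_{k<j\le l}d_{j,n}(x)^p\right)^{1/p}.
\]
Extending the inner sum to all of $\{1,\ldots,2^n\}$ and recognizing the normalization built into $L_n(x)$ — namely that $\left(\sum_{j=1}^{2^n}d_{j,n}(x)^p\right)^{1/p}=2^{-n(1-1/p)}L_n(x)$ — the two powers of $2^n$ combine to produce a factor of $|t-s|^{1-1/p}$, yielding
\[
d_Y(h_t(x),h_s(x))\le \left(\frac{l-k}{2^n}\right)^{1-1/p}L_n(x)\le |t-s|^{1-1/p}L(x).
\]

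For the extension step, I would observe that the map $s\mapsto h_s(x)$ is uniformly continuous on the dense subset $D\subset [0,1]$, so by completeness of $Y$ it admits a unique continuous extension $h^x:[0,1]\to Y$. The Hölder estimate passes to this extension by continuity of the metric. Since $0,1\in D$ and $h_0=u$, $h_1=v$, the endpoints are automatically $u(x)$ and $v(x)$, and the path joins these points as claimed.

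The main obstacle — really more of a key insight than a serious obstacle — is recognizing that the normalization factor $2^{n(1-1/p)}$ in the definition of $L_n(x)$ is precisely what is required to convert Hölder's inequality into a bound in $|t-s|^{1-1/p}$ once the common denominator $2^n$ is absorbed. Beyond this observation the argument is entirely routine, amounting to a telescoping-plus-Hölder estimate and a standard density/completeness extension.
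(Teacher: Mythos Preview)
Your proposal is correct and follows essentially the same route as the paper: triangle inequality along dyadic increments, H\"older's inequality with exponents $p$ and $p/(p-1)$, and recognition that the normalization $2^{n(1-1/p)}$ in $L_n(x)$ converts the bound into $|t-s|^{1-1/p}L(x)$. If anything, you are slightly more explicit than the paper about the extension step via completeness of $Y$, which the paper leaves implicit.
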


\noindent The result is very much in the spirit of the Sobolev embeddings; by this analogy the need for $p>1$ in the definition of $L_n$ becomes apparent.

\bigskip\noindent The rest of the proof is devoted to obtaining pointwise control over $L$. We start with the following lemma.
\begin{lemma}\label{le2}
We have $L\in L^p_{loc}(X)$ and $L_n\to L$ pointwise everywhere. Furthermore the functions \[ g_n:=2^{n(1-1/p)}\left(\sum_{k=1}^{2^n}g_{d_Y(h_{k/2^n},h_{(k-1)/2^n})}^p\right)^{1/p}\]
are $p$-weak upper gradients for $L_n$, satisfying \[ \sup_n\|g_n\|_{L^p(X)}\le C\] where $C$ is the constant in the assumption of the Theorem.
\begin{proof}[Proof of Lemma \ref{le2}]
Since $(L_n)_n$ is a pointwise increasing sequence convergence everywhere follows. By the monotone convergence theorem \[ \int_KL^p\ud\mu=\lim_{n\to \infty}\int_KL_n^p\ud\mu, \quad K\subset X\textrm{ compact.}\] The Poincar\'e inequality for balls $B\subset X$ together with assumption of the Theorem implies that 
\begin{align*}
\left(\dashint_Bd_Y^p(h_t,h_s)\ud\mu\right)^{1/p}& \le \dashint_Bd_Y(h_t,h_s)\ud\mu+Cr\left(\dashint_{\sigma B} g^p_{d_Y(h_t,h_s)}\ud\mu\right)^{1/p} \\
&\le C_B/\mu(B)|t-s|+Cr/\mu(B)^{1/p}|t-s|=C'_B |t-s|.
\end{align*}
Applying this to $d_{k,n}:=d_Y(h_{k/2^n},h_{(k-1)/2^n})$ we have
\begin{align*}
\int_BL_ n^p\ud\mu =& \int_B2^{n(p-1)}\sum_{k=1}^{2^n}d_{k,n}^p\ud\mu\\
=&2^{n(p-1)} \sum_{k=1}^{2^n}\int_Bd_{k,n}^p\ud\mu \le 2^{n(p-1)}\sum_{k=1}^{2^n}\mu(B)(C'_B)^p2^{-np}\\
=&\mu(B)(C'_B)^p
\end{align*}
for all $n\in \N$ and consequently $L\in L^p_{loc}(X)$. (Incidentally, this implies that $L(x)<\infty$ almost everywhere.)

\bigskip\noindent For the second claim fix a family of curves $\Gamma$ with $\Mod_p(\Gamma)=0$ so that whenever $\gamma\notin\Gamma$ the upper gradient inequality \[ |d_{k,n}(x)-d_{k,n}(y)|\le \int_\gamma g_{d_{k,n}}\] is satisfied. For these curves we may estimate
\begin{align*}
&|L_n(x)-L_n(y)|= \\ 
& 2^{n(1-1/p)}\left|\left(\sum_{k=1}^{2^n}d_{k,n}^p(x)\right)^{1/p}-\left(\sum_{k=1}^{2^n}d_{k,n}^p(y)\right)^{1/p}\right|\\
&\le 2^{n(1-1/p)}\left(\sum_{k=1}^{2^n}|d_{k,n}(x)-d_{k,n}(y)|^p\right)^{1/p}\\
&\le 2^{n(1-1/p)}\left(\sum_{k=1}^{2^n}\left(\int_\gamma g_{d_{k,n}}\right)^p\right)^{1/p}.
\end{align*}
The rightmost term may be estimated using the Minkowski inequality in integral form \cite[Theorem 202, p. 148]{har88} by \[ 2^{n(1-1/p)}\int_\gamma \left(\sum_{k=1}^{2^n}g_{d_{k,n}}^p\right)^{1/p}.\] We arrive at \[ |L_n(x)-L_n(y)|\le \int_\gamma g_n.\]

\bigskip\noindent To see the last part use the condition in the statement of the theorem to compute
\begin{align*}
\int_Xg_ n^p\ud\mu=2^{n(p-1)}\sum_{k=1}^{2^n}\int_Xg_{d_{k,n}}^p\ud\mu\le 2^{n(p-1)}\sum_{k=1}^{2^n}C^p2^{-np}=C^p.
\end{align*}
This completes the proof of Lemma \ref{le2}.
\end{proof}
\end{lemma}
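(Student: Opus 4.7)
The plan has four essentially independent pieces, matching the four assertions in the statement.

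First, pointwise convergence is immediate from the monotonicity observation established just before the lemma: $(L_n)$ is pointwise nondecreasing, so $L_n(x)\uparrow \sup_n L_n(x) = L(x)$ at every point $x$ where $L_n$ is defined.

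Second, for $L\in L^p_{loc}(X)$, I would fix a ball $B\subset X$ and use the monotone convergence theorem to reduce the matter to a uniform-in-$n$ bound on $\int_B L_n^p\,\ud\mu$. Expanding $L_n^p = 2^{n(p-1)}\sum_{k=1}^{2^n} d_{k,n}^p$, the task is to control each $\int_B d_{k,n}^p\,\ud\mu$. Here I would apply the Poincar\'e inequality (in its $L^p$-averaged form, available via the self-improving Theorem \ref{zho}) to the scalar function $d_{k,n}$ with $g_{d_{k,n}}$ as its $p$-weak upper gradient, obtaining $(\dashint_B d_{k,n}^p\,\ud\mu)^{1/p} \le \dashint_B d_{k,n}\,\ud\mu + Cr(\dashint_{\sigma B} g_{d_{k,n}}^p\,\ud\mu)^{1/p}$. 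Both summands are $\lesssim 2^{-n}$ times a constant depending on $B$, by the two hypotheses of Theorem \ref{reimpliesp} (the first with $K = \overline B$, the second giving the global $L^p$-bound). Summing $2^n$ such estimates and multiplying by $2^{n(p-1)}$ produces a bound independent of $n$.

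Third, to show $g_n$ is a $p$-weak upper gradient of $L_n$, I would fix a single $p$-null curve family outside which \emph{all} the countably many upper-gradient inequalities $|d_{k,n}(x)-d_{k,n}(y)|\le \int_\gamma g_{d_{k,n}}$ hold (a countable union of $p$-null families is $p$-null). The key observation is that, up to the prefactor $2^{n(1-1/p)}$, $L_n(x)$ is the $\ell^p$-norm of the vector $(d_{k,n}(x))_{k=1}^{2^n}$; since the $\ell^p$-norm is $1$-Lipschitz, for such a curve $\gamma$ one has
$$|L_n(x)-L_n(y)| \le 2^{n(1-1/p)}\Bigl(\sum_{k=1}^{2^n}|d_{k,n}(x)-d_{k,n}(y)|^p\Bigr)^{1/p} \le 2^{n(1-1/p)}\Bigl(\sum_{k=1}^{2^n}\Bigl(\int_\gamma g_{d_{k,n}}\Bigr)^p\Bigr)^{1/p}.$$
Applying Minkowski's inequality in integral form to pull the finite-sum $\ell^p$-norm in $k$ inside the path integral yields exactly $\int_\gamma g_n$. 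Finally, the uniform $L^p$-bound is a one-line computation: $\int_X g_n^p\,\ud\mu = 2^{n(p-1)}\sum_{k=1}^{2^n}\int_X g_{d_{k,n}}^p\,\ud\mu \le 2^{n(p-1)}\cdot 2^n \cdot (C\cdot 2^{-n})^p = C^p$ by the first hypothesis of the theorem.

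The main obstacle is the third step: identifying $L_n$ as a scaled $\ell^p$-norm of the $d_{k,n}$'s and then invoking Minkowski's integral inequality cleanly to commute the finite $\ell^p$-sum with the curve integral. The remaining steps are routine, with the Poincar\'e inequality and the two hypotheses of Theorem \ref{reimpliesp} supplying all the quantitative control.
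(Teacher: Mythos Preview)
Your proposal is correct and follows essentially the same route as the paper's proof: monotonicity for pointwise convergence, a uniform $\int_B L_n^p$ bound via the $(p,p)$-Poincar\'e inequality applied to each $d_{k,n}$, the $\ell^p$-triangle inequality combined with Minkowski's integral inequality for the upper-gradient claim, and the direct computation for $\sup_n\|g_n\|_{L^p}$. One minor remark: the $L^p$-averaged Poincar\'e inequality you need already follows from the standing $(1,p)$-Poincar\'e plus doubling (the standard Sobolev--Poincar\'e upgrade), so invoking Keith--Zhong is unnecessary, though harmless.
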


\bigskip\noindent Since $(g_n)$ is bounded in $L^p(X)$ there is a subsequence converging weakly to some $g\in L^p(X)$. By Mazur's lemma a sequence of  convex combination of $g_n$'s converges to $g$ in $L^p$. The corresponding sequence of convex combination of $L_n$'s converges everywhere to $L$ and $L<\infty$ almost everywhere. Therefore by \cite[Proposition 2.4]{bjo11} $g$ is a $p$-integrable $p$-weak upper gradient of $L$.

\bigskip\noindent We conclude that $L\in \Dir pX$. In particular $L$ is $p$-quasicontinuous and finite $p$-quasieverywhere. 

\bigskip\noindent Define $H(x,t)=h^x(t)$ for every $x\in X$ for which $L(x)<\infty$, $h^x$ being the path from $u(x)$ to $v(x)$ given by Lemma \ref{le1}. Let us prove that $H$ is a $p$-quasihomotopy.

To this end let $F_m\supset F_{m+1}$ be a sequence of open sets in $X$  with $\Cp_p(F_m)<2^{-m}$ and $L|_{X\setminus F_m}$ continuous, for $m\in \N$. Set $U_m= E_m\cup F_m$. We claim that $H|_{X\setminus U_m\times [0,1]}$ is a continuous homotopy between $u|_{X\setminus U_m}$ and $v|_{X\setminus U_m}$, for all $m$.

It is clear that $H_0=u$ and $H_1=v$ $p$-quasieverywhere so only the continuity remains to be proven. Let $x_k\in X\setminus U_m$, $t_k\in [0,1]$, $(x_k,t_k)\to (x,t)$ where $x\in X\setminus U_m$. There is a compact set $K\subset X\setminus U_m$ containing all $x_k$'s, and $\displaystyle \sup_{z\in K}L(z)<\infty$. Therefore the paths $h^{x_k}$ are equicontinuous and pointwise bounded (since $h^{x_k}(s)=h_s(x_k)\to h_s(x)=h^x(s), s\in D$). By the Arzela-Ascoli theorem $h^{x_k}$ converges uniformly up to a subsequence to a path $\gamma$. But since $h^{x_k}\to h^x$ pointwise in a dense set $D$ it follows that $\gamma= h^x$. This argument shows that any subsequence of $h^{x_k}$ has a further subsequence converging uniformly to $h^x$. From this it follows that $h^{x_k}\to h^x$ uniformly. In particular $h^{x_k}(t_k)\to h^x(t)$, as $k\to \infty$. The proof of Theorem \ref{reimpliesp} is now complete.
\end{proof}

\begin{corollary}
Suppose $Y$ is a locally convex space and $H:u\simeq v$ a locally geodesic $p$-quasihomotopy. Then two maps $u,v\in \Di pXY$ are $p$-quasihomotopic if and only if there exists a path joining $u$ and $v$, satisfying the conditions of Theorem \ref{reimpliesp}.
\begin{proof}
It is not difficult to see, using the argument in the proofs of Theorem \ref{di} and Lemma \ref{locint} that $g_{d_Y(H_t,H_s)}\le |t-s|g_{l_H}$. This, together with Corollary \ref{convloc}, implies that a locally geodesic $p$-quasihomotopy satisfies the conditions of Theorem \ref{reimpliesp}.
\end{proof}
\end{corollary}
\noindent The section is closed by the proof of Corollary \ref{rect}.
\begin{proof}[Proof of Corollary \ref{rect}]
Suppose $h:[0,1]\to \Nem pXY$ is a continuous rectifiable path joining $u,v\in \Nem pXY$. Denote by $\tilde h:[0,1]\to \Nem pXY$ the constant speed parametrization of $h$. The path $\tilde h$ is Lipschitz in the standard metric of $\Nem pXY$, i.e. $$\left(\int_Xd_Y(\tilde h_t,\tilde h_s)^p\ud\mu\right)^{1/p}+\left(\int_Xg_{\tilde h_t-\tilde h_s}^p\ud\mu\right)^{1/p}\le C|t-s|, \quad t,s\in [0,1]. $$ By the inequality
\begin{align*}
g_{d_Y(\tilde h_t,\tilde h_s)}\le g_{\tilde h_t-\tilde h_s}\textrm{ a.e.}
\end{align*}
for any $t,s\in [0,1]$ we see that $\tilde h$ satisfies the assumptions of Theorem \ref{reimpliesp}. The claim follows from this.
\end{proof}

\section{''Lifting'' $p$-quasihomotopies}

\noindent Besides thinking of $p$-quasihomotopies as paths in $\Di pXY$, there is another way of looking at them. In this section we concentrate on this view, which is reminiscent of lifting paths in covering space theory. 

The aim is to view a (locally geodesic) $p$-quasihomotopy $H:u\simeq v$ between two maps $u,v\in \Di pXY$ as a single Newtonian map, with target space $\diaco Y$ a certain covering space of $Y\times Y$.

\subsection{The diagonal cover} We start by constructing the diagonal covering space $\diaco Y$ and recalling some useful facts. Throughout this section $(X,d,\mu)$ stands for a complete doubling metric measure space supporting a weak $(1,p)$-Poincar\'e inequality.

\bigskip\noindent Let $Y$ be a locally convex space. Equip $Y^2$ with the metric \[ d_{Y^2}^2((x_1,y_2);(x_2,y_2))= d_Y^2(x_1,x_2)+d_Y^2(y_1,y_2).\] The product space $Y^2$ remains a locally convex space -- and nonpositively curved in case $Y$ is nonpositively curved. Set \[ \diaco Y= \{\gamma:[0,1]\to Y:\gamma\textrm{ a constant speed local geodesic} \}. \] With metric $\displaystyle d_\infty(\alpha,\beta)=\max_{0\le t\le 1}d_Y(\alpha(t),\beta(t))$ the map \[ \phi:\diaco Y\to Y\times Y, \ p(\gamma)=(\gamma(0),\gamma(1)) \] is a local bilipschitz map.

Suppose $\alpha,\beta$ are two local geodesics with $d_\infty(\alpha,\beta)<\varepsilon$ where $\varepsilon>0$ is such that $B_Y(\alpha(t),2\varepsilon)$ is Busemann convex for all $t\in [0,1]$. Then $t\mapsto d_Y(\alpha(t),\beta(t))$ is convex following the remark after Lemma \ref{hololemmar}. (In particular if $\alpha$ and $\beta$ agree at $0$ and $1$ and $d_\infty(\alpha,\beta)<\varepsilon$ the convexity of the distance function implies $\alpha=\beta$.)

This implies $$ d_\infty(\alpha,\beta)\le \max\{d_Y(\alpha(0),\beta(0)); d_Y(\alpha(1),\beta(1))\}\le  d_{Y^2}(p(\alpha),p(\beta))$$ while the estimate $$d_{Y^2}(p(\alpha),p(\beta))\le \sqrt 2d_\infty(\alpha,\beta)$$ holds always. Therefore $p$ restricted to $B_\infty(\gamma, \varepsilon)$ is a $\sqrt 2$-bilipschitz map $B_\infty(\gamma, \varepsilon)\to p(B_\infty(\gamma, \varepsilon))$.

\bigskip\noindent We may pull back the length metric from $Y\times Y$ to obtain a unique length metric $d_{\widehat Y}$ on $\diaco Y$ such that $\phi:(\diaco Y, d_{\widehat Y})\to (Y\times Y,d_{Y^2})$ is a local isometry. (This metric is given by $\displaystyle d_{\widehat Y}(\alpha,\beta):=\inf_h \ell(p\circ h)$ where the infimum is taken over all the paths $h$ in $\diaco Y$ joining $\alpha$ and $\beta$.)

In particular we have the following Lemma, which is a kind of quantitative version of the local isometry of $\phi$.

\begin{lemma}
Let $\sigma\in \diaco Y$ and $\varepsilon>0$ be such that $B_Y(\sigma(t),2\varepsilon)$ is convex for all $t\in [0,1]$. Then $\phi:B_{\diaco Y}(\sigma,\varepsilon)\to B_{Y^2}(\phi(\sigma),\varepsilon)$ is a surjective isometry.
\begin{proof}
First note that $B_{Y^2}(\phi(\sigma),\varepsilon)\subset B_Y(\sigma(0),\varepsilon)\times B_Y(\sigma(1),\varepsilon)$ is a Busemann convex neighbourhood of $\phi(\sigma)$. By the general theory $\phi$ is a $1$-Lipschitz map: $$d_{Y^2}(\phi(\alpha),\phi(\beta))\le d_{\widehat Y}(\alpha,\beta).$$ Next let us show that if $\gamma\in B_{\diaco Y}(\sigma,\varepsilon)$ then $t\mapsto d_Y(\sigma(t),\gamma(t))$ is convex.

Suppose $d_{\widehat Y}(\sigma,\gamma)<\varepsilon$ and take a path $h$ in $B_{\diaco Y}(\sigma,\varepsilon)$ joining $\sigma$ and $\gamma$. Set $$U=\{ t\in [0,1]: s\mapsto d_{Y}(\sigma(s),h_t(s))\textrm{ is convex} \}.$$ This set is nonempty and closed. Let us show it is also open. If $t_0\in U$ take $\delta$ so small that $$d_\infty(h_t,h_{t_0})<\varepsilon-d_{\widehat Y}(\sigma,h_{t_0})$$ whenever $|t-t_0|<\delta$. Since $d_Y(\sigma,h_{t_0})$ is convex we have the estimate $d_\infty(\sigma,h_{t_0})\le d_{Y^2}(\phi(\sigma),\phi(h_{t_0}))$ and so $$ d_\infty(\sigma,h_t)\le d_\infty(\sigma,h_{t_0})+d_\infty(h_{t_0},h_t)<d_{Y^2}(\phi(\sigma),\phi(h_{t_0}))+\varepsilon- d_{\widehat Y}(\sigma,h_{t_0})<\varepsilon. $$ Consequently $s\mapsto d_Y(\sigma(s),h_t(s))$ is convex. Thus $U$ is open whence $U=[0,1]$, and therefore $d_Y(\sigma,h_1)=d_Y(\sigma,\gamma)$ is convex.

Conversely given any pair $(x,y)\in B_{Y^2}(\phi(\sigma), \varepsilon)$ Lemma \ref{hololemma} yields a unique local geodesic $\gamma\in \widehat Y$ joining $x$ and $y$ such that $t\mapsto d_Y(\sigma(t),\gamma(t))$ is convex. We conclude that $p:B_{\widehat Y}(\sigma,\varepsilon)\to B_{Y^2}(\phi(\sigma),\varepsilon)$ is a bijective $1$-Lipschitz map.

Given $\alpha,\beta\in B_{\diaco Y}(\sigma,\varepsilon)$ take a path $\gamma=(\gamma_0,\gamma_1)$ in $B_{Y^2}(\phi(\sigma),\varepsilon)$ joining $\phi(\alpha)$ and $\phi(\beta)$ and lift it to a path $h$ in $Y\times Y$ in the following way. For each $t$ let $h_t$ be the unique local geodesic joining  $\gamma_0(t)$ and $\gamma_1(t)$ with $s\mapsto d_Y(\sigma(s),h_t(s))$ convex, given by Lemma \ref{hololemma}. Then $h$ is a lift of $\gamma$ joining $\alpha$ and $\beta$ (by the uniqueness) and $d_{\widehat Y}(\alpha,\beta)\le \ell(\phi\circ h)=\ell(\gamma)$. Taking infimum over $\gamma$ we obtain $$d_{\widehat Y}(\alpha,\beta)\le d_{Y^2}(\phi(\alpha),\phi(\beta)).$$ This finishes the proof.
\end{proof}
\end{lemma}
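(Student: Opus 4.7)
The strategy is to first establish the easy $1$-Lipschitz direction for $p$ globally, and then prove the reverse inequality locally using a connectedness argument to transfer convexity of the distance function $t \mapsto d_Y(\sigma(t),\gamma(t))$ from $\sigma$ to nearby points in $B_{\hat Y}(\sigma,\varepsilon)$. Surjectivity and the full isometry between arbitrary pairs in the ball are then obtained by lifting paths through Lemma \ref{hololemma}.

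First I would note that $p$ is always $1$-Lipschitz: for any path $h$ in $\hat Y$ joining $\alpha,\beta$ one has $\ell(p\circ h) \ge d_{Y^2}(p(\alpha),p(\beta))$, so taking the infimum gives $d_{Y^2}(p(\alpha),p(\beta)) \le d_{\hat Y}(\alpha,\beta)$. The core claim is then: if $\gamma \in B_{\hat Y}(\sigma,\varepsilon)$, the function $t\mapsto d_Y(\sigma(t),\gamma(t))$ is convex on $[0,1]$. Granted this, the remark after Lemma \ref{hololemmar} yields $d_\infty(\sigma,\gamma) \le \max\{d_Y(\sigma(0),\gamma(0)),d_Y(\sigma(1),\gamma(1))\} \le d_{Y^2}(p(\sigma),p(\gamma))$, so applied with $\sigma$ and $\gamma$ replaced by $\alpha,\beta\in B_{\hat Y}(\sigma,\varepsilon)$ this would give the reverse Lipschitz inequality for the pair centered at $\sigma$.

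To prove convexity, I would fix a path $h\colon[0,1]\to \hat Y$ from $\sigma$ to $\gamma$ lying inside $B_{\hat Y}(\sigma,\varepsilon)$ and consider the set
\[ U = \{t\in[0,1] : s\mapsto d_Y(\sigma(s),h_t(s)) \text{ is convex}\}. \]
It is nonempty ($0\in U$) and closed by continuity of $h$. For openness at $t_0 \in U$, the already-known convexity implies $d_\infty(\sigma,h_{t_0}) \le d_{Y^2}(p(\sigma),p(h_{t_0})) \le d_{\hat Y}(\sigma,h_{t_0})$. Choose $\delta>0$ with $d_\infty(h_{t_0},h_t) < \varepsilon - d_{\hat Y}(\sigma,h_{t_0})$ whenever $|t-t_0|<\delta$; then the triangle inequality gives $d_\infty(\sigma,h_t) < \varepsilon$, and the remark after Lemma \ref{hololemmar} forces $s\mapsto d_Y(\sigma(s),h_t(s))$ to be convex. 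Hence $U=[0,1]$, in particular $1\in U$.

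For surjectivity onto $B_{Y^2}(p(\sigma),\varepsilon)$, given $(x,y)$ in the target ball, Lemma \ref{hololemma} provides the local geodesic $\gamma$ joining $x,y$ with $s\mapsto d_Y(\sigma(s),\gamma(s))$ convex, and lifting the straight segment from $p(\sigma)$ to $(x,y)$ in the Busemann-convex ball $B_{Y^2}(p(\sigma),\varepsilon)$ through Lemma \ref{hololemma} places $\gamma$ inside $B_{\hat Y}(\sigma,\varepsilon)$. Finally, to upgrade the pointwise isometry property to a full isometry between any $\alpha,\beta \in B_{\hat Y}(\sigma,\varepsilon)$, I would lift an arbitrary path $\gamma=(\gamma_0,\gamma_1)$ from $p(\alpha)$ to $p(\beta)$ inside $B_{Y^2}(p(\sigma),\varepsilon)$ by applying Lemma \ref{hololemma} parameterwise relative to $\sigma$, obtaining a path $h$ in $\hat Y$ from $\alpha$ to $\beta$ with $\ell(p\circ h) = \ell(\gamma)$; taking the infimum over $\gamma$ yields $d_{\hat Y}(\alpha,\beta)\le d_{Y^2}(p(\alpha),p(\beta))$.

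The main obstacle I expect is the openness step in the connectedness argument: one has pointwise ($d_\infty$) control only for those $t$ where convexity is already known, but the hypothesis $d_{\hat Y}(\sigma,h_t)<\varepsilon$ a priori does not imply $d_\infty(\sigma,h_t)<\varepsilon$. The trick is exactly that the already-proven convexity at $t_0$ collapses $d_{\hat Y}$ to $d_\infty$ there, and then the $d_\infty$-continuity of $h$ propagates the bound to nearby $t$, so one must be careful to apply each estimate in the correct metric.
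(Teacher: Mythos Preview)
Your proposal is correct and follows essentially the same route as the paper's proof: the $1$-Lipschitz direction is immediate, the key convexity claim for $\gamma\in B_{\hat Y}(\sigma,\varepsilon)$ is established via the same connectedness argument on the set $U$, surjectivity comes from Lemma \ref{hololemma}, and the reverse inequality for arbitrary $\alpha,\beta$ is obtained by lifting paths in $B_{Y^2}(p(\sigma),\varepsilon)$ parameterwise. Your explicit remark that lifting the geodesic segment from $p(\sigma)$ to $(x,y)$ places the resulting $\gamma$ inside $B_{\hat Y}(\sigma,\varepsilon)$ is a small clarification the paper leaves implicit, and your closing comment correctly identifies the crux of the openness step.
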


\bigskip\noindent Since $Y^2$ is locally convex (nonpositively curved) it follows that $\diaco Y$ is locally convex (nonpositively curved) and, by \cite[Proposition I.3.28]{bri99} $\phi$ is a covering map.

\bigskip\noindent If $Y$ is locally compact it follows from the Hopf-Rinow theorem that $\diaco Y$ is a complete, proper geodesic space. In the event that $\alpha,\beta\in \tilde Y_q$ (see the discussion after Theorem \ref{universal}) we have $$d_{\widehat Y}(\alpha,\beta)\le d_q(\alpha,\beta).$$ Indeed the identity map $\iota: (\tilde Y_q,d_q)\to (\tilde Y_q, d_{\widehat Y})$ is a local isometry: for every $\alpha\in \tilde Y_q$ the restriction $\iota|_{B_q(\alpha,\varepsilon)}$ is a surjective isometry whenever $\varepsilon >0$ is such that  $B_Y(\alpha(t),\varepsilon)$ is a convex neighbourhood for all $t\in [0,1]$.

\bigskip\noindent A fact we shall use is that, for $\alpha,\beta\in \tilde Y_q$ the distance in the $d_q$ metric is given by $$d_q(\alpha,\beta)=\ell(\langle \alpha\beta^{-1}\rangle),$$ where $\langle \alpha\beta^{-1}\rangle$ denotes the unique local geodesic homotopic to $\alpha\beta^{-1}$. Define $s\mapsto h_s:[0,1]\to \tilde Y_q$ by
\begin{align*}
&h_s (t)=\left\{
\begin{array}{ll}
\beta((1-2s)t) & 0\le s\le 1/2, \\
\alpha((2s-1)t) & 1/2\le s\le 1
\end{array}
\right.
\end{align*}
The path $h$ is the lift of $\alpha\beta^{-1}$ starting at $\beta$ (and ending at $\alpha$). Since $\tilde Y_q$ is simply connected $h$ is homotopic to the unique geodesic $\gamma$ between $\beta$ and $\alpha$. Consequently $\alpha\beta^{-1}$ is homotopic to the local geodesic $p_q\circ\gamma$. By Proposition \ref{unigeo} (uniqueness of local geodesic in the homotopy class of $\alpha\beta^{-1}$) we have $p_q\circ\gamma=\langle \alpha\beta^{-1}\rangle$ and thus $$ d_q(\alpha,\beta)=\ell(\gamma)=\ell(p_q\circ\gamma)=\ell(\langle \alpha\beta^{-1}\rangle).$$

\subsection{Lifts}

\begin{definition}\label{lift}
Let $H:u\simeq v$ be a locally geodesic $p$-quasihomotopy between two maps $u,v\in \Di pXY$. The \emph{lift} $\hat H$ of $H$ is the map $\hat H: X\to \hat Y$ given by mapping $x\in X$ to the local geodesic path $\left(t\mapsto H_t(x)\right)\in \hat Y$.
\end{definition}

\noindent The covering map $p: \hat Y\to Y^2$ also induces a map $p:\Di pX{\hat Y}\to \Di pXY ^2$, \[ p F (x)= (F_0(x),F_1(x)). \] The fact that each component $F_0,F_1\in \Di pXY$ follows from the fact that $p$ is a Lipschitz map. Note that, if $H:u\simeq v$ is a locally geodesic $p$-quasihomotopy and $\hat H$ its lift, the identity $p\circ \hat H=(u,v)$ holds.

\begin{proposition}\label{target}
Let $H:u\simeq v$ be as in Definition \ref{lift}. Then $\hat H\in \Di pX{\hat Y}$ with \[1/2\ (g_u+g_v)\le g_{\hat H}\le g_u+g_v. \]
\begin{proof}
As in the proof of Theorem \ref{di} let us take a sequence $E_m\supset E_{m+1}$ of open sets in $X$ with $\Cp _p(E_m)<2^{-m}$ and $H|_{X\setminus E_m\times [0,1]}$ continuous homotopy between $u|_{X\setminus E_m}$ and $v|_{X\setminus E_m}$, and a path family $\Gamma$ with $\Mod_p(\Gamma)=0$ so that whenever $\gamma\notin \Gamma$,
\begin{enumerate}
\item there exists $m_0$ so that $\gamma^{-1}(E_{m_0})=\varnothing$ (and consequently $\gamma^{-1}(E_m)=\varnothing$ for every $m\ge m_0$ )
\item the inequalities
\begin{align*}
d_Y(u(\gamma(b)),u(\gamma(a)))\le \int_{\gamma|_{[a,b]}}g_u \\
d_Y(v(\gamma(b)),v(\gamma(a)))\le \int_{\gamma|_{[a,b]}}g_v
\end{align*}
hold for $a,b\in [0,1]$.
\end{enumerate}
Let $\gamma \notin \Gamma$ and let $K\subset X\setminus E_{m_0}$ be a compact set containing the image of $\gamma$. Since $H|_{K\times [0,1]}$ is uniformly continuous there is some $\varepsilon >0$ so that $B(z,2\varepsilon)\subset Y$ is a convex ball for all $z\in H(K\times [0,1])$ (the image being a compact set). By the uniform continuity of $H|_{K\times [0,1]}$ there is $\delta>0$ so that whenever $a,b\in [0,1]$ are such that $|a-b|<\delta$ we have $d_Y(H_t(\gamma(b)),H_t(\gamma(a)))<\varepsilon$ for all $t\in [0,1]$. It follows that \[ d_{\hat Y}(\hat H(\gamma(b)),\hat H(\gamma(a)))=d_{Y^2}(p\circ \hat H(\gamma(b)),p\circ \hat H(\gamma(a))) \] for $a,b\in [0,1]$ with $|a-b|<\delta$. On the other hand 
\begin{align*}
&\max\{d_Y(u(\gamma(b)),u(\gamma(a))),d_Y(v(\gamma(b)),v(\gamma(a)))\}\\
&\le d_{Y^2}(p\circ \hat H(\gamma(b)),p\circ \hat H(\gamma(b))) \\
&\le d_Y(u(\gamma(b)),u(\gamma(a)))+d_Y(v(\gamma(b)),v(\gamma(a))).
\end{align*}
From this we see, as in the proof of Theorem \ref{di}, that $g_{\hat H}\le g_u+g_v$. To arrive at the other inequality note that by the leftmost inequality above, any $p$-weak upper gradient for $\hat H$ is also a $p$-weak upper gradient for both $u$ and $v$. Thus $g_u\le g_{\hat H}$ and $g_v\le g_{\hat H}$ almost everywhere, from which we have $$1/2\ (g_u+g_v)\le g_{\hat H}.$$
\end{proof}
\end{proposition}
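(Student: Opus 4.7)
The strategy is to establish the two inequalities separately. The lower bound follows essentially for free from the fact that $p\colon \hat Y\to Y^2$ is $1$-Lipschitz, while the upper bound requires the usual $p$-quasicontinuity localization argument, exploiting that $p$ becomes a surjective isometry when restricted to sufficiently small balls.

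For the lower bound $\tfrac{1}{2}(g_u+g_v)\le g_{\hat H}$, I would argue as follows. The coordinate projections $\pi_i\colon Y^2\to Y$ $(i=0,1)$ are $1$-Lipschitz, and by construction $u=\pi_0\circ p\circ \hat H$, $v=\pi_1\circ p\circ \hat H$. Since the preceding lemma together with the general theory gives $d_{Y^2}(p(\alpha),p(\beta))\le d_{\hat Y}(\alpha,\beta)$, the composition $\pi_i\circ p$ is $1$-Lipschitz. Therefore any $p$-integrable $p$-weak upper gradient $g$ of $\hat H$ is also a $p$-weak upper gradient of both $u$ and $v$, and minimality forces $g_u\le g_{\hat H}$ and $g_v\le g_{\hat H}$ almost everywhere. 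Adding these gives the desired bound.

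For the upper bound $g_{\hat H}\le g_u+g_v$, I would mimic the scheme used in the proof of Theorem \ref{di}. First, pick a decreasing sequence of open sets $E_m\supset E_{m+1}$ with $\Cp_p(E_m)<2^{-m}$ such that $H|_{(X\setminus E_m)\times[0,1]}$ is continuous, and let $\Gamma$ be the union of the exceptional family from Lemma \ref{refinement} (applied to the $E_m$) with the curve families violating the upper gradient inequality for $u$ or $v$. For $\gamma\notin\Gamma$, fix $m_0$ with $\gamma^{-1}(E_{m_0})=\varnothing$ and a compact set $K\subset X\setminus E_{m_0}$ containing $|\gamma|$. Since $H(K\times[0,1])$ is compact in $Y$ and $H|_{K\times[0,1]}$ is uniformly continuous, I can choose $\varepsilon>0$ so that $B_Y(H_t(x),2\varepsilon)$ is Busemann convex for every $(x,t)\in K\times[0,1]$, and then $\delta>0$ so that $d_\infty(\hat H(\gamma(a)),\hat H(\gamma(b)))<\varepsilon$ whenever $|a-b|<\delta$. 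For such pairs, the preceding lemma identifies the $\hat Y$-distance with the $Y^2$-distance, so
\begin{align*}
d_{\hat Y}(\hat H(\gamma(a)),\hat H(\gamma(b)))
&= d_{Y^2}\bigl((u(\gamma(a)),v(\gamma(a))),(u(\gamma(b)),v(\gamma(b)))\bigr)\\
&\le d_Y(u(\gamma(a)),u(\gamma(b)))+d_Y(v(\gamma(a)),v(\gamma(b)))\\
&\le \int_{\gamma|_{[a,b]}}(g_u+g_v).
\end{align*}
Partitioning $[0,1]$ into subintervals of length less than $\delta$ and summing telescopically then yields $d_{\hat Y}(\hat H(\gamma(0)),\hat H(\gamma(1)))\le \int_\gamma(g_u+g_v)$, so $g_u+g_v$ is a $p$-weak upper gradient of $\hat H$.

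The main obstacle is this localization step: the identification $d_{\hat Y}=d_{Y^2}$ only holds in sufficiently small balls, and the choices of $\varepsilon$ and $\delta$ must be made uniformly along $\gamma$. Compactness of $K$ together with the uniform continuity of $H|_{K\times[0,1]}$ is exactly what makes this possible. The remaining ingredients—the capacity estimate via Lemma \ref{refinement}, the elementary bound $\sqrt{a^2+b^2}\le a+b$, and the telescoping partition—are routine. Measurability of $\hat H$, needed to finally conclude $\hat H\in \Di pX{\hat Y}$, can be read off from Theorem \ref{rmk} applied to $x\mapsto \|\hat H(x)-e\|_{\ell^\infty(\hat Y)}$ for a fixed basepoint $e$, once a $p$-integrable upper gradient is in hand.
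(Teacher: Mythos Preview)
Your proposal is correct and follows essentially the same route as the paper: the same exceptional curve family, the same uniform-continuity/compactness localization to identify $d_{\hat Y}$ with $d_{Y^2}$ on short subarcs, and the same telescoping partition for the upper bound. For the lower bound you take a slightly cleaner path, invoking the global $1$-Lipschitz property of $\pi_i\circ p$ rather than the paper's local $\max$-inequality, but this is a cosmetic difference and the argument is otherwise identical.
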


\noindent Using the map $p$ introduced after Definition \ref{lift} we also have a converse result.
\begin{proposition}\label{converse}
Suppose $F\in \Di pX{\hat Y}$ and $p\circ F=(u,v)$. Then $H(x,t)=F_t(x)$ defines a locally geodesic $p$-quasihomotopy $H:u\simeq v$.
\begin{proof}
By definition for $p$-quasievery $x\in X$ the path $t\mapsto H_t(x)=F_t(x)$ is a local geodesic. Suppose $\varepsilon >0$ is given, and let $E\subset X$ be an open set with $\Cp_p(E)<\varepsilon$ so that $F|_{X\setminus E}$ is continuous. We claim that $H|_{X\setminus E\times [0,1]}$ is a continuous homotopy between $u|_{X\setminus E}$ and $v|_{X\setminus E}$.

From the fact that $p\circ F (x)=(u(x),v(x))$ it is clear that $H|_{X\setminus E\times [0,1]}$ connects $u|_{X\setminus E}$ and $v|_{X\setminus E}$. To see continuity let $(x,t)\in X\setminus E\times [0,1]$ and $\delta>0$ be arbitrary. Choose $\delta_0< \delta$ so that $p:B(F(x),\delta_0)\to B(p\circ F(x),\delta_0)$ is an isometry and, moreover, $B_Y(F_t(x),2\delta_0)$ is a convex ball in $Y$ for every $t\in [0,1]$. By the continuity of $F|_{X\setminus E}$ we find $r>0$ so that $d_{\hat Y}(F(x),F(y))<\delta_0$ whenever $y\in B(x,r)\setminus E$. These choices ensure that the distance function $$ t\mapsto d_Y(F_t(x),F_t(y))$$ is convex (see remark after Lemma \ref{hololemmar}), in particular $$d_\infty (F(x),F(y))\le d_{Y^2}(p\circ F(x),p\circ F(y)). $$ Let us use this to estimate
\begin{align*}
d_Y(H(x,t), H(y,s))& \le d_Y(H(x,t), H(x,s))+d_Y(H(x,s), H(y,s))\\
\le |t-s|\ell(F(x))&+ d_\infty (F(x),F(y))\\
\le |t-s|\ell(F(x))&+ d_{Y^2}(p\circ F(x),p\circ F(y))\\
=|t-s|\ell(F(x))&+ d_{\hat Y}(F(x),F(y))<|t-s|\ell(F(x))+\delta.
\end{align*}
Therefore whenever $(y,s)\in B(x,r)\setminus E\times B(t,\delta/\ell(F(x)))$ we have $$d_Y(H(x,t), H(y,s))<2\delta.$$ Since $\delta>0$ was arbitrary we have the desired continuity.
\end{proof}
\end{proposition}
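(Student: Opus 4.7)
The plan is to verify the two requirements of a locally geodesic $p$-quasihomotopy separately: pointwise that each path $t\mapsto H(x,t)$ is a local geodesic from $u(x)$ to $v(x)$, and that the definition of $p$-quasihomotopy holds. The pointwise part is essentially free: for $p$-quasievery $x$, the value $F(x)$ is by construction an element of $\hat Y$, hence a constant speed local geodesic in $Y$ on $[0,1]$, and the identity $p\circ F=(u,v)$ forces its endpoints to be $u(x)$ and $v(x)$. So $H$ is automatically pointwise locally geodesic with the right boundary values.

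For the $p$-quasihomotopy property, given $\varepsilon>0$, I would use the $p$-quasicontinuity of $F$ (which holds since $F\in \Di pX{\hat Y}$) to produce an open set $E\subset X$ with $\Cp_p(E)<\varepsilon$ such that $F|_{X\setminus E}$ is continuous into $\hat Y$. The task then reduces to showing joint continuity of $H$ on $(X\setminus E)\times[0,1]$. Fix $(x_0,t_0)$ and split
\[ d_Y(H(y,s),H(x_0,t_0))\le d_Y(F_s(y),F_s(x_0))+d_Y(F_s(x_0),F_{t_0}(x_0)). \]
The second term is controlled by $|s-t_0|\,\ell(F(x_0))$ because $F(x_0)$ is a constant speed local geodesic. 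The first term is at most $d_\infty(F(y),F(x_0))$, and the key is to tie this back to $d_{\hat Y}(F(y),F(x_0))$, which is the quantity controlled by continuity of $F|_{X\setminus E}$.

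This is exactly where the local structure of $\hat Y$ is used. The set $\{F_t(x_0):t\in[0,1]\}$ is compact (as the image of a continuous map on $[0,1]$), so by local convexity of $Y$ I can choose $\delta_0>0$ so small that $B_Y(F_t(x_0),2\delta_0)$ is Busemann convex for every $t\in[0,1]$ and simultaneously $p:B_{\hat Y}(F(x_0),\delta_0)\to B_{Y^2}(pF(x_0),\delta_0)$ is a surjective isometry (by the lemma stating that $p$ is a local isometry onto convex balls). Then continuity of $F|_{X\setminus E}$ yields $r>0$ such that $d_{\hat Y}(F(x_0),F(y))<\delta_0$ for $y\in B(x_0,r)\setminus E$; the convexity remark after Lemma \ref{hololemmar} (applied to the two local geodesics $F(x_0),F(y)$, which stay within $\delta_0$) gives $d_\infty(F(x_0),F(y))\le d_{Y^2}(pF(x_0),pF(y))=d_{\hat Y}(F(x_0),F(y))$. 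Combining everything, $d_Y(H(y,s),H(x_0,t_0))$ is small whenever $y\in B(x_0,r)\setminus E$ and $|s-t_0|$ is small, giving the desired continuity.

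The main technical step, and the only nontrivial one, is the passage from control of $d_{\hat Y}$ to control of $d_\infty$: without it one only obtains continuity of the endpoint maps $F_0=u$ and $F_1=v$, not of $H(y,s)$ for interior $s$. Establishing it rests on two inputs from the earlier material, namely that $p$ is a local isometry onto convex Busemann balls, and that along the compact arc $F(x_0)$ a uniform convexity radius $\delta_0$ exists. Once that estimate is in place, the rest is arithmetic, and since $\varepsilon>0$ was arbitrary, $H$ is a locally geodesic $p$-quasihomotopy $u\simeq v$.
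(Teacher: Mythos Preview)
Your proof is correct and follows essentially the same route as the paper's: use $p$-quasicontinuity of $F$ to obtain the exceptional set $E$, split $d_Y(H(y,s),H(x_0,t_0))$ via the triangle inequality into a time-increment term controlled by $|s-t_0|\ell(F(x_0))$ and a space-increment term bounded by $d_\infty(F(x_0),F(y))$, and then pass from $d_{\hat Y}$-closeness to $d_\infty$-closeness using the local isometry of $p$ and the convexity of $t\mapsto d_Y(F_t(x_0),F_t(y))$. Your identification of the latter step as the only nontrivial one, and your justification of it via the lemma that $p$ is a surjective isometry on small balls, is exactly what the paper does (indeed slightly more precise than the paper's appeal to the remark after Lemma~\ref{hololemmar}, since that remark literally assumes $d_\infty$-closeness rather than $d_{\hat Y}$-closeness).
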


\begin{remark}\label{repath}
Propositions \ref{target} and \ref{converse} prove Theorem \ref{ppath}; they demonstrate a one-to-one correspondence between locally geodesic $p$-quasihomotopies between maps that are in $\Di pXY$, and elements in $\Di pX{\hat Y}$. Any locally geodesic $p$-quasihomotopy $H$ lifts to a map $\hat H\in \Di pX{\hat Y}$ and, conversely, any map $F\in \Di pX{\hat Y}$ yields a locally geodesic $p$-quasihomotopy.
\end{remark}

\section{The manifold case} In this section we take a look at the situation when the spaces $X,Y$ are compact Riemannian manifolds. We shall adopt the notation $M$ for the domain manifold, and $N$ for the target.

We begin by recalling some definitions relevant to the manifold setting.

\subsection{$([p]-1)$-homotopy}
Let $M$ be an $m$-dimensional Riemannian manifold, $P$ an $a$-dimensional \emph{parameter space} (also a Riemannian manifold) and $D\subset Q$ a domain with compact closure and Lipschitz boundary in a Riemannian manifold $Q$ of dimension $d$. Assume, moreover, that the dimensions satisfy $d+a\ge m$. 

Given a Lipschitz map $H:\overline D\times P\to M$ we denote by $H_\xi: \overline D\to M$ the map $H_\xi(x)=H(x,\xi)$. We further assume that $H$ satisfies

\begin{itemize}
\item[(H1)]\label{H1} $\LIP(H_\xi)\le c_0$ for all $\xi\in P$.
\item[(H2)]\label{H2} There exists a positive number $c_1$ so that the $m$-dimensional Jacobian $J_H$ satisfies $J_H(x,\xi)\ge c_1$ for $\Ha^{d+a}$-almost every $(x,\xi)\in\overline D\times P$.
\item[(H3)]\label{H3} There is a positive number $c_2$ so that $\Ha^{d+a-m}(H^{-1}(y))\le c_2$ for $\Ha^m$-almost every $y\in M$.
\end{itemize}

The following very useful lemma can be found in \cite[Lemma 3.3]{han03}.
\begin{lemma}
Suppose $H:\overline D\times P\to M$ satisfies (H1)-(H3). Then for any non-negative Borel function $g:M\to [0,\infty]$  we have \[ \int_P\int_Dg(H_\xi(x))\ud\Ha^d(x)\ud\Ha^a(\xi)\le c_1^{-1}c_2\int_Mg\ \ud\Ha^m. \]
\end{lemma}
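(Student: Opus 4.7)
The plan is to apply the coarea formula of Federer to the Lipschitz map $H:\overline D\times P\to M$, which maps a $(d+a)$-dimensional Riemannian manifold to an $m$-dimensional one with $d+a\ge m$. Because the source has dimension at least that of the target, the relevant formula reads
\[
\int_{\overline D\times P}f(x,\xi)J_H(x,\xi)\,\ud\Ha^{d+a}(x,\xi)=\int_M\int_{H^{-1}(y)}f\,\ud\Ha^{d+a-m}\,\ud\Ha^m(y)
\]
for every non-negative Borel function $f$ on $\overline D\times P$. This is the version of the coarea formula appropriate for Lipschitz maps between manifolds; a reference is \cite[Theorem 3.2.22]{Federer}-type statements, or the analogous statement in Hajlasz's survey on Sobolev spaces.

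The key step is to apply this formula with the test function
\[
f(x,\xi)=\frac{g(H(x,\xi))}{J_H(x,\xi)},
\]
which, thanks to hypothesis (H2), is well defined $\Ha^{d+a}$-almost everywhere (the set $\{J_H=0\}$ has measure zero in $\overline D\times P$ and may be ignored). Substituting yields
\[
\int_{\overline D\times P}g(H(x,\xi))\,\ud\Ha^{d+a}(x,\xi)=\int_M g(y)\int_{H^{-1}(y)}\frac{1}{J_H}\,\ud\Ha^{d+a-m}\,\ud\Ha^m(y).
\]
On the right-hand side I estimate the inner integral pointwise: by (H2), $1/J_H\le 1/c_1$ holds $\Ha^{d+a}$-a.e., and by (H3), the fibre $H^{-1}(y)$ has $(d+a-m)$-dimensional measure at most $c_2$ for $\Ha^m$-almost every $y\in M$. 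Hence $\int_{H^{-1}(y)}J_H^{-1}\,\ud\Ha^{d+a-m}\le c_1^{-1}c_2$ for almost every $y$.

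The final step is to rewrite the left-hand side using Fubini. On the product Riemannian manifold $\overline D\times P$ the measure $\Ha^{d+a}$ coincides with the product measure $\Ha^d\otimes\Ha^a$, so
\[
\int_{\overline D\times P}g(H(x,\xi))\,\ud\Ha^{d+a}(x,\xi)=\int_P\int_D g(H_\xi(x))\,\ud\Ha^d(x)\,\ud\Ha^a(\xi),
\]
and the stated inequality follows. I expect the only delicate point to be a clean invocation of the coarea formula in this ``oversized source'' setting (and the tacit use of the fact that the exceptional set where $J_H=0$ contributes nothing); otherwise the argument is a one-line computation once the coarea identity is set down.
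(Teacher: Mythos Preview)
Your argument is correct and is the standard one: the paper itself does not prove this lemma but simply quotes it from \cite[Lemma 3.3]{han01}, where the proof is exactly the coarea computation you give. The only point worth making explicit is that (H2) asserts $J_H\ge c_1$ only $\Ha^{d+a}$-almost everywhere, whereas you use $1/J_H\le c_1^{-1}$ inside the fibre integral $\int_{H^{-1}(y)}\cdot\,\ud\Ha^{d+a-m}$; this is justified because applying the coarea formula to $\chi_{\{J_H<c_1\}}$ shows that for $\Ha^m$-a.e.\ $y$ the exceptional set meets $H^{-1}(y)$ in a set of $\Ha^{d+a-m}$-measure zero.
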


\bigskip\noindent Let us assume that $1\le p\le m$ and consider a map $u\in \Nem pMN$. If $K$ is a rectilinear cell complex and $H:|K|\times P\to M$ is a map such that $H|_{\Delta\times P}$ satisfies (H1)-(H3) for every cell $\Delta \in K$. We have the following \cite[Lemma 4.3]{han03}.

\begin{lemma}
There exists a Borel set $E\subset P$ with $\Ha^a(E)=0$ such that for all $\xi\in P\setminus E$ we have $u\circ H_\xi\in \mathcal{W}^{1,p}(K;N)$. If $k\in \N$, $0\le k<p$ then the map $\chi=\chi_{k,H,u}: P\to [|K^k|;N]$ defined by \[ \chi(\xi)=[u\circ H_\xi|_{|K^k|}] \] is measurable in the sense that $\chi^{-1}\{ \alpha\}$ for any $\alpha \in [|K^k|;N]$.
\end{lemma}

Here $$|K|=\bigcup_{\underset{\dim\Delta =\dim K}{\Delta\in K}}\Delta$$ and $$K^k=\{\Delta\in K:\dim \Delta \le k \},$$ see also \cite[Section 3 and Section 4]{han03}.

\begin{lemma}(\cite[Lemma 4.7]{han03})\label{const}
If $P$ is further connected and $0\le k\le [p]-1$ then $\chi\equiv const.$ $\Ha^a$-almost everywhere on $P$.
\end{lemma}

\bigskip\noindent Let $\varepsilon_0>0$ be small and $V_{\varepsilon_0}(M)=\{ x\in \R^a: \dist(x,M)<\varepsilon_0 \}$ a tubular neighbourhood of $M$. Denote by $\pi:V_{\varepsilon_0}(M)\to M$ the nearest point projection, which is smooth given small enough $\varepsilon_0$. Given a rectilinear cell-decomposition $h:K\to M$ of $M$, we define
\begin{align*}\label{dhomotopy}
H:|K|\times B^a(\varepsilon_0)\to M, \quad H(x,\xi)=\pi(h(x)+\xi).
\end{align*}
Here $B^a(\varepsilon_0)=\{ \xi\in \R^a: |\xi|<\varepsilon_0 \}$. For any $\Delta \in K$ the map $H|_{\Delta\times B_a(\varepsilon_0)}$ satisfies (H1)-(H3) (with $P=B^a(\varepsilon_0)$ and $\overline D=\Delta$).

\bigskip\noindent Given a rectilinear cell decomposition $h:K\to M$ and $u\in \Wem pMN$, we may consider the constant map $\chi_{[p]-1,H,u}$. We denote this constant by $u_{\sharp, p}(h)$
\begin{definition}(\cite[Definition 4.1]{han03})
Two maps $u,v\in \Wem pMN$ are $([p]-1)$-homotopic if $u_{\sharp ,p}(h)=v_{\sharp ,p}(h)$ for any rectilinear cell decomposition $h:|K|\to M$.
\end{definition}

\bigskip\noindent After introducing the setting used in \cite{han03} we proceed with proving Theorem \ref{pquasi}. The following lemma establishes the measurability of a technical tool that will later be used in the proof.
\begin{lemma}
Suppose $H:\overline D \times P \to M$ satisfies (H1) - (H3) and let $E\subset M$ be open. Then the map $\xi\mapsto \Cp_p(H_\xi^{-1}E)$ is lower semicontinuous.
\begin{proof}
Let $\xi_k\to \xi$ as $k\to\infty$ and $x\in H_\xi^{-1}E$ (i.e. $H(x,\xi)\in E$). Since $E$ is open and $H$ continuous there are open neighbourhoods $U\times V\ni (x,\xi)$ so that $H(U\times V)\subset E$. In particular there exists $j$ so that $x\in H_{\xi_k}^{-1}E$ for all $k\ge j$. In other words we have $$ H_\xi^{-1}E\subset \bigcup_{j\ge 1} \bigcap_{k\ge j} H_{\xi_k}^{-1}E.$$ But from this we may estimate, using the properties of the $p$-capacity, \begin{align*}
\Cp_p(H_\xi^{-1}E)&\le \Cp_p\left(\bigcup_{j\ge 1} \bigcap_{k\ge j} H_{\xi_k}^{-1}E\right) = \lim_{j\to \infty} \Cp_p\left( \bigcap_{k\ge j} H_{\xi_k}^{-1}E\right)\\
&\le \liminf_{j\to \infty} \Cp_p(H_{\xi_j}^{-1}E)
\end{align*}
\end{proof}
\end{lemma}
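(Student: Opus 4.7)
The plan is to establish the inclusion
\[ H_\xi^{-1}E\subset \bigcup_{j\ge 1}\bigcap_{k\ge j} H_{\xi_k}^{-1}E \]
for any sequence $\xi_k\to\xi$, and then deduce the lower semicontinuity from the monotonicity and continuity from below of the $p$-capacity on increasing unions.

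First, I would fix a sequence $\xi_k\to\xi$ in $P$ and argue the set inclusion. Take $x\in H_\xi^{-1}E$, i.e.\ $H(x,\xi)\in E$. Since $H$ is continuous (being Lipschitz on $\overline D\times P$ by (H1), or at least continuous as assumed), and $E$ is open, there is a product neighbourhood $U\times V$ of $(x,\xi)$ with $H(U\times V)\subset E$. Since $\xi_k\to\xi$, there exists $j$ such that $\xi_k\in V$ for every $k\ge j$; hence $x\in H_{\xi_k}^{-1}E$ for every $k\ge j$, giving $x\in\bigcap_{k\ge j}H_{\xi_k}^{-1}E$ and in particular $x\in\bigcup_{j\ge 1}\bigcap_{k\ge j}H_{\xi_k}^{-1}E$.

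Next, I apply standard capacity estimates. The sets $A_j:=\bigcap_{k\ge j}H_{\xi_k}^{-1}E$ form an increasing sequence, so by the monotone convergence property of $\Cp_p$ on increasing unions (see, e.g.\ \cite[Theorem~6.7 (iii)]{bjo11}),
\[ \Cp_p\Bigl(\bigcup_{j\ge 1}A_j\Bigr)=\lim_{j\to\infty}\Cp_p(A_j). \]
Combined with the monotonicity estimate $\Cp_p(A_j)\le \Cp_p(H_{\xi_j}^{-1}E)$ and the previous inclusion, this yields
\[ \Cp_p(H_\xi^{-1}E)\le \lim_{j\to\infty}\Cp_p(A_j)\le \liminf_{j\to\infty}\Cp_p(H_{\xi_j}^{-1}E), \]
which is lower semicontinuity.

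The only substantive point is the monotone convergence of $\Cp_p$ on increasing unions of \emph{arbitrary} (not necessarily open) sets; this is a standard property of the Sobolev $p$-capacity and I would simply quote it. The conditions (H1)--(H3) play no role beyond ensuring that $H$ is continuous, which is the feature needed for the set-theoretic inclusion above.
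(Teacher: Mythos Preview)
Your proof is correct and essentially identical to the paper's own argument: the same set inclusion $H_\xi^{-1}E\subset\bigcup_{j\ge 1}\bigcap_{k\ge j}H_{\xi_k}^{-1}E$ is established via continuity of $H$ and openness of $E$, followed by monotonicity and continuity from below of $\Cp_p$. Your remark that (H1)--(H3) are used only to guarantee continuity is also accurate.
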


\subsection{$p$-Quasihomotopic maps are path-homotopic but not vice versa}

\bigskip\noindent The following simple counterexample demonstrates that path-homotopy need not imply $p$-quasihomotopy. Take $M=B^2$, the closed unit ball of the plane, and $N=S^1$. Consider the path- and $p-$quasihomotopy classes of the constant map $1$. 

If $H:1\simeq u$ is a $p$-quasihomotopy, $u\in \Nem p{B^2}{S^1}$, we may take the locally geodesic $p$-quasihomotopy and lift it to obtain a map $\tilde H\in \Nem pX{\diaco{S^1}}$ which has the property that $\phi_0\circ\tilde H=1$ quasieverywhere. Thus $\tilde H(x)\in \widetilde N_1$ for $p$-quasievery $x\in X$ and we may view $\tilde H$ as a map $\tilde H:X\to \widetilde N_1\simeq \R$.

It follows that if $u\in \Nem p{B^2}{S^1}$ is $p$-quasihomotopic to the constant map 1, then it admits a lift $\tilde H\in \Nem p{B^2}{\R}$. Conversely any lift $h\in \Nem p{B^2}{\R}$ of yields a $p$-quasihomotopy $H:1\simeq u$ through \[ H(x,t)=\exp{(2\pi i t\cdot h(x)}). \] (Note that $r\mapsto \exp{(2\pi i r)}$) is the covering map $\R\to S^1$.

Consequently the $p$-quasihomotopy class of the constant map consists precisely of those maps $u\in \Nem p{B^2}{S^1}$ which admit a lift $h\in \Nem p{B^2}{\R}$. However we know that not all maps have this property: for example one can consider the map $u(z)=z/|z|$ when $1<p<2$.

In contrast, if $1\le p<2$ then by \cite[Theorem 0.2]{bre01} the space $\Nem p{B^2}{S^1}$ is path connected. Thus the path-homotopy and $p$-homotopy classes do not always agree.


\bigskip\noindent In contrast to the general case, where some curvature assumption on the target space is needed to pass from $p$-quasihomotopy to path-homotopy (cf. Theorem \ref{polku}), the manifold setting does not require such an assumption. This is the content of Theorem \ref{pquasi}

The proof of Theorem \ref{pquasi} will proceed by showing that $u$ and $v$ are $([p]-1)$-homotopic. It is based on the following lemma.

\begin{lemma}
There is a constant $c<\infty$, depending on the data of (H1) - (H3) and on $p$, so that if $E\subset M$ is open then 
\begin{equation}\label{capslice}
\int_P\Cp_p(H_\xi^{-1}E)\ud\Ha^a(\xi) \le c \Cp_p(E)
\end{equation}
\begin{proof}
Suppose $u\in \Ne pM$ is non-negative with $u|_E\ge 1$. Then for $\Ha^a$-almost every $\xi\in P$, $u\circ H_\xi \in \Ne p{\overline D}$ is non-negative and $u\circ H_\xi|_{H_\xi^{-1}E}\ge 1$, whence $$\Cp_p(H_\xi^{-1}E)\le \|u\circ H_\xi\|_{\Ne p{\overline D}}^p.$$ Note that $$ |\nabla(u\circ H_\xi)|\le \|DH_\xi\||\nabla u(H_\xi)|\le c_0|\nabla u(H_\xi)|$$ almost everywhere. Integrating over $P$ and using this estimate we have
\begin{align*}
\int_P \Cp_p(H_\xi^{-1}E)\ud\Ha^a(\xi)&\le c_0^{p}\int_P\int_{\overline D} (|u(H_\xi(x))|^p+|\nabla u(H_\xi(x))|^p\ud\Ha^d(x)\ud\Ha^a(\xi) \\
&\le c_0^{p}c_1^{-1}c_2 \int_M(|u|^p+|\nabla u|^p)\ud\Ha^m.
\end{align*}
Taking infimum over admissible $u$ gives the claim with $c= c_0^{p}c_1^{-1}c_2$.
\end{proof}
\end{lemma}

\begin{proof}[Proof of Theorem \ref{pquasi}]
Let $F:u\simeq v$ be a $p$-quasihomotopy and let $E_j$ be the open sets such that $\Cp_p(E_j)<1/j$ and $H|_{M\setminus E_j\times [0,1]}$ is a classical homotopy $u|_{M\setminus E_j}\simeq v|_{M\setminus E_j}$. Fix a rectilinear cell decomposition $h:K\to M$ and set $H(x,\xi)=\pi(h(x)+\xi)$. Then for any $\Delta \in K^{[p]-1}$ the restriction of $H$ to $\Delta \times B_a(\varepsilon_0)$ satisfies (H1) - (H3) and by Lemma \ref{capslice} \[ \int_{B_a(\varepsilon_0)}\Cp_p((H_\xi|_{\Delta})^{-1}E_j)\ud\Ha^a(\xi)\le c\Cp_p(E_j). \] Denote $$Z=\{ \xi\in B_a(\varepsilon_0): (H_\xi|_{\Delta})^{-1}E_j\neq \varnothing \textrm{ for all } j\}.$$ We have 
\begin{align*}
\int_{B_a(\varepsilon_0)} \liminf_{j\to\infty}\Cp_p((H_\xi|_{\Delta})^{-1}E_j)\ud\Ha^a(\xi) \le &\liminf_{j\to\infty}\int_{B_a(\varepsilon_0)} \Cp_p((H_\xi|_{\Delta})^{-1}E_j)\ud\Ha^a(\xi)\\
\le &c\liminf_{j\to\infty}\Cp_p(E_j)=0.
\end{align*}
But because $\Cp_p(A)\ge c >0$ for any nonempty $A\subset \Delta$ (since $\dim\Delta \le d<p$) it follows that if $\xi \in Z$ then $\liminf_{j\to\infty}\Cp_p((H_\xi|_{\Delta})^{-1}E_j)>0$. This, however, can happen only on a set of $\Ha^a$-measure zero and so $\Ha^a(Z)=0$.

By this and \cite[Lemma 3.5]{han03} we have that for almost every $\xi$
\begin{itemize}
\item[(i)] $u\circ H_\xi\in \mathcal{W}^{1,p}(K;N)$ and
\item[(ii)]$(H_\xi|_{|K^d|})^{-1}E_j= \varnothing$ for some $j$,
\end{itemize} where $d= [p]-1$. For these $\xi$, the restriction $F\circ H_\xi|_{|K^d|\times [0,1]}$ is a homotopy between $u\circ H_\xi|_{|K^d|}$ and $v\circ H_\xi|_{|K^d|}$. Therefore $u_{\sharp, p}(h)=v_{\sharp, p}(h)$ and we are done.
\end{proof}

\bigskip\noindent We already saw in this subsection that path-homotopic maps need not be $p$-quasihomotopic. However if two maps can be connected by a rectifiable curve then they are $p$-quasihomotopic (Theorem \ref{rect}).

Let us close Section 4 with a proof of Proposition \ref{ohtatop}. The proof is essentially contained in \cite[Lemma 1]{chi07}. We sketch it here for completeness.

\begin{proof}[Proof of Proposition \ref{ohta}]
The first part of the claim is standard and can be found in \cite[Introduction]{haj14} and the references therein.

Since $M$ is compact the standard and Ohta topologies are given by a metric. It suffices to prove that a sequence converging in the Ohta metric also converges in the standard metric.

Consider the Nash embedding of $N$ into some $\R^l$ and recall that $$\Wem pMN=\{ u\in \Wem pM{\R^l}: u(x)\in N \textrm{ a.e. }x\in M \}.$$ Take a sequence $(u_j)\subset \Wem pMN$ converging to $u\in \Wem pMN$ in the Ohta metric. Then $u_j\to u$ in $L^p(M;N)$ and the sequence $(\nabla u_j)$ is bounded in $L^p(M;\R^l)$, hence $\nabla u_j \rightharpoonup \nabla u$ in $L^p(M;\R^l)$.

On the other hand convergence in the Ohta metric implies $$\int_M|\nabla u_j|^p\ud vol\rightarrow \int_M|\nabla u|^p\ud vol$$ as $j\to \infty$. The uniform convexity of $L^p(M;\R^l)$ yields that $$\|\nabla u_j-\nabla u\|_{L^p(M;\R^l)}\to 0$$ as $j\to \infty$. Thus $u_j\to u$ in the standard metric.
\end{proof}

\noindent The caveat here is that even though the Ohta metric gives the same topology it is \emph{not} in general a complete metric, see \cite[Lemma 2]{chi07}. The author thanks the anonymous referee for pointing this out.

\section{$p$-quasihomotopy classes of maps}In this section we always assume that $X$ is a complete space with a doubling measure $\mu$ supporting a weak $(1,p)$-Poincar\'e inequality, and that $Y$ is a complete locally convex metric space. Given a map $v\in \Di pXY$ we want to study the $p$-quasihomotopy class of $v$, denoted $[v]_p$. Ultimately, we are interested in its compactness properties since these are the key to proving existence of energy minimizing maps in a given $p$-quasihomotopy class.

A first observation is that \[ [v]_p=\{ F_1: F\in \Di pX{\diaco Y},\ F_0=v \}. \] This is easy to see using the one-to-one correspondence of $p$-quasihomotopies and maps in $\Di pX{\diaco Y}$ presented above. Let us set \[ H^v=\{ F\in \Di pX{\diaco Y}: F_0=v \}. \] Abusing notation slightly we denote by $\phi: H^v\to [v]_p$ the map $$F\mapsto \phi_1\circ F=F_1$$ induced by the covering map $\phi=(\phi_0,\phi_1):\diaco Y\to Y^2$ (since for $F\in H^v$ the first projection  $F_0=phi_0\circ F=v$ always holds we may disregard it).

\bigskip\noindent Let us introduce some notation. Given a $p$-quasihomotopy $H:u\simeq v$ we denote by $\langle H\rangle :u\simeq v$ the locally geodesic $p$-quasihomotopy associated to $H$, given by Theorem \ref{geodhom}. It is evident that, given two $p$-quasihomotopies $H:u\simeq v$ and $H':v\simeq w$ the conjunction $H'H:u\simeq w$ is a $p$-quasihomotopy, and we may consider the locally geodesic representative $\langle H'H\rangle$. We call this the product of $H'$ and $H'$. The \emph{inverse} $H^{-1}$ of a $p$-quasihomotopy $H: u\simeq v$ is simply the $p$-quasihomotopy $H^{-1}: v\simeq u$ given by $$H^{-1}(x,t)=H(x,1-t).$$

\bigskip\noindent Let $G_v$ denote the set of locally geodesic $p$-quasihomotopies $H:v\simeq v$. The product and inverse defined above turn $G_v$ into a group.



Furthermore the group acts on $H^v$ (from the right): given elements $\sigma\in G_v$ and $F\in H^v$ we set $F.\sigma=\langle F\sigma\rangle$. Indeed, the map $$(F,\sigma)\mapsto F.\sigma :H^v\times G_v\to H^v$$ defines a right group action on $H^v$. This is easily seen: $(F.1)^x=F^x$ for all $F\in H^v$ and $(F.(\sigma_2\sigma_1))^x=\langle F(\sigma_2\sigma_1)\rangle^x= \langle F^x\sigma_2^x\sigma^x\rangle=\langle (F^x\sigma_2^x)\sigma_1^x\rangle= \langle (F\sigma_2)\sigma_1\rangle^x=((F.\sigma_2).\sigma_1)^x$ for $p$-quasievery $x\in X$.

Pointwise, this is the action of $\pi_1(Y,v(x))$ on the universal covering space $\tilde Y_{v(x)}$ (for $p$-quasievery $x\in X$).
\begin{remark}\label{deck}
The group $G_v$ acts on $H^v$ by ``deck transformations'', i.e. \[ \phi\circ (F.\sigma)= \phi\circ F \] for $F\in H^v,\sigma\in G_v$. This is directly seen from the definitions.
\end{remark}
Next we demonstrate that the action of $G_v$ on $H^v$ is in fact both free and proper (in the sense of \cite[Chapter I.8, Definition 8.2]{bri99}). The following definition and lemma will prove useful.

\begin{definition}\label{quasi}
We say that a set $U\subset X$ is $p$-quasiopen ($p$-quasiclosed), or quasiopen (quasiclosed) for short, if, for every $\varepsilon >0$ there exists an open set $E\subset X$ with $\Cp _p(E)<\varepsilon$ so that $U\setminus E$ is open (closed) in $X\setminus E$.
\end{definition}

\begin{lemma}\label{quasidiscrete}
Suppose $X$ is compact, $f\in \Ne pX$ and the set $\{f=0 \}$ both quasiclosed and quasiopen. Then either $$ \Cp_p(\{f=0\})=0$$ or $$\Cp_p(X\setminus \{ f=0 \})=0.$$
\end{lemma}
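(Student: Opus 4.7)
The strategy is to show that the characteristic function $\chi_A$, where $A := \{f = 0\}$, admits $0$ as a $p$-weak upper gradient; the Poincar\'e inequality together with connectedness of $X$ then forces $\chi_A$ to be essentially constant, which in Newtonian terms gives the desired capacity dichotomy.

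First I would combine the quasiopen and quasiclosed hypotheses. Given open sets witnessing each property at level $2^{-n-2}$, their union $E_n$ is open with $\Cp_p(E_n) < 2^{-n}$, and $A \setminus E_n$ is simultaneously relatively open and relatively closed in $X \setminus E_n$. Apply Lemma \ref{refinement} to the sequence $(E_n)$: the family $\Gamma_\infty$ of curves meeting every $E_n$ has $\Mod_p(\Gamma_\infty) = 0$. For any rectifiable $\gamma \notin \Gamma_\infty$ there is $n_0$ with $|\gamma| \subset X \setminus E_{n_0}$; since $|\gamma|$ is connected and $X \setminus E_{n_0}$ decomposes into the two disjoint relatively clopen pieces $A \cap (X \setminus E_{n_0})$ and $(X \setminus A) \cap (X \setminus E_{n_0})$, the composition $\chi_A \circ \gamma$ is constant. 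Thus $g \equiv 0$ is a $p$-weak upper gradient of $\chi_A$.

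Next, $A$ is measurable (each $A \cup E_n$ is open, so $A$ is a $G_\delta$ modulo a capacity-zero set) and $\chi_A \in L^p(X)$ because $X$ is compact. Hence $\chi_A \in \Ne pX$ with $g_{\chi_A} = 0$. The Poincar\'e inequality applied to the pair $(\chi_A, 0)$ yields $\chi_A = (\chi_A)_B$ $\mu$-almost everywhere on every ball $B$; since $X$ is connected (a complete doubling PI space is quasiconvex, hence connected), a standard chain-of-balls argument makes $\chi_A$ equal to a single constant $c \in \{0,1\}$ $\mu$-almost everywhere.

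Finally, two Newtonian functions that agree $\mu$-a.e.\ agree $p$-quasieverywhere. Applying this to $\chi_A$ and the constant function $c$ gives $\Cp_p(\{\chi_A \neq c\}) = 0$: if $c = 0$ this is $\Cp_p(A) = 0$, while if $c = 1$ this is $\Cp_p(X \setminus A) = 0$. The main obstacle is the first step: Lemma \ref{refinement} is precisely what promotes the pointwise clopen structure modulo the sets $E_n$ into the statement that $\chi_A \circ \gamma$ is constant for $p$-almost every curve $\gamma$, which is the crux of the argument. The remaining ingredients---measurability of $A$, a.e.\ constancy from Poincar\'e and connectedness, and the a.e.-to-q.e.\ upgrade for Newtonian functions---are standard features of the theory.
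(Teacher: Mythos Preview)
Your proof is correct and takes a genuinely different route from the paper's. Both arguments hinge on Lemma \ref{refinement}, but they use it at different stages. The paper argues by contradiction: assuming $\Cp_p(A)>0$ and $\mu(X\setminus A)>0$, it selects density points $x\in A\setminus F$ and $y\in X\setminus(A\cup F)$, then invokes the equivalence of $p$-modulus and condenser capacity together with a Maz'ya-type Poincar\'e inequality (with capacity of the zero set in the denominator) to show that the family of curves joining small neighbourhoods of $x$ and $y$ has modulus bounded below by a positive constant. Lemma \ref{refinement} then furnishes a curve in this family that avoids some $F_{n_0}$, contradicting the clopen separation of $X\setminus F_{n_0}$.

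Your argument is more direct and more self-contained: you use Lemma \ref{refinement} immediately to show that $\chi_A\circ\gamma$ is constant for $p$-almost every curve $\gamma$, so that $0$ is a $p$-weak upper gradient of $\chi_A$, and then let the Poincar\'e inequality plus connectedness of $X$ do the work. This bypasses the condenser-capacity and capacitary-Poincar\'e results that the paper imports, and the upgrade from a.e.\ to q.e.\ agreement is the same final step in both proofs. The paper's approach, on the other hand, makes the underlying geometric obstruction (a curve of positive-modulus family crossing the separation) explicit.
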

\begin{proof}
Set $A=\{ f=0 \}$. Let $F_n\supset F_{n+1}$ be a decreasing sequence of open sets in $X$ such that $\Cp_p(F_n)<2^{-n}$, $f|_{X\setminus F_n}$ is continuous and $A\setminus F_n$ is both closed and open in $X\setminus F_n$. We further denote by $F$ the intersection of all $F_n$'s.

Suppose $\Cp_p(A)>0$. Then also $\Cp_p(A\setminus F)>0$. First we will show that $\mu(X\setminus A)=0$. If $\mu(X\setminus A)>0$ then also $\mu(X\setminus (F\setminus A))>0$. Since, for given $n\in\N$ the set $A\setminus F_n$ is both closed and open in $X\setminus F_n$ the same is true of $X\setminus (A\cup F_n)=(X\setminus A)\setminus F_n$. Therefore the sets $A\setminus F_n$ and $X\setminus (A\cup F_n)$ form a separation of $X\setminus F_n$, for all $n$.

Take $x\in A\setminus F$ and $y\in X\setminus (A\cup F)$ with $\Cp_p(B(x,r)\setminus F)>0$ and $\mu(B(y,r)\setminus (A\cup F))>0$ for all $r>0$. The condition is automatic for $x$ since by \cite[Theorem 6.7 (xii)]{bjo11} $\Cp_p(B(x,r))=\Cp_p(B(x,r)\setminus F)$ and it is true for $y$ provided we choose $y$ to be a density point of $X\setminus (A\cup F)$. 

Take $0<r<d(x,y)/2$ whence $\overline B(x,r)\cap \overline B(y,r)=\varnothing$. Furthermore the sets $B_n=(A\cap \overline B(x,r))\setminus F_n$, $B_n'=\overline B(y,r)\setminus (A\cup F_n)$ are disjoint and compact. Now \cite[Theorem 7.33]{hei01} implies (for compact $X$!) \[ \Mod_p(\Gamma_n)=\Cp_p(B_n,B_n')=\inf\left\{ \int_Xg_u^p\ud\mu; u\in \Ne pX,\ u|_{B_n}\equiv 0, u|_{B_n'}\ge 1 \right\} \] where $\Gamma_n=\Gamma_{B_n,B_n'}$ \footnote{Here $\Gamma_{U,V}$ denotes the path family connecting the sets $U$ and $V$}. Choose a ball $B_0\subset X$ so that $B_n\cup B_n'\subset \overline B(x,r)\cup\overline B(y,r)\subset B_0$ and estimate, for any $u$ as in the above infimum by \cite[Theorem 5.53]{bjo11}
\begin{align*}
&\frac{\mu(B_n')}{\mu(2B_0)}\le \dashint_{2B_0}|u|^p\ud\mu\le \frac{C}{\Cp_p(B_0\cap \{ u=0\})}\dashint_{2\sigma B_0}g_u^p\ud\mu \le \frac{C/\mu(2B_0)}{\Cp_p(B_n)}\int_Xg_u^p\ud\mu
\end{align*}
from which we get, taking infimum over $u$, \[ \Mod_p(\Gamma_n)\ge 1/C\mu(B_n')\Cp_p(B_n). \] Since $\displaystyle (A\cap \overline B(x,r))\setminus F=\bigcup_n B_n,\ \overline B(y,r)\setminus (A\cup F)=\bigcup_nB_n'$ we have $$\lim_{n\to \infty} \mu(B_n')\Cp_p(B_n)= \mu(\overline B(y,r)\setminus (A\cup F))\Cp_p((A\cap \overline B(x,r))\setminus F) $$ and thus \[ \Mod_p(\Gamma_0)\ge \limsup_{n\to \infty}\Mod_p(\Gamma_n)\ge \alpha>0, \] where $$\Gamma_0=\Gamma_{\overline B(y,r)\setminus (A\cup F),(A\cap \overline B(x,r))\setminus F }.$$ Let $\Gamma_\infty=\{ \gamma: \gamma^{-1}(F_n)\neq\varnothing \forall n \}$ whence by Lemma \ref{refinement} $\Mod_p(\Gamma_\infty)=0$. From the fact that $$\Mod_p(\Gamma_0\setminus \Gamma_\infty)\ge \Mod_p(\Gamma_0)-\Mod_p(\Gamma_\infty)\ge \alpha>0$$ we conclude that there exists a curve $\gamma \in \Gamma_0\setminus \Gamma_\infty$. In other words there exists an index $n_0$ and a curve $\gamma\in \Gamma_0$ with $|\gamma|\subset X\setminus F_{n_0}$. Such a curve joins the sets $A\setminus F_{n_0}$ and $X\setminus (A\cup F_{n_0})$ in $X\setminus F_{n_0}$. This, however should be impossible since these two sets separate $X\setminus F_{n_0}$. 

\bigskip\noindent We conclude that $\mu(X\setminus A)=0$, that is, $f= 0$ almost everywhere. Since $f$ is $p$-quasicontinuous it follows \cite[Proposition 1.59]{bjo11} that $f=0$ $p$-quasieverywhere, i.e $\Cp_p(X\setminus A)=0$. The proof is now complete.
\end{proof}

\bigskip\noindent This lemma will be used to prove that the projection $\phi:H^v\to [v]_p$ is a discrete map. Namely we have 

\begin{proposition}\label{discrete}
Suppose $X$ and $Y$ are compact and let $u\in [v]_p$. Then the set \[ \phi^{-1}(u)=\{ F\in H^v: F_1=u \} \] is discrete with respect to the metric \[ \hat d(F,H):=\left(\int_Xd_{\hat Y}^p(F,H)\ud\mu\right)^{1/p}. \]
\end{proposition}
\begin{proof}
Suppose $H, F\in \phi^{-1}(u)$ are distinct and let $\sigma = \langle HF^{-1}\rangle$ be the locally geodesic $p$-quasihomotopy $u\simeq u$ in the q.e pointwise homotopy class of $HF^{-1}:u\simeq u$.

Consider the map $l_\sigma\in \Ne pX$, given by $$l_\sigma(x)=\ell(\sigma(x)).$$ Let $\varepsilon_Y$ be half the injectivity radius of $Y$, i.e. the largest number $r$ with the property that every ball $\overline B(y,2r)$, $y\in Y$, is a Busemann space. This is positive since $Y$ is compact. It follows that if $l_\sigma(x)<\varepsilon_Y$ then the loop $\sigma(x)$ is contractible, by Busemann convexity. Therefore we have \[ \{x\in X: l_\sigma(x)<\varepsilon_Y \} = \{ x\in X: l_\sigma(x)=0 \}=:U. \] Since $l_\sigma$ is $p$-quasicontinuous  it follows that $U$ is both $p$-quasiclosed and $p$-quasiopen, whence by Lemma \ref{quasidiscrete} either $\Cp_p(U)=0$ or $\Cp_p(X\setminus U)=0$. 

Note further that $$ U=\{ x\in X: d_{\hat Y}(F(x),H(x))<\varepsilon_Y \}.$$ This is because for any $q\in Y$ the inclusion map $\iota_q: (\tilde Y_q, d_q)\to (\hat Y, d_{\hat Y})$ is a local isometry with every restriction $\iota|_{B(\alpha, \varepsilon_Y)}$, $\alpha\in \tilde Y_q$, an isometry (see the discussion after the construction of $\widehat Y$, Section 3). This in turn implies $$ d_{\hat Y}(F(x),H(x))=d_{v(x)}(F(x),H(x))=l_\sigma(x)$$ whenever $l_\sigma(x)<\varepsilon_Y$ (or equivalently $d_{\hat Y}(F(x),H(x))<\varepsilon_Y$), yielding the desired identity.

\bigskip\noindent Now suppose that $\hat d(F,H):=\varepsilon <\varepsilon_Y\mu(X)^{1/p}$. Then we have \[ \mu(\{ x\in X: d_{\tilde Y}(F(x),H(x))\ge \varepsilon_Y \})\le \left(\frac{\varepsilon}{\varepsilon_Y}\right)^p <\mu(X),\] implying \[ \mu(U)= \mu(X)-\mu(\{ x\in X: d_{\tilde Y}(F(x),H(x))\ge \varepsilon_Y \})>0. \] By Lemma \ref{quasidiscrete} we therefore have $\Cp_p(X\setminus U)=0$, in other words $l_\sigma=0$ $p$-quasieverywhere which implies $\hat d(F,H)=0$. 

This, however is not possible since $F$ and $H$ are distinct and therefore we conclude that any two distinct $F,H\in \phi^{-1}(u)$ must satisfy $$\hat d(F,H)\ge \varepsilon_Y\mu(X)^{1/p}.$$

\end{proof}

\bigskip\noindent We now introduce two minor alterations to the discussion above. The first one is a change of metric; for us it is convenient to use the metric \[ \tilde d(F,H)^p=\int_Xd_{v(x)}^p(F(x),H(x))\ud\mu(x) \] on $H^v$ instead of $\hat d$. This way we ensure that $G_v$ acts on $H^v$ by isometries. Indeed for $p$-quasievery $x\in X$ we have $$d_{v(x)}(\langle F(x)\sigma(x)\rangle, \langle H(x)\sigma(x)\rangle)=d_{v(x)}(H(x),F(x)),$$ $F,H\in H^v$, $\sigma\in G_v$, since pointwise this is simply the action of $\sigma(x)\in \pi(Y,v(x))$ on $\widetilde Y_{v(x)}$ by isometry. It follows that \[ \tilde d(F\sigma,H\sigma)=\tilde d(F,H). \] From the elementary inequality \[ d_{\hat Y}(\alpha,\beta)\le d_q(\alpha,\beta)\quad \alpha,\beta\in \tilde Y_q, \] it follows that \[ \hat d\le\tilde d, \] in particular the claim of Proposition \ref{discrete} remains true if the metric $\hat d$ is replaced by $\tilde d$.

\medskip\noindent The second alteration is on the space $H^v$. We introduce a parameter $M\in (0,\infty]$ and denote by $H^v_M$ the set \[ H^v_M= \{ F\in H^v: \|g_{\phi\circ F}\|_{L^p}\le M \}. \] 

In other words we restrict our attention to maps $F\in H^v$ for which the endpoint $u=\phi\circ F$ satisfies a gradient $L^p$-norm upper bound. By Theorem \ref{di}, if $u\in \Nem pXY$, $M\ge \|g_v\|_{L^p}, \|g_u\|_{L^p}$, and $H:v\simeq u$ is a locally geodesic $p$-quasihomotopy then $\|g_{H_t}\|_{L^p}\le M$ for every $t$. Clearly the claim of Proposition \ref{discrete} remains true if, in addition to the change of metric, the space $H^v$ is replaced by $H^v_M$. We use the notation $[v]_{p,M}$ for the image set $\phi(H^v_M)\subset [v]_p$.

A little care is needed when considering $H^v_M$ as a metric space with either of the metrics $\hat d$ or $\tilde d$, since these only measure differences of maps up to sets of measure zero. A crucial observation is that if $u,v:X\to Y$ admit $p$-integrable upper gradients and $u=v$ almost everywhere, then in fact $u=v$ $p$-quasieverywhere and they may regarded as the same element in $\Di pXY$. This may be seen by applying \cite[Proposition 1.59]{bjo11} to $d(u,v)$ and $0$.

\begin{lemma}\label{proper}
The set $H^v_M$ equipped with the metric $\tilde d$ is a proper metric space.
\end{lemma}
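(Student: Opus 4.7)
The plan is as follows. Given a bounded sequence $(F_n) \subset H^v_M$ with $\tilde d(F_n, \mathbf{v}) \le R$, where $\mathbf{v} \in H^v$ denotes the trivial element sending $x$ to the constant loop at $v(x)$, I would extract a $\tilde d$-convergent subsequence by first applying Rellich--Kondrakov in the proper target $\hat Y$ to obtain $\hat d$-convergence and then upgrading to $\tilde d$-convergence using the fact that the inclusion $\iota_{v(x)}:\tilde Y_{v(x)}\to\hat Y$ is a local isometry.

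First I would collect the uniform bounds. The constraint $F_n\in H^v_M$ and the 1-Lipschitz projection $\hat Y\to Y^2\to Y$ give $\|g_{(F_n)_1}\|_{L^p}\le M$, whence Proposition \ref{target} yields $\|g_{F_n}\|_{L^p}\le \|g_v\|_{L^p}+M$ uniformly. Since $d_{\hat Y}\le d_{v(x)}$, also $\int_X d_{\hat Y}^p(F_n,\mathbf v)\,d\mu\le R^p$. Because $Y$ is compact and locally convex, $\hat Y$ is a complete proper geodesic space by Hopf--Rinow, so Theorem \ref{rellich} applied with target $\hat Y$, reference map $\mathbf v$, and $B=X$ delivers a subsequence (still denoted $(F_n)$) and a map $F\in\Nem pX{\hat Y}$ with $F_n\to F$ in $L^p(X;\hat Y)$ and $\|g_F\|_{L^p}\le\liminf\|g_{F_n}\|_{L^p}$. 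Evaluation at the endpoint $0$ forces $F_0=v$ almost everywhere and hence $p$-quasieverywhere, while Lemma \ref{le3} applied to $p\circ F_n\to p\circ F$ gives $\|g_{p\circ F}\|_{L^p}\le M$, so $F\in H^v_M$. Passing to a further subsequence, $F_n(x)\to F(x)$ in $\hat Y$ for $\mu$-a.e.\ $x$.

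The heart of the argument is to pass from $\hat d$- to $\tilde d$-convergence. Let $\varepsilon_Y>0$ be half the injectivity radius of $Y$. From the construction of $\hat Y$ and the discussion after Theorem \ref{universal}, the inclusion $\iota_{v(x)}:\tilde Y_{v(x)}\to\hat Y$ is a local isometry whose restriction to any ball of radius $\varepsilon_Y$ is an honest isometry. Setting $A_n=\{x:d_{\hat Y}(F_n(x),F(x))<\varepsilon_Y/2\}$, Chebyshev gives $\mu(X\setminus A_n)\to 0$, and on $A_n$ we have $d_{v(x)}(F_n,F)=d_{\hat Y}(F_n,F)$. Splitting
\[
\tilde d(F_n,F)^p = \int_{A_n} d_{v(x)}^p(F_n,F)\,d\mu + \int_{X\setminus A_n} d_{v(x)}^p(F_n,F)\,d\mu,
\]
the first summand is dominated by $\hat d(F_n,F)^p\to 0$. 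For the second, the identity $d_q(\alpha,\beta)=\ell(\langle\alpha\beta^{-1}\rangle)$ from the excerpt, applied with $\beta$ the constant loop, yields $d_{v(x)}(F_n(x),\mathbf v(x))=l_{F_n}(x)$ (because $F_n(x)$ is already a local geodesic in its own homotopy class), and similarly for $F$. Thus the integrand on $X\setminus A_n$ is at most $2^{p-1}(l_{F_n}^p+l_F^p)$.

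The main obstacle, and the crucial step, is uniform integrability of $\{l_{F_n}^p\}$, needed to make the integral over the vanishing set $X\setminus A_n$ tend to zero uniformly in $n$. I would obtain this from Lemma \ref{locint}: the scalar functions $l_{F_n}$ lie in $\Ne pX$ with $g_{l_{F_n}}\le g_v+g_{(F_n)_1}$ bounded in $L^p$, and $\|l_{F_n}\|_{L^p}=\tilde d(F_n,\mathbf v)\le R$, so $(l_{F_n})$ is bounded in $\Ne pX$. Because $X$ is compact, the scalar Rellich--Kondrakov theorem produces a further subsequence converging in $L^p(X)$, which implies uniform integrability of $\{l_{F_n}^p\}$. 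Combined with absolute continuity of $\int l_F^p$ over sets of vanishing measure, this delivers $\int_{X\setminus A_n}(l_{F_n}^p+l_F^p)\,d\mu\to 0$. Hence $\tilde d(F_n,F)\to 0$, establishing properness.
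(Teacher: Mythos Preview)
Your proof is correct and follows essentially the same strategy as the paper: apply Rellich--Kondrakov in $\hat Y$ to get $\hat d$-convergence and pointwise a.e.\ convergence of a subsequence, then use Lemma~\ref{locint} to bound $(l_{F_n})$ in $\Ne pX$ and apply scalar Rellich--Kondrakov to control the lengths. The only cosmetic difference is in the final step: the paper bounds $d_{v(x)}^p(F_n,F)\le 2^{p-1}(l_{F_n}^p+l_F^p)$ and invokes the General Lebesgue Dominated Convergence Theorem (with $g_n=2^{p-1}(l_{F_n}^p+l_F^p)\to 2^p l_F^p$ pointwise and in $L^1$), whereas you split over $A_n$ and $X\setminus A_n$ and argue via uniform integrability; these are equivalent, and both implicitly rely on the same local-isometry fact that $d_{v(x)}=d_{\hat Y}$ once $d_{\hat Y}(F_n(x),F(x))<\varepsilon_Y$.
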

\begin{proof}
Take a sequence $$F_n\in \tilde B(H, L):= \{ F\in H^v_M: \tilde d(H,F)\le L \} .$$ Each $F_n$ is the lift of the locally geodesic $p$-quasihomotopy $t\mapsto (F_n)_t: v\simeq \phi\circ F_n$, so using Proposition \ref{target} we may estimate
\begin{align*}
&\hat d(H,F_n)+\|g_{F_n}\|_{L^p}\le L+ \|g_v\|_{L^p}+\|g_{\phi\circ F_n}\|_{L^p}\le \|g_v\|_{L^p}+L+M
\end{align*}
for all $n$ and therefore the Rellich Kondrakov theorem \ref{rellich} implies that a subsequence denoted $F_n$ converges to some $F\in \Nem pX{\diaco Y}$ in the metric $\hat d$. By passing to a further subsequence we may assume that $F_n\to F$ pointwise almost everywhere. (In particular $d_{v(x)}(F_n(x),F(x))\to 0$ as $n\to\infty$ for almost every $x\in X$.)

From the fact that $d_q(\alpha,\beta)= \ell(\langle\beta\alpha^{-1}\rangle)$ for paths $\alpha,\beta \in \widetilde Y_q$ (see discussion before Definition \ref{lift}) we observe that $l_{F_n}(x)=d_{v(x)}(F_n(x),\hat v)$, where $\hat v$ denotes the lift of the trivial $p$-quasihomotopy $v\simeq v$. Using this and Lemma \ref{locint} we may estimate
\begin{align*}
\|l_{F_n}\|_{L^p}+\|g_{l_{F_n}}\|_{L^p}&= \tilde d(F_n,\hat v)+\|g_{l_{F_n}}\|_{L^p}\\
&\le \tilde d(\hat v, H)+\tilde d(H, F_n)+\|g_{p\circ F_n}\|_{L^p}+\|g_v\|_{L^p}\\
&\le \tilde d(\hat v, H)+L+M+\|g_v\|_{L^p}
\end{align*}
for all $n$, so for a still further subsequence the function $l_{F_n}$ converges to some $f\in \Ne pX$ in $L^p$-norm and pointwise almost everywhere. We shall use the following General Lebesgue Dominated Convergence Theorem.
\begin{lemma}
Let $f_n$ be a sequence of measurable functions on a measure space $(\Omega, \nu)$ that converges $\nu$-almost everywhere to $f$. Suppose there is a sequence $g_n$ of $\nu$-integrable functions that converge pointwise $\nu$-almost everywhere to a $\nu$-integrable function $g$, such that $|f_n|\le g_n$ for each $n$, and $$\lim_{n\to \infty}\int_\Omega g_n\ud\nu=\int_\Omega g\ud\nu.$$ Then $$\lim_{n\to\infty}\int_\Omega f_n\ud\nu = \int_\Omega f\ud\nu.$$
\end{lemma} 

\noindent By the inequality \[ d_v^p(F_n,F)\le 2^{p-1}d_v^p(F_n,\hat v)+2^{p-1}d_v^p(\hat v,F)=2^{p-1}l_{F_n}^p+2^{p-1}l_F^p \] we may take $g_n=2^{p-1}l_{F_n}^p+2^{p-1}l_F^p$ and $g=2^{p}l_F^p$ and use the above theorem to conclude
\[ \lim_{n\to\infty}\int_Xd_v^p(F_n,F)\ud\mu=\int_X\lim_{n\to\infty} d_v^p(F_n,F)\ud\mu=0. \] Having established $\tilde d(F_n,F)\to 0$ as $n\to\infty$ it is evident that $F\in \tilde B(H,L)$ and therefore we have shown the compactness of $\tilde B(H,L)$.
\end{proof}

\noindent The next lemma expresses some nice properties of the action of the group $G_v$ on $H^v_M$ ($M\ge \|g_v\|_p$).

\begin{proposition}\label{proper}
The action of $G_v$ on $H^v_M$ is proper and free. Moreover, if $F\in H^v_M$ and $u=\phi\circ F\in [v]_{p,M}$ then $F.G_v=\phi_M^{-1}(u)$. Here $\phi_M=\phi|_{H^v_M}$.
\end{proposition}
\begin{proof}
Let us first show that the action is free. If $F\sigma=H\sigma$ then for $p$-quasievery $x\in X$ one has $\langle F(x)\sigma(x)\rangle = \langle H(x)\sigma(x)\rangle$. Since the action of $\pi_1(Y,v(x))$ on $\tilde Y_{v(x)}$ is free this implies that $\sigma(x)$ is the neutral element of $\pi_1(Y,v(x))$, i.e the constant path $v(x)$. Since $\sigma(x)$  is the path $t\mapsto v(x)$ for $p$-quasievery $x\in X$ we have that $\sigma$ is the trivial $p$-quasihomotopy $v\simeq v$, i.e. $\sigma_t=v$ for all $t\in [0,1]$.

Now suppose $H\in B(F,\varepsilon)\cap B(F\sigma,\varepsilon)$. Then $\tilde d(F,F\sigma)\le 2\varepsilon$. By Remark \ref{deck} $\phi\circ F=\phi\circ (F\sigma)$, and thus Proposition \ref{discrete} and its proof implies that if $2\varepsilon <\varepsilon_Y\mu(X)^{1/p}=:\varepsilon_0$ then $F=F\sigma$, i.e $\sigma$ is the trivial $p$-quasihomotopy $v\simeq v$, the neutral element of the group $G_v$. This shows that for $\varepsilon<\varepsilon_0/2$ the collection of $\sigma\in G_v$ for which $B(F,\varepsilon)\cap B(F\sigma,\varepsilon)\ne \varnothing$ consists only of the neutral element.

Finally let $F\in H^v_M$ and $u=\phi\circ F\in [v]_{p,M}$. Obviously $F.G_v\subset \phi_M^{-1}(u)$ since for all $\sigma\in G_v$ it holds that $\phi\circ F\sigma=p\circ F$. But if $H\in \phi_M^{-1}(u)$, let $\sigma =\langle F^{-1}H\rangle\in G_v$ and calculate $F\sigma=\langle FF^{-1}H\rangle =H$ so that $H\in F.G_v$.
\end{proof}



\section{A weak compactness result and further discussion} Unfortunately I have been unable to prove that the (restricted) $p$-quasihomotopy class $[v]_{p,M}$ is compact with respect to the $L^p$-metric $\tilde d$.

To look for weaker results we shall utilize the metric properties of the spaces $(H^v_M, \tilde d)$ and $(H^v_M,\hat d)$. \emph{In this section we assume that $X$ and $Y$ are both compact.} In particular $\diaco Y$ is then proper.

An immediate corollary of Lemma \ref{proper} is the following weak compactness result.

\begin{corollary}\label{weak}
Suppose $v\in \Nem pXY$ and $u_n$ is a sequence in $[v]_p$ with \[ \sup_n\|g_{u_n}\|_{L^p}<\infty \] converging to $u$ in $L^p(X;Y)$. If the maps $u_n$ can be connected to $v$ by $p$-quasihomotopies $H_n:v\simeq u_n$ satisfying 
\begin{equation}\label{extra}
\sup_n \int_X l_{H_n}\ud\mu <\infty,
\end{equation}
then $u\in [v]_p$.
\end{corollary}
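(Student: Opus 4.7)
The plan is to lift each $p$-quasihomotopy $H_n$ (passing, via Theorem \ref{geodhom}, to the locally geodesic representative $\langle H_n\rangle$ if necessary) to a map $\hat H_n\in \Di pX{\hat Y}$ using Proposition \ref{target}, extract a subsequential limit $\hat H$ via Rellich-Kondrakov, and then invoke Proposition \ref{converse} to repackage $\hat H$ as a locally geodesic $p$-quasihomotopy $v\simeq u$, thereby placing $u$ in $[v]_p$. Compactness of $Y$ makes $\hat Y$ proper, so Theorem \ref{rellich} applies with $\hat Y$ as target.

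By Proposition \ref{target} one has $g_{\hat H_n}\le g_v+g_{u_n}$, and so $\sup_n\|g_{\hat H_n}\|_{L^p}<\infty$ by the gradient hypothesis. To feed this into Theorem \ref{rellich} with $B=X$ I additionally need a uniform $L^p$-bound on $d_{\hat Y}(\hat v,\hat H_n)$, where $\hat v$ denotes the lift of the trivial $p$-quasihomotopy $v\simeq v$. The inequality $d_{\hat Y}(\hat v(x),\hat H_n(x))\le d_{v(x)}(\hat v(x),\hat H_n(x))=l_{H_n}(x)$ from the discussion preceding Proposition \ref{proper} reduces this task to a uniform $L^p$-bound on $l_{H_n}$. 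The hypothesis supplies the $L^1$-bound, and Lemma \ref{locint} gives $g_{l_{H_n}}\le g_v+g_{u_n}$, which is uniformly $L^p$-bounded; a global Poincar\'e-type estimate on compact $X$ then combines these two bounds into the required $L^p$-control of $l_{H_n}$.

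With the hypotheses of Theorem \ref{rellich} verified, I extract a subsequence (not relabeled) with $\hat H_n\to \hat H$ in $L^p(X;\hat Y)$ for some $\hat H\in \Nem pX{\hat Y}$. The $1$-Lipschitz projection $p\colon\hat Y\to Y^2$ sends $\hat H_n$ to $(v,u_n)$, which converges in $L^p$ to $(v,u)$ by hypothesis, while simultaneously $p\circ\hat H_n\to p\circ \hat H$ in $L^p$; hence $p\circ \hat H=(v,u)$ almost everywhere, and in fact $p$-quasieverywhere since both sides have $p$-integrable upper gradients. Proposition \ref{converse} then packages $\hat H$ into a locally geodesic $p$-quasihomotopy $v\simeq u$, placing $u\in [v]_p$. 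The step I view as most delicate is the Poincar\'e-based upgrade of the $L^1$-bound on $l_{H_n}$ to an $L^p$-bound -- this is exactly where the extra hypothesis \eqref{extra} enters essentially; without it the lifts $\hat H_n$ could drift to infinity in $\hat Y$ and the compactness argument would collapse.
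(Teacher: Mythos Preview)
Your proof is correct and follows essentially the same route as the paper: both arguments lift the $H_n$ to $\hat H_n\in \Nem pX{\hat Y}$, use Lemma \ref{locint} together with the Poincar\'e inequality on compact $X$ to convert the $L^1$-bound \eqref{extra} on $l_{H_n}$ into an $L^p$-bound, apply Rellich--Kondrakov to extract a limit $\hat H$, and then project via $p$ to identify $p\circ\hat H=(v,u)$. The only cosmetic difference is that the paper packages the compactness step as properness of $(H^v_M,\tilde d)$ (the lemma immediately preceding the corollary) and works with the metric $\tilde d$ coming from $d_{v(x)}$, whereas you apply Theorem \ref{rellich} directly and stay with the $\hat d$-metric; since the conclusion only needs $u\in[v]_p$ and not $\tilde d$-convergence, your route is marginally more economical.
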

\begin{proof}
Let $\displaystyle M_0= \sup_n\|g_{u_n}\|_{L^p}$ and $\displaystyle M_1= \sup_n \int_X l_{H_n}\ud\mu$. Using the Poincar\'e inequality we estimate 
\begin{align*}
\tilde d (\hat v, \hat H_n)&= \left(\int_Xl_{H_n}^p\ud\mu\right)^{1/p}\le \dashint_Xl_{H_n}\ud\mu+ C\diam(X)\left(\int_X g_{l_{H_n}}^p\ud\mu\right)^{1/p} \\
&\le \mu(X)^{-1}M_1+C\diam(X)(M_0+\|g_v\|_{L^p})
\end{align*}
(We use the notation $\hat v$ for the lift of the trivial $p$-quasihomotopy $v\simeq v$ again.) Therefore $H_n\in H^v_{M_0}\cap \tilde B(\hat v, L)$, where $L=\mu(X)^{1/p-1}M_1+C\diam(X)M_0$. By the previous lemma a subsequence $H_n$ converges to some $H\in H^v_M$ in the metric $\tilde d$.

Furthermore we have \[ \int_Xd^p_Y(\phi\circ H,u)\ud\mu=\lim_{n\to\infty}\int_Xd^p_Y(\phi\circ H,p\circ  H_n)\ud\mu\le \lim_{n\to\infty}\tilde d (H, H_n)=0\] so that $u=\phi\circ H$. Therefore $u\in [v]_p$.
\end{proof}

\noindent This is an unsatisfactory result because of the extra assumption (\ref{extra}) of having to control the lengths of the homotopies $H_n$. The result is basically a restatement of the fact that, given $M>0$ the space $H^v_M$ equipped with metric $\tilde d$ is a proper (which in turn followed easily from the Rellich Kondrakov compactness theorem \ref{rellich}).

Removing the extra assumption (\ref{extra}) on the homotopies $H_n$ in Corollary \ref{weak} amounts to ensuring that the space $H^v_M/G_v$, arising from the action of $G_v$ on $H^v_M$ in the previous subsection, equipped with the metric $$\overline d(F.G_v,H.G_v):=\dist_{\tilde d}(F.G_v,H.G_v)$$ \emph{has finite diameter}; notice that the action of $G_v$ on $H^v_M$ gives rise to a covering map $$\pi:H^v_M \to H^v_M/G_v$$ and the metric $\overline d$ makes $\pi$ into a local isometry, see \cite[Chapter I.8, Proposition 8.5(3)]{bri99}. 

To see the claim about the finite diameter take two elements $F.G_v, H.G_v\in H^v_M/G_v$ and let $u=\phi\circ F, w=\phi\circ H$. Note that \[ \overline d(F.G_v, H.G_v) = \inf_{\sigma\in G_v} \tilde d(F, \langle H\sigma\rangle)=\inf_{\sigma\in G_v}\left(\int_X l_{\langle H\sigma F^{-1}\rangle}^p\ud\mu\right)^{1/p}. \] The rightmost infimum is equal to the infimum over all locally geodesic $p$-quasihomotopies $H:u\simeq w$ of the quantity $$\left(\int_X l_{H}^p\ud\mu\right)^{1/p},$$ since for each $\sigma \in G_v$, $\langle H\sigma F^{-1}\rangle: u\simeq w$ is a locally geodesic $p$-quasihomotopy. Conversely, given any locally geodesic $p$-quasihomotopy $H': u\simeq w$ we may write it as $H'=\langle H \langle H^{-1}H'F\rangle F^{-1}\rangle$, where $\langle H^{-1}H'F\rangle\in G_v$.

We obtain 
\begin{equation}\label{innermetric}
\overline d(\phi^{-1}(u),\phi^{-1}(w))=\inf_{H:u\simeq w}\left(\int_Xl_H^p\ud\mu\right)^{1/p}.
\end{equation}

With this in hand it is easy to see that if $H^v_M/G_v$ has finite diameter then (\ref{extra}) is automatically satisfied.

On the other hand, requiring that for every sequence $u_n\in [v]_M$ condition (\ref{extra}), rewritten \[ \sup_n\inf_{H:v\simeq u_n}\int_X l_H^p\ud\mu <\infty, \] is satisfied, is equivalent to requiring that there is some $C<\infty$ so that \[\sup_{u\in [v]_M}\inf_{H:v\simeq u}\int_X l_H^p\ud\mu\le C\] (if such a constant did not exist we would have a sequence $u_n$ contradicting the condition). Thus we see that (\ref{extra}) is automatically satisfied if and only if $H^v_M/G_v$ has finite diameter.

Observe that since $(H^v_M,\tilde d)$ is proper the same is true of $H^v_M/G_v$ and therefore it has finite diameter if and only if it is compact.

\vspace{1cm}\noindent What, then, can we say about the quotient space $H^v_M/G_v$? 

We may define a map $\overline \phi_M: H^v_M/G_v\to [v]_{p,M}$ by \[\overline \phi_M(F.G_v)=\phi\circ F.\] This is well-defined by Remark \ref{deck}. By the last assertion in Proposition \ref{proper} we see that $\overline \phi_M$ is bijective.

With the continuous bijection $\overline \phi_M: H^v_M/G_v\to [v]_{p,M}$ at hand it is immediate that compactness of $[v]_{p,M}$ is implied by the compactness of $H^v_M/G_v$; indeed assuming this, the map $\overline \phi_M$ is a homeomorphism (by elementary topological considerations). In this event, furthermore, we see that $\phi_M=\overline \phi_M\circ \pi: H^v_M\to [v]_{p,M}$ is a covering map. (Conversely, assuming that $\phi_M$ is a covering map we have that $\overline \phi_M$ is also a covering map, and therefore a homeomorphism.)

\bigskip\noindent Another way of interpreting the identity (\ref{innermetric}) is to identify $H^v_M/G_v$ with $[v]_{p,M}$ (through the map $\overline \phi_M$) and $\overline d$ with a metric induced by a certain \emph{length structure} on $[v]_{p,M}$ (see \cite[Definition 1.3, p.3]{gro07}). Indeed, the length structure is given by the family of paths $H$ that are (locally geodesic) $p$-quasihomotopies between the endpoint maps, and the length functional is simply $$ \ell(H)=\left(\int_X l_H^p\ud\mu\right)^{1/p}.$$

This point of view emphasizes the (geo)metric structure of $[v]_{p,M}$, or $\Nem pXY$ and in particular the question of compactness of $[v]_{p,M}$ is reduced to asking does the length structure $\overline d$ give rise to the same topology on $[v]_{p,M}$ as does the original $L^p$-metric $$d(u,w)=\left(\int_Xd_Y^p(u,w)\ud\mu\right)^{1/p}.$$ This question remains open, along with the question of existence of energy minimizing maps in $p$-quasihomotopy classes, encouraging the study of geometry of the Newtonian spaces $\Nem pXY$.

\bigskip\subsubsection*{Acknowledgements} I would like to thank Nageswari Shanmugalingam and Tomasz Adamowicz for many helpful comments and suggestions for a clearer exposition. I would also like to thank my advisor Ilkka Holopainen for financial support.

\bibliographystyle{plain}
\bibliography{abib}

\end{document}